\numberwithin{equation}{subsection}
\theoremstyle{definition}
\newtheorem{theorem}{Theorem}[subsection]									
\newtheorem{lemma}[theorem]{Lemma}      	 	
\newtheorem*{lemma*}{Lemma}           	
\newtheorem{proposition}[theorem]{Proposition}		
\newtheorem*{proposition*}{Proposition}    	
\newtheorem*{theorem*}{Theorem}  
\newtheorem*{defninition*}{Definiton}
\newtheorem{remark}[theorem]{Remark}  
\newtheorem*{remark*}{Remark}       
\title[Arithmeticity Ko-Zo monodromies II]{Arithmeticity of the Kontsevich--Zorich monodromies of certain families of square-tiled surfaces II}
\begin{document}

\bibliographystyle{plainnat}

\author{Manuel Kany}
\author{Carlos Matheus}

\address{Manuel, Kany, Department of Mathematics and Computer Science, Saarland University, 66123 Saarbr\"ucken, Germany}
\email{kany@math.uni-sb.de}

\address{Carlos, Matheus, Centre de Math\'ematiques Laurent Schwartz, CNRS (UMR 7640), \'Ecole Polytechnique, 91128 Palaiseau, France}
\email{carlos.matheus@math.cnrs.fr}

\maketitle

\begin{abstract} In this note, we extend the scope of our previous work joint with Bonnafoux, Kattler, Ni\~no, Sedano-Mendoza, Valdez and Weitze-Schmith\"usen by showing the arithmeticity of the Kontsevich--Zorich monodromies of infinite families of square-tiled surfaces of genera four, five and six.  
\end{abstract}

\tableofcontents

\section{Introduction}

Square-tiled surfaces (or origamis) are translation surfaces obtained from finite branched covers of the flat torus $\mathbb{R}^2/\mathbb{Z}^2$ which are unramified away from the origin. The $SL(2,\mathbb{R})$-orbits of square-tiled surfaces are closed subsets of the moduli spaces of translation surfaces called arithmetic Teichm\"uller curves. The first cohomology groups of the Riemann surfaces of genus $g\geq 2$ in an arithmetic Teichm\"uller curve form a variation of Hodge structures whose monodromy group is naturally isomorphic to a subgroup of $SL(2,\mathbb{Z})\times Sp(2g-2,\mathbb{Z})$. The projection of the monodromy group in the second factor of $SL(2,\mathbb{Z})\times Sp(2g-2,\mathbb{Z})$ is called the Kontsevich--Zorich monodromy of the square-tiled surface and, partly inspired by a question of Sarnak, one can try to decide if the Kontsevich--Zorich monodromy of a ``typical'' square-tiled surface is arithmetic or thin.\footnote{As it was remarked by Filip, the Kontsevich--Zorich monodromy of a ``typical'' square-tiled surface of genus $g$ is Zariski dense in $Sp(2g-2,\mathbb{R})$ (cf. Theorem 5.4.7 in Filip's survey  \cite{filip22}). In particular, a ``typical'' square-tiled surface of genus $g$ has arithmetic Kontsevich--Zorich monodromy when this group has finite index in $Sp(2g-2,\mathbb{Z})$.} 

M\"oller noticed that any square-tiled surface of genus two has an arithmetic Kontsevich--Zorich monodromy (cf. Appendix B of \cite{bonnafoux22}). Moreover, in our previous work \cite{bonnafoux22} joint with Bonnafoux, Kattler, Ni\~no, Sedano-Mendoza, Valdez and Weitze-Schmith\"usen, we showed that many square-tiled surfaces in the minimal stratum of the moduli space of translation surfaces of genus three have an arithmetic Kontsevich--Zorich monodromy. 

In this paper, we extend these results to certain families of square-tiled surfaces of genera four, five and six. We hereby use methods which are very similar to the methods used for genus three origamis in \cite{bonnafoux22} but we have to enrich our toolbox by more advanced concepts from Galois theory to find Galois pinching elements in the Kontsevich--Zorich monodromy for growing genera.

\begin{theorem}\label{t.A} Let $\mathcal{H}(2g-2)$ be the minimal stratum of the moduli space of translation surfaces of genus $g$. For each $g\in\{4,5,6\}$, there are infinitely many square-tiled surfaces in $\mathcal{H}(2g-2)$ whose Kontsevich--Zorich monodromies are arithmetic. 
\end{theorem}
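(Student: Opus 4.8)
\emph{Strategy of proof.} The plan is to follow the method of our previous paper \cite{bonnafoux22}, applied to explicit infinite families of square-tiled surfaces adapted to each genus $g\in\{4,5,6\}$. Recall that for a square-tiled surface $X\in\mathcal{H}(2g-2)$ one has an $SL(2,\mathbb{Z})$-equivariant splitting $H^1(X,\mathbb{Q})=H^1_{\mathrm{tor}}(X,\mathbb{Q})\oplus H^1_{(0)}(X,\mathbb{Q})$ into the pull-back from the torus (of dimension $2$) and a symplectic complement of dimension $2g-2$ on which the Kontsevich--Zorich monodromy $\Gamma(X)$ acts, so that $\Gamma(X)\leq Sp(2g-2,\mathbb{Z})$ for a suitable integral structure $H^1_{(0)}(X,\mathbb{Z})$. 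The affine group of $X$ contains a parabolic element for each parabolic fixed direction of its Veech group; in particular it contains suitable powers of the horizontal and of the vertical multitwist, whose actions on $H^1_{(0)}(X,\mathbb{Z})$ are products of pairwise commuting powers of symplectic transvections $t_{c}\colon x\mapsto x+\langle x,c\rangle c$ along the classes $c$ of the core curves of the horizontal, respectively vertical, cylinders. Conjugating these by the remaining elements of the Veech group (which is of finite index in $SL(2,\mathbb{Z})$) produces a large supply of transvections inside $\Gamma(X)$. The whole problem thus becomes that of producing, for each $g$, a family $\{X_n^{(g)}\}_{n\geq 1}$ of pairwise non-isomorphic surfaces in $\mathcal{H}(2g-2)$ whose cylinder decompositions and core-curve homology classes are explicit enough to control these transvections.

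First I would construct the families. For each $g$ I would start from a fixed small ``model'' origami with the right zero structure and insert $n$ additional unit squares in a periodic strip (a staircase-type pattern), arranged so that: (i) the distinguished vertex of $X_n^{(g)}$ has cone angle $2\pi(2g-1)$, i.e.\ $X_n^{(g)}\in\mathcal{H}(2g-2)$, for every $n$ --- a finite check on the combinatorics of the gluings; (ii) the number of squares tends to infinity with $n$, so that the $X_n^{(g)}$ are pairwise non-homeomorphic and Theorem~\ref{t.A} follows once arithmeticity is established for all (or all sufficiently large) $n$; and (iii) the horizontal and vertical cylinder decompositions, together with the classes $c_1,\dots,c_k\in H^1_{(0)}(X_n^{(g)},\mathbb{Z})$ of their cores and the intersection numbers $\langle c_i,c_j\rangle$, depend on $n$ in an eventually periodic way, so that all subsequent verifications reduce to a base case plus an induction on $n$.

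Next I would run the homological argument. With the $c_i$ and the matrix $(\langle c_i,c_j\rangle)$ in hand, I would check that (a) the $c_i$ span the lattice $H^1_{(0)}(X_n^{(g)},\mathbb{Z})$ over $\mathbb{Z}$ and (b) the intersection graph on $\{c_1,\dots,c_k\}$ (an edge whenever $\langle c_i,c_j\rangle\neq 0$) is connected and not of a degenerate type; standard results on groups generated by symplectic transvections then give that $\Gamma(X_n^{(g)})$ is Zariski dense in $Sp(2g-2,\mathbb{R})$. Finally, I would upgrade Zariski density to finite index by invoking the arithmeticity criterion used in \cite{bonnafoux22} --- which ultimately rests on results on arithmetic groups generated by unipotent elements --- and verifying its hypotheses for our configurations, for instance by exhibiting inside the $\Gamma(X_n^{(g)})$-orbit of the $c_i$ a sub-configuration of transvections already generating a finite-index subgroup of $Sp(2g-2,\mathbb{Z})$.

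The main obstacle is this last step together with its uniformity in $n$. Zariski density is ``soft'', but Zariski-dense \emph{thin} subgroups of $Sp(2g-2,\mathbb{Z})$ do exist, so the passage to arithmeticity genuinely requires checking the hypotheses of the criterion, and checking them \emph{uniformly in $n$} is exactly where the eventually periodic structure built into the families has to be exploited, through an induction propagating both the spanning property and the extra arithmeticity hypothesis from $X_n^{(g)}$ to $X_{n+1}^{(g)}$. A secondary difficulty is that the construction in the first step must be arranged so that no hidden automorphism of $X_n^{(g)}$ constrains $\Gamma(X_n^{(g)})$ to a proper algebraic subgroup of $Sp(2g-2)$, and making this work simultaneously for $g=4,5,6$ may well require somewhat different combinatorial models for the three genera.
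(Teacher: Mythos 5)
There is a genuine gap, and it sits exactly where you locate the ``main obstacle''. Your plan rests on the assumption that the Kontsevich--Zorich monodromy $\Gamma(X_n^{(g)})$ contains honest symplectic transvections $t_{c}$ along the core curves of the horizontal and vertical cylinders, so that ``standard results on groups generated by symplectic transvections'' (spanning lattice plus connected intersection graph) yield Zariski density, and a sub-configuration of such transvections yields finite index. But the affine group only supplies the \emph{multitwists}: a parabolic element acts on $H_1^{(0)}$ by $v\mapsto v+\sum_i k_i\,\Omega(c_i,v)\,c_i$ with prescribed positive integers $k_i$ (determined by the moduli of the cylinders), and the individual factors $t_{c_i}$ are in general \emph{not} elements of $\Gamma$. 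Groups generated by powers of transvections (or by such multitwists) can perfectly well be thin, so neither the Janssen/Wajnryb-type spanning-and-connectedness criteria nor your final ``sub-configuration of transvections generating a finite-index subgroup'' step is available; this is precisely why a softer criterion is needed, and your proposal does not supply one.

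The paper's route is different at both stages. Zariski density is obtained by exhibiting a Galois-pinching element, namely the product $A=M_h^{(0)}M_v^{(0)}$ of the horizontal and vertical multitwist actions, together with a unipotent $B$ with $(B-\mathrm{Id})$ of rank one (hence non-Lagrangian image), and then invoking Prasad--Rapinchuk and Matheus--M\"oller--Yoccoz. Verifying the Galois-pinching property is the real work: one computes the reciprocal characteristic polynomial explicitly in the staircase parameter $m$, proves irreducibility by reduction modulo suitable primes, controls the Galois group inside the hyperoctahedral group $\mathbb{Z}_2^k\rtimes S_k$ (Jackson's classification of subgroups surjecting onto $S_k$, cubic and Weber sextic resolvents, Dedekind's theorem for cycle types), and uses Siegel's theorem on integral points to show the relevant discriminant-type expressions are non-squares for all but finitely many $m$ in fixed arithmetic progressions --- so the conclusion is not uniform in $n$ but holds along congruence classes with finitely many exceptions. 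Arithmeticity is then obtained from the Singh--Venkataramana criterion: three rank-one unipotents coming from twists in directions like $(1,\pm2)$, $(1,4)$ span a non-isotropic three-dimensional subspace $W$, and the parameters are tuned (e.g.\ $N=3m+4$, $N=1+4m$, $N=3+2m$ in genera $4,5,6$) precisely so that the restricted group contains a nontrivial element of the unipotent radical of $Sp(W)$. Without replacing your transvection-generation argument by some such mechanism, the passage from the explicit families to Zariski density and then to finite index does not go through.
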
 

\begin{remark} As it turns out, we shall deduce this statement from the more precise results in Theorems \ref{t.A.g4}, \ref{t.A.g5} and \ref{t.A.g6} below. 
\end{remark} 

The remainder of this note concerns the proof of Theorem \ref{t.A}. More concretely, we organize this note as follows: in Section \ref{s.prelim}, we recall some Zariski-denseness and arithmeticity criteria used in \cite{bonnafoux22} together with a discussion of the Galois groups of reciprocal polynomials based on a result of Jackson \cite{jackson04} about the subgroups of the hyperoctahedral groups; afterwards, we complete the paper by showing the validity of Theorem \ref{t.A} in $\mathcal{H}(6)$, $\mathcal{H}(8)$ and $\mathcal{H}(10)$ (resp.) in Sections \ref{s.g4}, \ref{s.g5} and \ref{s.g6} (resp.). 

\begin{remark} In what follows, we shall assume some familiarity with the basic theory of square-tiled surfaces explained in \cite[\S 2]{bonnafoux22} (for instance).  
\end{remark} 

We close this short introduction with the following question: is it true that the Kontsevich--Zorich monodromy of a typical\footnote{In the sense of \cite{filip22}.} square-tiled surface is arithmetic? 

\textbf{Acknowledgments.}
The authors want to thank the referees for their comments and suggestions which vastly improved this article.

Furthermore Manuel Kany wants to thank the Deutsche Forschungsgemeinschaft (DFG, German Research Foundation) for their funding in Project-ID 286237555 – TRR 195.

\section{Preliminaries}\label{s.prelim}

\subsection{Square-tiled surfaces and their~Kontsevich--Zorich~monodromies}\label{ss.general-strategy} 

Recall that a square-tiled surface or origami is a translation surface $\mathcal{O}=(M,\omega)$, where the Riemann surface $M$ comes from a finite branched cover $\pi:M\to\mathbb{R}^2/\mathbb{Z}^2$ which is unramified away from $0\in\mathbb{R}^2/\mathbb{Z}^2$, and the Abelian differential $\omega$ is $\pi^*(dx+idy)$. The group $\textrm{Aff}(\mathcal{O})$ of affine homeomorphisms of an origami $\mathcal{O}$ respects the natural splitting 
\begin{align}\label{Eq:SplittingHomol}
  H_1(M,\mathbb{Q}) = H_1^{st}(\mathcal{O},\mathbb{Q})\oplus H_1^{(0)}(\mathcal{O},\mathbb{Q}),
\end{align}
where 
  \[
  H_1^{(0)}(\mathcal{O},\mathbb{Q})=\left\{\gamma\in H_1(M,\mathbb{Q}): \int_{\gamma}\omega = 0\right\}
  \]
and $H_1^{st}(\mathcal{O},\mathbb{Q})$ is the symplectic orthogonal of $H_1^{(0)}(\mathcal{O},\mathbb{Q})$ with respect to the usual intersection form on $H_1(M,\mathbb{Q})$. The decomposition in (\ref{Eq:SplittingHomol}) is for origamis defined over $\mathbb{Z}$. In this setting, the Kontsevich--Zorich monodromy of an origami $\mathcal{O}$ is the subgroup $\Gamma$ of $Sp(H_1^{(0)}(\mathcal{O},\mathbb{Z}))\simeq Sp(2g-2,\mathbb{Z})$ generated by the actions of the elements of $\textrm{Aff}(\mathcal{O})$ on $H_1^{(0)}(\mathcal{O},\mathbb{Z})$. 

The Zariski-denseness of the Kontsevich--Zorich monodromy $\Gamma$ of an origami $\mathcal{O}$ of genus $g$ can be checked in many contexts using the actions on homology of Dehn twists associated to cylinder decompositions in rational directions. More precisely, suppose that: 
\begin{itemize}
\item[(a)] 
we can combine such Dehn twists to produce a Galois pinching element $A\in Sp(H_1^{(0)}(\mathcal{O},\mathbb{Z}))$, i.e., a symplectic matrix whose characteristic polynomial is irreducible over $\mathbb{Z}$, splits over $\mathbb{R}$, and possesses the largest possible Galois group among reciprocal polynomials of degree $2g-2$, the hyperoctahedral group; 
\item[(b)] 
some Dehn twist induces a non-trivial unipotent element 
  \[
  B\in Sp(H_1^{(0)}(\mathcal{O},\mathbb{Z}))
  \]
such that $(B-\textrm{Id})(H_1^{(0)}(\mathcal{O},\mathbb{R}))$ is not a Lagrangian subspace of $H_1^{(0)}(\mathcal{O},\mathbb{R})$.  
\end{itemize} 
Then, $\Gamma$ is Zariski-dense in $Sp(H_1^{(0)}(\mathcal{O},\mathbb{R}))$ thanks to \cite[Theorem 9.10]{prasad14} and \cite[Proposition 4.3]{matheus15} . 

Furthermore, if $\mathcal{O}$ is an origami whose Kontsevich--Zorich monodromy $\Gamma$ is Zariski-dense in $Sp(H_1^{(0)}(\mathcal{O},\mathbb{R}))$, then Singh--Venkataramana \cite{singh14} showed that $\Gamma$ is arithmetic (i.e., it has finite index in $Sp(H_1^{(0)}(\mathcal{O},\mathbb{Z}))$) provided that it contains three unipotent matrices $T_n$, $n=1,2,3$, with 
  \[
  (T_n-\textrm{Id})(H_1^{(0)}(\mathcal{O},\mathbb{Z}))=\mathbb{Z}\,w_n,
  \]
such that $W=\mathbb{Q}w_1\oplus \mathbb{Q}w_2\oplus \mathbb{Q}w_3$ is not an isotropic subspace and the group $\langle T_n|_W:n=1,2,3\rangle$ contains a non-trivial element of the unipotent radical of $Sp(W)$. 

The Zariski-denseness and arithmeticity criteria above were used in \cite{bonnafoux22} to establish the abundance of arithmetic Kontsevich--Zorich monodromies among origamis in $\mathcal{H}(4)$. In order to extend this kind of result to minimal strata $\mathcal{H}(6)$, $\mathcal{H}(8)$ and $\mathcal{H}(10)$, we shall need the Galois-theoretical facts described in the next two subsections. 

\subsection{Galois groups as permutation group}\label{SubSec:GalGasPG}
Consider a monic irreducible  polynomial $P(X)\in\mathbb{Z}[X]$ of degree $n$ with set of complex roots $S=\{\lambda_1,\dots,\lambda_n\}$. Let $Z(P)=\mathbb{Q}(\lambda_1,\dots,\lambda_n)$ be the splitting field of the polynomial $P(X)\in\mathbb{Z}[X]$. We consider the standard embedding of $\text{Gal}(P)=\text{Aut}_\mathbb{Q}(Z(P))$ in the permutation group $\text{Sym}(S)$ via
  \[
  \text{Gal}(P)\longrightarrow \text{Sym}(S),\quad \sigma\longmapsto \sigma|_S.
  \] 
The theorem of Dedekind is a useful tool to study the Galois group of a polynomial $P(X)\in\mathbb{Z}[X]$ as above:

\begin{theorem}[Dedekind]
Let $P(X)\in\mathbb{Z}[X]$ be monic irreducible of degree $n$. For every prime number $p$ not dividing the discriminant of $P(X)\in\mathbb{Z}[X]$, let the monic irreducible factorization of $P(X)\in\mathbb{Z}[X]$ modulo $p$ be
  \[
  P(X) \equiv \pi_1(X)\cdots\pi_k(X) ~\text{mod}~p
  \]
with $\pi_i(X)$ pairwise distinct and set $d_i:=\deg{\pi_i(X)}$, so $d_1+\dots+d_k=n$. Then the Galois group $\text{Gal}(P)$ of $P(X)\in\mathbb{Z}[X]$ viewed as a subgroup of $\text{Sym}(S)$ contains an element that permutes the roots $S$ of $P(X)$ with cycle type $(d_1,\dots,d_k)$.
\end{theorem}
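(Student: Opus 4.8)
The plan is to exhibit the required permutation of $\text{Gal}(P)$ as a Frobenius automorphism attached to a prime of the splitting field lying over $p$. Write $K=Z(P)=\mathbb{Q}(\lambda_1,\dots,\lambda_n)$; since $K$ is a splitting field over a field of characteristic zero it is Galois over $\mathbb{Q}$ with $\text{Gal}(K/\mathbb{Q})=\text{Gal}(P)=:G$. Let $R=\mathbb{Z}[\lambda_1,\dots,\lambda_n]\subseteq K$; this is a subring, finitely generated as a $\mathbb{Z}$-module, and it is stable under $G$ because every $\sigma\in G$ fixes $\mathbb{Z}$ and permutes $\{\lambda_1,\dots,\lambda_n\}$. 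A determinant-trick (Nakayama-type) argument shows $R/pR\neq 0$, so there is a maximal ideal $\mathfrak{m}\subseteq R$ containing $p$; put $\kappa=R/\mathfrak{m}$, a finite extension of $\mathbb{F}_p$, and write $\bar x$ for the image of $x\in R$ in $\kappa$. Reducing $P(X)=\prod_i(X-\lambda_i)$ modulo $\mathfrak{m}$ gives $\bar P(X)=\prod_i(X-\bar\lambda_i)$ in $\kappa[X]$, so $\kappa$ is a splitting field of $\bar P$ over $\mathbb{F}_p$ and every irreducible factor of $\bar P$ splits in $\kappa$.

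The single place where the hypothesis $p\nmid\text{disc}(P)$ enters is the following: the discriminant of $\bar P$ is the reduction modulo $p$ of $\text{disc}(P)$ (here we use that $P$ is monic), so $\bar P$ has $n$ pairwise distinct roots in $\kappa$. Hence $x\mapsto\bar x$ restricts to a \emph{bijection} $S=\{\lambda_1,\dots,\lambda_n\}\to\bar S:=\{\bar\lambda_1,\dots,\bar\lambda_n\}$, and $\bar S$ is exactly the root set of $\bar P$ in $\kappa$.

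Next I would invoke the decomposition-group machinery for $K/\mathbb{Q}$. Let $D=\{\sigma\in G:\sigma(\mathfrak{m})=\mathfrak{m}\}$; each $\sigma\in D$ induces an $\mathbb{F}_p$-automorphism $\bar\sigma$ of $\kappa$ via $\overline{\sigma(x)}=\bar\sigma(\bar x)$, and the resulting homomorphism $D\to\text{Gal}(\kappa/\mathbb{F}_p)$ is \emph{surjective}. This surjectivity is the one genuinely nontrivial input, and I expect it to be the main obstacle in a self-contained write-up: it uses the normality of $K/\mathbb{Q}$ and is proved by choosing, via the Chinese Remainder Theorem, an element $\theta\in R$ that generates $\kappa$ over $\mathbb{F}_p$ and lies in every maximal ideal over $p$ different from $\mathfrak{m}$, then analysing the factorization of $\prod_{\sigma\in G}(X-\sigma\theta)\in\mathbb{Z}[X]$ modulo $\mathfrak{m}$. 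Granting this, pick $\sigma\in D$ mapping to the Frobenius generator $\phi\colon x\mapsto x^p$ of the cyclic group $\text{Gal}(\kappa/\mathbb{F}_p)$; then $\overline{\sigma(\lambda_i)}=\bar\sigma(\bar\lambda_i)=\bar\lambda_i^{\,p}=\phi(\bar\lambda_i)$ for every $i$.

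It remains to read off the cycle type of $\sigma|_S\in G\subseteq\text{Sym}(S)$. Since $\sigma(\lambda_i)\in S$ for each $i$ and $S\to\bar S$ is a bijection, the identity $\overline{\sigma(\lambda_i)}=\phi(\bar\lambda_i)$ says precisely that this bijection conjugates $\sigma|_S$ to $\phi|_{\bar S}$; in particular the two permutations have the same cycle type. Now factor $\bar P=\bar\pi_1\cdots\bar\pi_k$ in $\mathbb{F}_p[X]$ as in the statement, with each $\bar\pi_i$ irreducible of degree $d_i$: all $d_i$ roots of $\bar\pi_i$ lie in $\kappa$ and form a single orbit of $\phi$ of length $d_i$ (the standard description of Frobenius acting on the roots of an irreducible polynomial over a finite field), and because the $\bar\pi_i$ are pairwise distinct these orbits partition $\bar S$. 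Therefore $\phi|_{\bar S}$, and hence $\sigma|_S$, is a permutation of cycle type $(d_1,\dots,d_k)$, which is the element of $\text{Gal}(P)$ demanded by the theorem.
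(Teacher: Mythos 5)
Your argument is correct, and it is the standard proof of Dedekind's theorem via a Frobenius element at an unramified prime: reduce the $G$-stable order $R=\mathbb{Z}[\lambda_1,\dots,\lambda_n]$ modulo a maximal ideal $\mathfrak{m}$ over $p$, use $p\nmid\text{Disc}(P)$ to see that reduction is a bijection on roots, lift the Frobenius of $\kappa/\mathbb{F}_p$ through the decomposition group, and read off the cycle type from the Frobenius orbits on the roots of the $\bar\pi_i$. Note that the paper itself gives no proof of this statement --- it is quoted as a classical tool --- so there is no in-paper argument to compare against. The one step you leave as a sketch, the surjectivity of $D\to\text{Gal}(\kappa/\mathbb{F}_p)$, is indeed the genuinely nontrivial input, and the argument you indicate (choose $\theta\in R$ by the Chinese Remainder Theorem generating $\kappa$ over $\mathbb{F}_p$ and lying in every other maximal ideal over $p$, then factor $\prod_{\sigma\in G}(X-\sigma\theta)\in\mathbb{Z}[X]$ modulo $\mathfrak{m}$) is exactly the usual way to prove it for the possibly non-maximal order $R$; the remaining steps (non-vanishing of $R/pR$, finiteness of $\kappa$, separability of $\bar P$, and the identification of the cycle type with $(d_1,\dots,d_k)$) are complete and accurate as written.
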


\subsection{Galois groups of polynomials of degree four and five}\label{SubSec:GalG45}

For a reference of the following see \cite{jensen02}. 
We consider in this subsection a monic irreducible polynomial 
  \[
  Q(X)=X^k+\sum_{i=0}^{k-1} b_i\,X^i\in \mathbb{Q}[X]
  \]
of degree four or five with set of roots $S=\{\mu_1,\dots,\mu_k\}$ ($k=4$ or $k=5$).

Let first $k=4$. We define the \textit{cubic resolvent} $CR_Q(Y)\in\mathbb{Q}[Y]$ of the polynomial $Q(X)$ as 
\begin{align*}
  CR_Q(Y)=(Y-(\mu_1\mu_2+\mu_3\mu_4))\,(Y-(\mu_1\mu_3+\mu_2\mu_4))\,(Y-(\mu_1\mu_4+\mu_2\mu_3)).
\end{align*}
Direct calculations show 
\begin{displaymath}
    CR_Q(Y)=Y^3-b_2\,Y^2+(b_1\,b_3-4b_0)\,Y-(b_0\,b_3^3-4\,b_0\,b_2+b_1^2).
\end{displaymath} 
Furthermore a computation reveals the equality 
 \[
 \text{Disc}(Q(X))=\text{Disc}(CR_Q(Y))
 \]
between the discriminant $\text{Disc}(Q(X))$ of $Q(X)\in\mathbb{Q}[X]$ and the discriminant $\text{Disc}(CR_Q)$ of $CR_Q(Y)\in\mathbb{Q}[Y]$.
The five transitive subgroups of the permutation group $S_4$ are the Klein-four group $V_4$, $C_4$, the dihedral group $D_4$, $A_4$ and $S_4$ itself. The next theorem will help to determine the Galois group of the polynomial $Q(X)\in\mathbb{Q}[X]$. A proof can be found in \cite[Theorem 2.2.2]{jensen02}.

\begin{theorem}\label{Thm:CRGalGr}
Let $Z(CR_Q)$ be the splitting field of the cubic resolvent $CR_Q(Y)\in\mathbb{Q}[Y]$ from above and let $m=[Z(CR_Q):\mathbb{Q}]$ be the degree of the field extension over the rational numbers. Then we have for the Galois group $\text{Gal}(Q)\leq S_4$ of the irreducible polynomial $Q(X)\in\mathbb{Q}[X]$ from above:
\begin{displaymath}
\text{Gal}(Q)=
\begin{cases}
S_4               & \mbox{if $m=6$} \\
A_4               & \mbox{if $m=3$} \\
D_4~\text{or}~C_4 & \mbox{if $m=2$} \\
V_4               & \mbox{if $m=1$}
\end{cases}
\end{displaymath}
\end{theorem}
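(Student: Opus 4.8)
The plan is to exploit the natural action of $\text{Gal}(Q)$ on the three roots $\theta_1=\mu_1\mu_2+\mu_3\mu_4$, $\theta_2=\mu_1\mu_3+\mu_2\mu_4$, $\theta_3=\mu_1\mu_4+\mu_2\mu_3$ of the cubic resolvent, together with the classification of transitive subgroups of $S_4$. First I would record two preliminary facts. Since $Q(X)$ is irreducible over $\mathbb{Q}$, a field of characteristic zero, it is separable; hence its roots $\mu_1,\dots,\mu_4$ are distinct and $\text{Disc}(Q)\neq 0$, so by the identity $\text{Disc}(CR_Q)=\text{Disc}(Q)$ the $\theta_i$ are pairwise distinct and $CR_Q(Y)$ is itself separable, giving a genuine embedding of the Galois objects into $\text{Sym}(\{\theta_1,\theta_2,\theta_3\})\cong S_3$. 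Second, irreducibility of $Q$ forces $\text{Gal}(Q)$ to be a transitive subgroup of $S_4$, so up to conjugacy $\text{Gal}(Q)$ is one of $V_4$ (the normal Klein four subgroup, order $4$), $C_4$ (order $4$), $D_4$ (order $8$), $A_4$ (order $12$) or $S_4$ (order $24$); in particular $4\mid|\text{Gal}(Q)|$.

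Next I would set up the restriction homomorphism $\phi\colon\text{Gal}(Q)\to\text{Sym}(\{\theta_1,\theta_2,\theta_3\})\cong S_3$ coming from the inclusion of splitting fields $Z(CR_Q)\subseteq Z(Q)$. An element $\sigma\in\text{Gal}(Q)$ lies in $\ker\phi$ exactly when it fixes each $\theta_i$, i.e. when it fixes $Z(CR_Q)$ pointwise, so by the Galois correspondence $\ker\phi=\text{Gal}(Z(Q)/Z(CR_Q))$ and, by the tower law together with the fundamental theorem,
\[ m=[Z(CR_Q):\mathbb{Q}]=[\text{Gal}(Q):\ker\phi]=|\phi(\text{Gal}(Q))|. \]
The key group-theoretic input to establish here is that the subgroup of $S_4$ fixing each of the three unordered pairings $\{\{1,2\},\{3,4\}\}$, $\{\{1,3\},\{2,4\}\}$, $\{\{1,4\},\{2,3\}\}$ is precisely the normal Klein four subgroup $V_4=\{e,(12)(34),(13)(24),(14)(23)\}$; equivalently, the action of $S_4$ on these three pairings is the classical surjection $S_4\twoheadrightarrow S_3$ with kernel $V_4$. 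Since $\phi$ is the restriction to $\text{Gal}(Q)\le S_4$ of this action, it follows that $\ker\phi=\text{Gal}(Q)\cap V_4$ and hence $m=|\text{Gal}(Q)|/|\text{Gal}(Q)\cap V_4|$.

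It then remains to compute $|G\cap V_4|$ for each transitive subgroup $G\le S_4$ and read the correspondence backwards. Because $V_4\trianglelefteq S_4$, it lies inside $A_4$ and inside every Sylow $2$-subgroup $D_4$ of $S_4$, so $G\cap V_4=V_4$ has order $4$ when $G\in\{S_4,A_4,D_4,V_4\}$, whereas a cyclic $C_4=\langle(1234)\rangle$ meets $V_4$ only in $\{e,(13)(24)\}$, so $|C_4\cap V_4|=2$. This yields $m=6,3,2,2,1$ for $G=S_4,A_4,D_4,C_4,V_4$ respectively. As these five values of $m$ are distinct except that both $D_4$ and $C_4$ give $m=2$, the value of $m$ determines $\text{Gal}(Q)$ exactly as in the statement; the only point needing care is the case $m=2$, where one must invoke transitivity ($4\mid|G|$) to see that a subgroup of order $4$ occurring there must be the cyclic $C_4$ and not $V_4$ — which is automatic, since $G=V_4$ forces $m=1$. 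The main (and essentially only) obstacle is the verification that the pointwise stabilizer of the three pairings is exactly $V_4$; once that is in hand the proof is just bookkeeping with the short list of transitive subgroups of $S_4$ and their orders.
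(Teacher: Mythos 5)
Your argument is correct: the identification of $\ker\phi$ with $\mathrm{Gal}(Q)\cap V_4$ (valid because the $\theta_i$ are distinct, e.g.\ via $\mathrm{Disc}(CR_Q)=\mathrm{Disc}(Q)\neq 0$), the index formula $m=[\mathrm{Gal}(Q):\mathrm{Gal}(Q)\cap V_4]$, and the case check over the transitive subgroups of $S_4$ are all sound. The paper itself gives no proof of this theorem but refers to \cite{jensen02}, and your proof is essentially the standard argument found there, so there is nothing to add.
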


\begin{remark}\label{Rem:CRandDisQ}
Since the cubic resolvent $CR_Q(Y)\in\mathbb{Q}(Y)$ of $Q(X)\in\mathbb{Q}[X]$ is a degree three polynomial it is sufficient for the splitting field $Z(CR_Q)$ to be a degree six field extension over the rational numbers, that $CR_Q(Y)\in\mathbb{Q}(Y)$ is irreducible and that the discriminant $\text{Disc}(CR_Q)=\text{Disc}(Q)$ of $CR_Q(Y)$ respectively $Q(X)$ is not a square of a rational number.
\end{remark}

Now let $k=5$. The \textit{Weber sextic resolvent} $SWR_Q(Y)\in\mathbb{Q}[Y]$ defined as in Definition 2.3.2 of \cite{jensen02} helps to determine the Galois group of the quintic polynomial $Q(X)\in\mathbb{Q}[X]$ as we will explain in the following Theorem and Remark (see \cite[Theorem 2.3.3]{jensen02} for a proof of the theorem):

\begin{theorem}\label{Thm:SexticWR}
The Galois group $\text{Gal}(Q)\leq S_5$ of the irreducible monic polynomial $Q(X)\in\mathbb{Q}[X]$ is solvable if and only if the sextic Weber resolvent $SWR_Q(Y)\in\mathbb{Q}[Y]$ has a root in the rational numbers $\mathbb{Q}$.
\end{theorem}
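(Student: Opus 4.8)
The plan is to deduce the statement from two classical ingredients: the classification of the transitive subgroups of $S_5$, and the standard dictionary relating rational roots of a resolvent polynomial to the conjugacy of the Galois group into the associated point stabilizer. Throughout, write $G:=\text{Gal}(Q)\leq\text{Sym}(S)\cong S_5$, where $S=\{\mu_1,\dots,\mu_5\}$; since $Q$ is irreducible and $\mathbb{Q}$ has characteristic zero, $G$ is a transitive subgroup of $S_5$.

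\textbf{Step 1 (group theory).} I would first recall that the transitive subgroups of $S_5$ are, up to conjugacy, exactly $C_5$, $D_5$, $F_{20}=\mathrm{AGL}(1,\mathbb{F}_5)$, $A_5$ and $S_5$, where $F_{20}$ --- of order $20$ --- is the normalizer in $S_5$ of a Sylow $5$-subgroup, generated for instance by the $5$-cycle $(1\,2\,3\,4\,5)$ and the $4$-cycle $(2\,3\,5\,4)$. Among these, $C_5$, $D_5$ and $F_{20}$ are solvable and are precisely the transitive subgroups of $F_{20}$, whereas $A_5$ is a non-abelian simple group and $S_5$ has $A_5$ as a composition factor, so neither is solvable; and neither $A_5$ nor $S_5$ is conjugate into $F_{20}$, for order reasons ($|A_5|=60$ and $|S_5|=120$ do not divide $20$). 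Hence $G$ is solvable if and only if $G$ is contained in some conjugate of $F_{20}$ in $S_5$.

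\textbf{Step 2 (the sextic resolvent as an $F_{20}$-resolvent).} Next I would verify that the Weber sextic resolvent $SWR_Q(Y)$ of Definition 2.3.2 in \cite{jensen02} is, up to an irrelevant normalization, the resolvent polynomial attached to the subgroup $F_{20}\leq S_5$: there is a polynomial expression $\theta$ in $\mu_1,\dots,\mu_5$ whose stabilizer in $S_5$ equals $F_{20}$ --- a finite check, amounting to verifying that the two generators above fix $\theta$ and that the full stabilizer has order at most $20$ --- so that, letting $\theta=\theta_1,\dots,\theta_6$ be the values of $\theta$ along the $[S_5:F_{20}]=6$ cosets of $F_{20}$, one has $SWR_Q(Y)=\prod_{i=1}^{6}(Y-\theta_i)\in\mathbb{Q}[Y]$ by symmetry of its coefficients, with each $\theta_i$ lying in the splitting field $Z(Q)$. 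Assuming for the moment that $\theta_1,\dots,\theta_6$ are pairwise distinct, the standard resolvent criterion gives: $G$ fixes some $\theta_i$ if and only if $G$ is contained in $\text{Stab}(\theta_i)$, which is a conjugate of $F_{20}$; and any $G$-fixed element of $Z(Q)$ lies in $Z(Q)^G=\mathbb{Q}$. Therefore $SWR_Q$ has a root in $\mathbb{Q}$ if and only if $G$ is conjugate into $F_{20}$, which by Step 1 is equivalent to $G$ being solvable. (The implication ``$G$ solvable $\Rightarrow$ $SWR_Q$ has a rational root'' needs no separability: if $G\leq\tau F_{20}\tau^{-1}$ then the corresponding $\theta_i$ is $G$-invariant, hence rational.)

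\textbf{Step 3 (the delicate point).} The main obstacle is the hypothesis granted in Step 2, namely that the six values $\theta_i$ are distinct, so that $SWR_Q$ is separable and a rational root $\theta_i$ genuinely forces $G\leq\text{Stab}(\theta_i)$ rather than merely $G$ permuting a repeated value among several cosets. Indeed, if $G$ were $A_5$ or $S_5$ it would act transitively on the six cosets, so a single coincidence $\theta_i=\theta_j$ would force $SWR_Q=(Y-c)^6$ with $c\in\mathbb{Q}$, a genuine potential counterexample to the ``$\Leftarrow$'' direction. Since $\text{Stab}_{S_5}(\theta)=F_{20}$ exactly, $\text{Disc}(SWR_Q)$ is a nonzero polynomial in the coefficients of $Q$, so the $\theta_i$ are distinct for $Q$ outside a proper closed locus; I would clear the residual degenerate cases either by exhibiting a relation $\text{Disc}(SWR_Q)=(\text{square})\cdot\text{Disc}(Q)^k$, in which case irreducibility of $Q$ already forces $\text{Disc}(SWR_Q)\neq0$, or, failing such an identity, by replacing $Q$ with a Tschirnhaus transform having the same Galois group and splitting field but a separable Weber resolvent, and checking that this substitution does not change whether the Weber resolvent acquires a rational root. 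This separability bookkeeping, together with the finite stabilizer computation of Step 2, is where the computational content of the proof lies; the remainder is the formal Galois-theoretic argument of Steps 1 and 2.
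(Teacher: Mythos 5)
The paper itself contains no proof of this statement: Theorem \ref{Thm:SexticWR} is quoted from the reference \cite{jensen02} (where $SWR_Q$ is Definition 2.3.2 and the theorem is proved there), so there is nothing internal to compare with; your outline is the standard relative-resolvent argument one finds in that reference and in Dummit's treatment of solvable quintics. Steps 1 and 2 are sound as far as they go: the transitive subgroups of $S_5$ are $C_5,D_5,F_{20},A_5,S_5$, solvability of a transitive $G\leq S_5$ is equivalent to $G$ lying in a conjugate of $F_{20}$, and an element of $Z(Q)$ fixed by $G$ is rational, so the direction ``$G$ solvable $\Rightarrow$ $SWR_Q$ has a rational root'' is complete once one knows that $SWR_Q(Y)=\prod_{i=1}^6 (Y-\theta_i)$ for an $F_{20}$-invariant $\theta$.

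The genuine gap is the converse, and you have located it yourself without closing it. First, the identification of the explicitly defined polynomial of Definition 2.3.2 in \cite{jensen02} with the coset resolvent $\prod_i(Y-\theta_i)$ is asserted, not verified. Second, and more seriously, the whole ``$\Leftarrow$'' direction hinges on the distinctness of the $\theta_i$: as you note, if $G\supseteq A_5$ then $G$ is transitive on the six cosets, so a rational root is compatible with $SWR_Q=(Y-c)^6$ unless this degeneration is excluded. Of your two proposed repairs, only the first (an identity showing $\mathrm{Disc}(SWR_Q)$ is a nonzero multiple of a power of $\mathrm{Disc}(Q)$ times a square, or an explicit factorization of the differences $\theta_i-\theta_j$ into root differences) would settle the matter, and you do not establish it. The Tschirnhaus fallback does not work as described: replacing $Q$ by a transform $Q'$ with separable resolvent yields the equivalence for $SWR_{Q'}$, but the implication to be proved concerns a rational root of the possibly inseparable $SWR_Q$ of the original polynomial, and ``checking that this substitution does not change whether the Weber resolvent acquires a rational root'' is essentially the statement under proof in the degenerate case. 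So as written the proposal proves only one implication unconditionally; the other is conditional on a separability claim whose verification (carried out in \cite{jensen02} via the explicit shape of the Weber resolvent) is precisely the missing content.
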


\begin{remark}\label{Rem:SWRFullGG}
The only transitive subgroups of $S_5$ are $C_5$, the dihedral group $D_5$, $F_{20}$, $A_5$ and $S_5$ itself. Hence the only transitive subgroups that are non-solvable are $A_5$ and $S_5$. Thus it is easy to ensure that $\text{Gal}(Q)=S_5$ with the help of the previous theorem and the fact that $\text{Gal}(Q)\leq A_5$ if and only if the discriminant $\text{Disc}(Q)$ is a square of a rational number.
\end{remark}

\subsection{Galois group of certain reciprocal polynomials }\label{SubSec:RecHyp}
Consider from now on an irreducible monic polynomial 
  \[
  P(X)=\sum_{i=0}^n c_i\,X^i\in\mathbb{Z}[X]
  \]
of degree $n=2k$ with $c_n=c_0=1$ and $c_i=c_{n-i}$ for $i=1,\dots,k$. 

The Galois group of such a $P(X)\in\mathbb{Z}[X]$ can be naturally seen as a subgroup of the hyperoctahedral group $G_k$ as we will see in Section \ref{SubSec:DescGalGr}. But first we want to explain the hyperoctahedral group as we want to use it in this text:

For $k\geq 1$ we define the hyperoctahedral group as the semidirect product $G_k=\mathbb{Z}_2^k\rtimes S_k$, where $S_k$ is the permutation group on a set with $k$ elements. 
The group $S_k$ acts on $\mathbb{Z}_2^k$ by $\tau(\epsilon_1,\dots,\epsilon_k)=(\epsilon_{\tau(1)},\dots,\epsilon_{\tau(k)})$ $(\epsilon_i\in\{\pm1\})$ and the multiplication on the hyperoctahedral group $G_k$ is defined by
  \begin{align}\label{Eq:ProdHypGr}
  (\epsilon,\tau)\cdot (\tilde\epsilon,\tilde \tau)=({\tilde\tau}^{-1}(\epsilon) \cdot \tilde\epsilon,~\tau\circ\tilde\tau).
  \end{align}
Compare section two in \cite{jackson04}.

\subsubsection{Description of the Galois group}\label{SubSec:DescGalGr}

Let again $P(X)$ as in the beginning of the section. Since $P(X)\in\mathbb{Z}[X]$ is reciprocal and its splitting field is of zero characteristic and hence perfect, the polynomial $P(X)$ has $n=2k$ distinct roots which come in pairs $\{\lambda_i,\, \lambda_i^{-1}\}$ for $i\in\{1,\dots,k\}$. Denote by $Z(P)$ the splitting field of the polynomial $P(X)\in\mathbb{Z}[X]$ and by $\text{Gal}(P)=\text{Aut}_\mathbb{Q}(Z(P))$ the Galois group of $P(X)\in\mathbb{Z}[X]$. An automorphism $\sigma\in\text{Gal}(P)$ necessarily permutes the $k$ pairs of roots $\{\lambda_i,\,\lambda_i^{-1}\}$ of $P(X)\in\mathbb{Z}[X]$ and this leads to a group homomorphism
  \[
  \phi\colon \text{Gal}(P)\longrightarrow S_k,\quad \sigma\longmapsto \tau_\sigma,
  \]
where we set $\tau_\sigma(i)=j$ if $\sigma(\{\lambda_i,\,\lambda_i^{-1}\})=\{\lambda_j,\,\lambda_j^{-1}\}$ $(i,j\in\{1,\dots,k\})$. The kernel $N$ of $\phi$ is given by the automorphisms $\sigma\in \text{Gal}(P)$ such that $\sigma(\{\lambda_i,\,\lambda_i^{-1}\})=\{\lambda_i,\,\lambda_i^{-1}\}$ for every $i\in\{1,\dots,k\}$. Hence, we can identify $N$ with a subgroup of $\mathbb{Z}_2^k$ via the following homomorphism
  \[
  \iota\colon N \longrightarrow \mathbb{Z}_2^k,\quad 
  \sigma\longmapsto \epsilon^\sigma=(\epsilon_1^\sigma,\dots,\epsilon_k^\sigma),
  \]
where $\epsilon_i^\sigma=1$ if $\sigma(\lambda_i)=\lambda_i$ and $\epsilon_i^\sigma=-1$ if $\sigma(\lambda_i)=\lambda_i^{-1}$. Since $N$ is the kernel of $\phi$, it is normal and together with the map $\iota$, we can represent $\text{Gal}(P)=N\rtimes \text{Im}(\phi)$ as a subgroup of the hyperoctahedral group $G_k=\mathbb{Z}_2^k\rtimes S_k$. The whole situation is visualized in the following commutative diagram:
\begin{center}
\begin{tikzpicture}[scale=1.5]
\draw(-1,0) node[left]{$N$};
\draw[->](-1,0) -- node[above]{$\iota$} (0.7,0) node[right]{$\text{Gal}(P)$};
\draw[->](1.6,0) -- node[above]{$\phi$} (3,0) node[right]{$S_k$};
\draw[->](3.2,-0.2) -- (3.2,-0.7) node[below]{$S_k$};
\draw[->](1.2,-0.2) -- (1.2,-0.7) node[below]{$G_k=\mathbb{Z}_2^k\rtimes S_k$};
\draw[->](-1.2,-0.2) -- (-1.2,-0.7) node[below]{$\mathbb{Z}_2^k$};
\draw[->](-1,-0.9) --node[above]{$i_k$} (0.3,-0.9);
\draw[->](2.1,-0.9) -- node[above]{$\pi_k$} (3,-0.9);
\end{tikzpicture}
\end{center}

Here $i_k\colon \mathbb{Z}_2^k\to G_k$ is the inclusion map and $\pi_k\colon G_k\to S_k$ is the projection map, what makes of course $\mathbb{Z}_2^k\overset{i_k}{\longrightarrow} G_k\overset{\pi_k}{\longrightarrow}S_k$ a split exact sequence. 

\subsubsection{Action of the hyperoctahedral group on the splitting field}\label{Rem:ActHOGroup}
In the following we also want to consider the action of the hyperoctahedral group $G_k$ on the splitting field $Z(P)$ of $P(X)\in\mathbb{Z}[X]$ that fits together with the action of $\text{Gal}(P)$ from above. Hence we define the action of $G_k$ on $Z(P)$ via a permutation of the roots $\{\lambda_i,\,\lambda_i^{-1}\mid i=1,\dots,k\}$ in the following way: For $\epsilon=(\epsilon_1,\dots,\epsilon_k)\in\{(\pm 1,\dots, \pm 1)\}$, every $\tau\in S_k$  and $i=1,\dots,k$ we define
\begin{align}\label{Eq:GrActField}
(\epsilon,\tau)\,.\,\lambda_i = \lambda_{\tau(i)}^{\epsilon_i} \quad \text{and} 
\quad (\epsilon,\tau)\,.\,\lambda_{i}^{-1} = \lambda_{\tau(i)}^{-\epsilon_i}.
\end{align}

\begin{lemma}
    The equalities in (\ref{Eq:GrActField}) indeed define an action of the group $G_k$ on the set of roots $\{\lambda_i,\,\lambda_i^{-1}\mid i=1,\dots,k\}$.
\end{lemma}
\begin{proof}
We have to show the compatibility with the multiplication on $G_k$ defined in $(\ref{Eq:ProdHypGr})$. Let $\delta\in\{\pm 1\}$ and 
$i\in\{1,\dots,k\}$. For $(\epsilon,\tau), (\tilde\epsilon,\tilde \tau)\in G_k$ we have
\begin{align*}
     (\epsilon,\tau).\Bigl( (\tilde\epsilon,\tilde \tau)\,.\,\lambda_i^\delta \Bigr) &=(\epsilon,\tau)\,.\,\lambda_{\tilde\tau(i)}^{\delta\cdot\tilde\epsilon_i}
   = \lambda_{\tau\circ\tilde\tau(i)}^{(\delta\cdot \tilde\epsilon_i)\cdot \epsilon_{\tilde\tau(i)}}
   =(\tilde\tau^{-1}(\epsilon)\cdot\tilde\epsilon,~\tau\circ\tilde\tau)\,.\,\lambda_i^\delta ,
\end{align*}
what ends the proof since $(\epsilon,\tau)\cdot (\tilde\epsilon,\tilde \tau)=({\tilde\tau}^{-1}(\epsilon) \cdot \tilde\epsilon,~\tau\circ\tilde\tau)$ by definition of the product on $G_k$.
\end{proof}

In our subsequent discussion, we shall need to describe the Galois groups $\text{Gal}(P)$ such that the map $\phi\colon\text{Gal}(P)\to S_k$ is onto. The next Proposition will help us with that.

\begin{proposition}\label{Prop:SubgrPhiOnto}
Let $k\geq 2$ and $P(X)\in\mathbb{Z}[X]$ be an irreducible reciprocal polynomial of degree $2k$ with $\phi(\text{Gal}(P))=S_k$. Then $\text{Gal}(P)$ is isomorphic to one of the following subgroups of $G_k=\mathbb{Z}_2^k\rtimes S_k$:

Either $\text{Gal}(P)\simeq S_k$, $\text{Gal}(P) \simeq G_k$ or $\text{Gal}(P)$ is isomorphic to one of the following three subgroups $H_{k,1},\, H_{k,2}, \, H_{k,3}\leq G_k$, where
\begin{align*}
H_{k,1}& :=\{((\epsilon_1,\dots,\epsilon_k),\tau)\mid ~\prod_{i=1}^k \epsilon_i = 1\},\\
H_{k,2}& := \{((\epsilon_1,\dots,\epsilon_k),\tau)\mid ~\textrm{sign}(\tau)\, \prod_{i=1}^k\epsilon_i= 1\},\\
\text{and}\quad
H_{k,3}& :=\{(+1,\dots,+1),\,(-1,\dots,-1)\}\times S_k.
\end{align*}
\end{proposition}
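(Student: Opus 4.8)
The plan is to analyze $G:=\mathrm{Gal}(P)$, viewed as a subgroup of $G_k=\mathbb{Z}_2^k\rtimes S_k$, through its normal subgroup $N:=\ker\phi=G\cap\mathbb{Z}_2^k$ together with the quotient $\phi(G)=S_k$. Since $N$ is normalized by $G$ and $\phi$ is onto, $N$ is invariant under the permutation action of $S_k$ on $\mathbb{Z}_2^k$, i.e.\ it is a submodule of the natural permutation module. So the first step is the purely representation-theoretic lemma that the $S_k$-submodules of $\mathbb{Z}_2^k$ are exactly $0$, the diagonal line $D=\{(1,\dots,1),(-1,\dots,-1)\}$, the augmentation hyperplane $H_0=\{(\epsilon_i):\prod_i\epsilon_i=1\}$, and $\mathbb{Z}_2^k$ itself. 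I would prove this by a weight argument: a nonzero submodule either consists only of $(1,\dots,1)$ and $(-1,\dots,-1)$, hence equals $D$, or it contains a vector with some coordinate $-1$ and some coordinate $+1$, and then applying a transposition and multiplying produces a vector with exactly two coordinates equal to $-1$; the $S_k$-orbit of such a vector spans $H_0$, and since $[\mathbb{Z}_2^k:H_0]=2$ the submodule must then be $H_0$ or all of $\mathbb{Z}_2^k$.

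Given the lemma, the proof splits into four cases. If $N=0$, then $\phi$ restricts to an isomorphism $G\xrightarrow{\ \sim\ }S_k$, so $G\cong S_k$. If $N=\mathbb{Z}_2^k$, a cardinality count $|G|=|N|\cdot k!=2^k k!=|G_k|$ forces $G=G_k$. If $N=H_0$, the same count gives $[G_k:G]=2$; here I would compute $G_k^{\mathrm{ab}}\cong(\mathbb{Z}/2)^2$ — the commutator subgroup being $H_0\rtimes A_k$, since it contains $[S_k,S_k]=A_k$ and all products $(\epsilon\cdot\tau(\epsilon),\mathrm{id})$, which generate $H_0$ — so that $G_k$ has exactly three index-two subgroups: the kernels of $\mathrm{sign}\circ\pi_k$, of $P\colon((\epsilon_i),\tau)\mapsto\prod_i\epsilon_i$, and of $P\cdot(\mathrm{sign}\circ\pi_k)$. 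The first is $\mathbb{Z}_2^k\rtimes A_k$, which does not project onto $S_k$, while the other two are precisely $H_{k,1}$ and $H_{k,2}$; hence $G\in\{H_{k,1},H_{k,2}\}$ in this case.

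The remaining case $N=D$ is the crux. Now $|G|=2\cdot k!$, and since $D$ is central in $G_k$, $G$ is a central extension $1\to\mathbb{Z}_2\to G\to S_k\to 1$. Passing to $G_k/D=(\mathbb{Z}_2^k/D)\rtimes S_k$ exhibits $G/D$ as a complement to $\mathbb{Z}_2^k/D$, i.e.\ as the graph of a cocycle in $Z^1(S_k,\mathbb{Z}_2^k/D)$, so that $G$ is conjugate in $G_k$ to $H_{k,3}=D\times S_k$ precisely when this class vanishes in $H^1(S_k,\mathbb{Z}_2^k/D)$. To finish one must show $G\cong\mathbb{Z}_2\times S_k\cong H_{k,3}$, i.e.\ that the extension is the split one. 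Directly: for each Coxeter generator $s_i=(i,i+1)$ of $S_k$ one picks a preimage in $G$, observes (using $k\ge 3$: the square of such a preimage lies in $N=D$ but has a coordinate equal to $+1$, hence is trivial) that every preimage of a transposition is an involution, and then corrects the chosen lifts by the central element $z=((-1,\dots,-1),\mathrm{id})$ to make the braid relations hold — the commutation relations $(s_is_j)^2=1$ for $|i-j|\ge 2$ are the delicate point and have to be checked by hand. Alternatively, and more cleanly, one reads off all subgroups of $G_k$ surjecting onto $S_k$ from Jackson's classification of subgroups of the hyperoctahedral group \cite{jackson04} and matches them against the list $S_k,\ G_k,\ H_{k,1},\ H_{k,2},\ H_{k,3}$.

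The main obstacle is exactly this last case: ensuring that in the $N=D$ situation the central extension of $S_k$ by $\mathbb{Z}_2$ sitting inside $G_k$ is split, equivalently controlling $H^1(S_k,\mathbb{Z}_2^k/D)$ — this already needs some care (and is where the reference to \cite{jackson04} earns its keep), whereas the submodule lemma, the order counts, and the identification of the index-two subgroups are routine.
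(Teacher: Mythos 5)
Your submodule lemma and the cases $N:=\mathrm{Gal}(P)\cap\mathbb{Z}_2^k\in\{0,\;H_0,\;\mathbb{Z}_2^k\}$ are fine, but the case you yourself flag as the crux, $N=D:=\{\pm(1,\dots,1)\}$, contains a genuine gap: the splitting you are trying to force is simply false for $k=4$. Concretely, let $GL(2,3)$ act on the eight nonzero vectors of $\mathbb{F}_3^2$; this action commutes with $v\mapsto -v$, so it embeds $GL(2,3)$ into the group of signed permutations of the four lines, i.e.\ into $G_4$. The image meets $\mathbb{Z}_2^4$ exactly in $D$ (the scalars $\pm I$) and projects onto $PGL(2,3)\cong S_4$, yet it is the nonsplit double cover $2\cdot S_4^{+}$: the preimage of the Klein group $V_4\le A_4$ is $Q_8\le SL(2,3)$, so \emph{every} lift of a product of two disjoint transpositions has order $4$ with square the central element. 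Hence your plan --- lift the Coxeter generators to involutions (correct for $k\ge 3$) and then adjust by the central element to enforce $(t_it_j)^2=1$ for $|i-j|\ge 2$ --- cannot succeed at $k=4$: replacing $t_i$ by $zt_i$ does not change $(t_1t_3)^2$, which is forced to equal $z$ in this subgroup. In cohomological terms, the connecting map $H^1(S_k,\mathbb{Z}_2^k/D)\to H^2(S_k,\mathbb{Z}_2)$ is injective with vanishing source for $k=3$ and $k\ge 5$ (so there your claim is true, though your sketch does not yet prove it; for those $k$ the ``free coordinate'' argument applies to double transpositions as well, and the orders of lifts of $(12)$ and $(12)(34)$ determine the extension class), but for $k=4$ it is nonzero and hits exactly the class above. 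Since this signed-permutation group is transitive on the eight points and arises over $\mathbb{Q}$ (e.g.\ as the Galois action on the nonzero $3$-torsion of an elliptic curve with surjective mod-$3$ representation, combined with a Hilbert~90 argument producing a degree-$8$ reciprocal generator), no argument can prove the statement you attempt in the $N=D$ case at $k=4$; the Proposition read literally admits this extra isomorphism type, which is none of $S_4$, $G_4$, $H_{4,1}$, $H_{4,2}$, $H_{4,3}$.

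Your fallback of ``reading the list off Jackson'' is not an independent repair: it \emph{is} the paper's proof, which treats $k=2$ by hand and for $k\ge 3$ invokes the argument of Proposition~4 of \cite{jackson04} together with simplicity of $A_m$ and the structure of $A_4$; in view of the example above, exactly what that argument yields at $k=4$ has to be checked rather than quoted. If you want a self-contained and correct version, prove the split cases for $k=3$ and $k\ge 5$ as indicated, handle $k=2$ directly, and at $k=4$ either add the extra type $2\cdot S_4^{+}$ to the conclusion or add a hypothesis excluding it; note that this exotic subgroup lies inside $H_{4,2}$ (both the product of the signs and the sign of the induced permutation restrict to the determinant character of $GL(2,3)$), which is why the later arguments that exclude $H_{4,1}$ and $H_{4,2}$ via $\delta_{4,1},\delta_{4,2}$ still force $\mathrm{Gal}(P)=G_4$ --- but that is a repair of the statement and its use, not of the proof as you wrote it.
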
 

\begin{proof}
For $k=2$ this is an easy exercise since the only possible subgroups of $G_2=\mathbb{Z}_2^2\rtimes S_2$ which surject onto $S_2$ and which are not the full group, are groups of order four. In this case 
\begin{displaymath}
H_{2,1}=H_{2,3}=\langle ~(\lambda_1,\lambda_2)(\lambda_1^{-1},\lambda_2^{-1}) , (\lambda_1,\lambda_1^{-1})(\lambda_2,\lambda_2^{-1})~\rangle \simeq V_4 
\end{displaymath}
and
\begin{displaymath}
H_{2,2}=\langle ~(\lambda_1\lambda_2^{-1}\lambda_1^{-1}\lambda_2)~ \rangle\simeq C_4.
\end{displaymath}
For $k=3,4$ and all $k\geq 5$ the result follows as in Proposition 4 in \cite{jackson04} since $A_3$ is a simple group as well as all $A_k$ with $k\geq 5$ and the only non-trivial normal subgroup of $A_4$ is the Klein four-group $V_4$ which has index three in $A_4$.
\end{proof}

Together with the fundamental theorem of Galois theory we will use the next lemma to ensure that the Galois group of certain reciprocal polynomials $P(X)\in\mathbb{Z}[X]$ equals the whole hyperoctahedral group, i.e. $\text{Gal}(P)=G_k$.

\begin{lemma}\label{Lem:MaxSubDiscr}
Let the hyperoctahedral group $G_k=\mathbb{Z}_2^k\rtimes S_k$ act on the splitting field $Z(P)=\mathbb{Q}(\{\lambda_i,\, \lambda_i^{-1}\mid i=1,\dots,k\})$ of the polynomial $P(X)\in\mathbb{Z}[X]$ as explained in Section \ref{Rem:ActHOGroup}. Consider the two subgroups $H_{k,1}$ and $H_{k,2}$ of $G_k$ defined in Proposition \ref{Prop:SubgrPhiOnto}.
Then:
\begin{enumerate}
\item[(i)]
The expression $\delta_{k,1}:=\prod_i\Bigl(\lambda_i-\lambda_i^{-1}\Bigr)$ is invariant under the action of $H_{k,1}$ but not $G_k$.
\item[(ii)]
The expression 
$\delta_{k,2}:= \prod_{i<j}\Bigl(\lambda_i+\lambda_i^{-1}-\lambda_j-\lambda_j^{-1}\Bigr)
\prod_i \Bigl(\lambda_i-\lambda_i^{-1}\Bigr)$
is invariant under the action of $H_{k,2}$ but not $G_k$.
\end{enumerate}
\end{lemma}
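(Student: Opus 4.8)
The plan is to compute, for each of the two elementary types of element of $G_k$ acting on $Z(P)$ as in Remark~\ref{Rem:ActHOGroup}, the scalar by which it multiplies $\delta_{k,1}$ and $\delta_{k,2}$, and thereby to identify the full stabilizer of each expression inside $G_k$. Showing that these stabilizers are exactly $H_{k,1}$ and $H_{k,2}$ respectively yields at once the claimed $H_{k,j}$-invariance and the failure of $G_k$-invariance. Throughout I write $\mu_i:=\lambda_i-\lambda_i^{-1}$ and $s_i:=\lambda_i+\lambda_i^{-1}$, so that $\delta_{k,1}=\prod_i\mu_i$ and $\delta_{k,2}=\bigl(\prod_{i<j}(s_i-s_j)\bigr)\bigl(\prod_i\mu_i\bigr)$. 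Since $G_k=\mathbb{Z}_2^k\rtimes S_k$ is generated by the sign-changes $(\epsilon,\textrm{id})$ and the pure permutations $((+1,\dots,+1),\tau)$, it is enough to treat these two families and multiply the resulting signs.

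For part (i): a sign-change $(\epsilon,\textrm{id})$ replaces $\lambda_i$ by $\lambda_i^{-1}$ precisely when $\epsilon_i=-1$, hence sends $\mu_i\mapsto\epsilon_i\mu_i$ and therefore $\delta_{k,1}\mapsto\bigl(\prod_i\epsilon_i\bigr)\delta_{k,1}$; a pure permutation $((+1,\dots,+1),\tau)$ sends $\mu_i\mapsto\mu_{\tau(i)}$, so it merely permutes the factors of $\delta_{k,1}$ and fixes it. Hence a general $(\epsilon,\tau)$ multiplies $\delta_{k,1}$ by $\prod_i\epsilon_i$. Because the $2k$ roots of $P$ are distinct and genuinely split into $k$ reciprocal pairs, one has $\lambda_i\neq\lambda_i^{-1}$, so $\mu_i\neq 0$ and $\delta_{k,1}\neq 0$; thus the stabilizer of $\delta_{k,1}$ in $G_k$ is exactly $\{(\epsilon,\tau):\prod_i\epsilon_i=1\}=H_{k,1}$, a proper subgroup, which proves (i).

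For part (ii): a sign-change $(\epsilon,\textrm{id})$ fixes every $s_i$, since interchanging $\lambda_i$ with $\lambda_i^{-1}$ leaves $\lambda_i+\lambda_i^{-1}$ unchanged; hence it fixes $\prod_{i<j}(s_i-s_j)$ and, as in (i), multiplies $\delta_{k,2}$ by $\prod_i\epsilon_i$. A pure permutation $((+1,\dots,+1),\tau)$ sends $s_i\mapsto s_{\tau(i)}$ and $\mu_i\mapsto\mu_{\tau(i)}$, so by the Vandermonde sign identity $\prod_{i<j}(s_{\tau(i)}-s_{\tau(j)})=\textrm{sign}(\tau)\prod_{i<j}(s_i-s_j)$ while $\prod_i\mu_i$ is fixed; therefore $\delta_{k,2}\mapsto\textrm{sign}(\tau)\,\delta_{k,2}$. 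Combining, a general $(\epsilon,\tau)$ multiplies $\delta_{k,2}$ by $\textrm{sign}(\tau)\prod_i\epsilon_i$, so its stabilizer in $G_k$ is exactly $H_{k,2}$. For this one needs $\delta_{k,2}\neq 0$: the $\mu_i$ are non-zero as above, and the $s_i$ are pairwise distinct because $s_i-s_j=(\lambda_i-\lambda_j)(1-\lambda_i^{-1}\lambda_j^{-1})$ is non-zero for $i\neq j$, as $\lambda_i\neq\lambda_j$ and $\lambda_i\lambda_j\neq 1$ (the pairs $\{\lambda_i,\lambda_i^{-1}\}$ and $\{\lambda_j,\lambda_j^{-1}\}$ being disjoint). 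This proves (ii).

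I do not expect a genuine obstacle: the computations are elementary, and the only points that really need care are the non-vanishing of $\delta_{k,1}$ and $\delta_{k,2}$ --- without which the assertion ``not $G_k$-invariant'' would be vacuous --- together with the bookkeeping of the semidirect-product action, which is painless here since each generator family acts by a scalar $\pm 1$ so the order of composition is irrelevant. This lemma then serves, via the fundamental theorem of Galois theory, to detect $\text{Gal}(P)=G_k$: once one exhibits (using Dedekind's theorem and Subsection~\ref{SubSec:GalG45}) that $\text{Gal}(P)$ fixes neither $\delta_{k,1}$ nor $\delta_{k,2}$, then $\text{Gal}(P)$ lies in neither $H_{k,1}$ nor $H_{k,2}$, which combined with Proposition~\ref{Prop:SubgrPhiOnto} (and the exclusion of the remaining possibilities $S_k$ and $H_{k,3}$) forces $\text{Gal}(P)=G_k$.
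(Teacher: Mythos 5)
Your proposal is correct and follows essentially the same route as the paper: compute that a sign-change $(\epsilon,\mathrm{id})$ scales $\delta_{k,1}$ and $\delta_{k,2}$ by $\prod_i\epsilon_i$, while a pure permutation fixes $\delta_{k,1}$ and scales $\delta_{k,2}$ by $\mathrm{sign}(\tau)$, so that $(\epsilon,\tau)$ acts by $\prod_i\epsilon_i$ resp. $\mathrm{sign}(\tau)\prod_i\epsilon_i$. Your explicit verification that $\delta_{k,1},\delta_{k,2}\neq 0$ (via distinctness of the roots and disjointness of the reciprocal pairs), which makes the ``not $G_k$-invariant'' conclusion non-vacuous, is a small addition the paper leaves implicit in its standing assumptions on $P$.
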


\begin{proof}
For every $\tau\in S_k$ and every $i\in\{1,\dots,k\}$ we have 
  \[
  ((+1,\dots,+1),\tau)\,.\,(\lambda_i-\lambda_i^{-1})=\lambda_{\tau(i)}-\lambda_{\tau(i)}^{-1}.
  \]
This shows that $\delta_{k,1}\in Z(P)$ is invariant under the action of $S_k$.

Now consider  $(\epsilon,\text{id}).(\lambda_i-\lambda_i^{-1})$ for $\epsilon=(\epsilon_1,\dots,\epsilon_k)\in \mathbb{Z}_2^k$ and $i\in\{1,\dots,k\}$. We have 
\begin{align*}
  (\epsilon,\text{id})\,.\,(\lambda_i-\lambda_i^{-1})=\lambda_i-\lambda_i^{-1} = \epsilon_i\,(\lambda_i-\lambda_i^{-1})
  \quad &
  \mbox{if $\epsilon_i = 1$},\\
  (\epsilon,\text{id})\,.\,(\lambda_i-\lambda_i^{-1})=\lambda_i^{-1}-\lambda_i= \epsilon_i\,(\lambda_i-\lambda_i^{-1})
  \quad &
  \mbox{if $\epsilon_i = -1$}.
  \end{align*}
Furthermore $(\epsilon,\text{id}).(\lambda_i+\lambda_i^{-1})=\lambda_i+\lambda_i^{-1}$ for every $\epsilon\in\mathbb{Z}_2^k$. 

We know $\text{sgn}(\tau)=(-1)^{\text{inv}(\tau)}$ for every element $\tau\in S_k$ with $\text{inv}(\tau)\in\mathbb{N}_0$ is the number of elements $(i,j)\in\{1,\dots,k\}\times\{1,\dots,k\}$ with $i<j$ but $\tau(i)>\tau(j)$ and thus
\begin{align*}
    & ((+1,\dots,+1),\tau)\,.\,\Bigl(\prod_{i<j} (\lambda_i+\lambda_i^{-1}-\lambda_j-\lambda_j^{-1})\Bigr) \\
  = & ~\text{sgn}(\tau)\, \Bigl(\prod_{i<j} (\lambda_i+\lambda_i^{-1}-\lambda_j-\lambda_j^{-1})\Bigr).
\end{align*}
Putting this together with the arguments from above we conclude for every $(\epsilon,\tau)\in G_k=\mathbb{Z}_2^k\rtimes S_k$ where $\epsilon=(\epsilon_1,\dots,\epsilon_k)\in\{(\pm 1,\dots,\pm 1)\}$:
\begin{displaymath}
(\epsilon,\tau)\,.\,\delta_{k,1}=\left( \prod_{i=1}^k \epsilon_i \right) \, \delta_{k,1} 
\quad \text{and} \quad
(\epsilon,\tau)\,.\,\delta_{k,2}=\text{sgn}(\tau)\, \left( \prod_{i=1}^k \epsilon_i \right) \,  \delta_{k,2} 
\end{displaymath}
This proves (i) and (ii). 
\end{proof}

\begin{remark}\label{Rem:SqDelta}
For a reciprocal monic polynomial $P(X)\in\mathbb{Z}[X]$ of degree $n=2k$ with set of roots $\{\lambda_i,\,\lambda_i^{-1}\mid i=1,\dots,k\}\subset \mathbb{C}$ we can always find a polynomial $Q(Y)\in\mathbb{Z}[Y]$ such that $1/X^k\cdot P(X)=Q(X+1/X+2)$. The polynomial $Q(Y)\in\mathbb{Z}[Y]$ has distinct roots $\mu_i$ ($i=1,\dots,k$) such that without loss of generality $\mu_i=\lambda_i+\lambda_i^{-1}+2$ for all $i=1,\dots,k$. We write in the following $\Delta_{k,1}:=\delta_{k,1}^2$ and $\Delta_{k,2}:=\delta_{k,2}^2$ for the squares of the expressions $\delta_{k,1}$ and $\delta_{k,2}$ from Lemma \ref{Lem:MaxSubDiscr}. We have
\begin{enumerate}
\item[(i)]
$\Delta_{k,1} = \delta_{k,1}^2
              = \prod_{i=1}^k (\lambda_i-\lambda_i^{-1})^2=\prod_{i=1}^k\mu_i(\mu_i-4) = Q(0)\, Q(4)$
\end{enumerate}
and
\begin{enumerate}
\item[(ii)]
$\Delta_{k,2} = \delta_{k,2}^2 = \Bigl(\prod_{i<j} (\mu_i-\mu_j)\Bigr)^2       
                  \Bigl(\prod_i (\lambda_i-\lambda_i^{-1})\Bigr)^2 
               = \text{Disc}(Q)\, \Delta_{k,1}.$
\end{enumerate}
This shows $\Delta_{k,1},~\Delta_{k,2}\in\mathbb{Q}$ and delivers an easy way how to write $\Delta_{k,1}$ and $\Delta_{k,2}$ in terms of coefficients of $Q(Y)\in\mathbb{Q}[Y]$.
\end{remark}

\subsection{Real roots of cubic, quartic and quintic polynomials}\label{Sec:RealRoots}

\subsubsection{Real roots for cubic polynomials}

For a cubic polynomial $Q(X)\in\mathbb{R}[X]$ with discriminant $\text{Disc}(Q)\neq 0$ it is well known that the number of real roots can be read from the discriminant as follows. If
\begin{align}
\begin{split}
    \text{Disc}(Q)>0, &\quad \mbox{then $Q(X)$ has three real roots},\\
    \text{Disc}(Q)<0, &\quad \mbox{then $Q(X)$ has one real root and two non-real roots}.
\end{split}
\end{align}

\subsubsection{Real roots for quartic polynomials}
We have a slightly more complicated statement of this form for quartic polynomials as well, so let 
 \[
 Q(X)=X^4+a\,X^3+b\,X^2+c\,X+d\in\mathbb{R}[X]
 \]
be a real monic polynomial of degree four. By substituting $X=Y-a/4$, we get the depressed quartic polynomial
\begin{equation}\label{Eq:PolyDep}
  DQ(Y)=Y^4+q\,Y^2+rY+s\in\mathbb{R}[Y],
\end{equation}
with coefficients 
\begin{IEEEeqnarray*}{l}
  q = b-(3/8)\,a^2, \quad r= c-(1/2)\,a\,(b-(1/4)\,a^2) \quad\text{and}\\
  s = d-(3/256)\,a^4+(1/16)\,a^2\,b-(1/4)\,a\,c.  
\end{IEEEeqnarray*}

For a quartic polynomial in the depressed form as in (\ref{Eq:PolyDep}), there is an easy criterion whether the polynomial has four real roots or no real roots (see \cite{garver33}). We want to state the result and denote by $\text{Disc}(DQ)$ the discriminant of the polynomial in \ref{Eq:PolyDep} and by $F(DQ)$ the expression $F(DQ):=q^2-4s$. If we have
\begin{align}\label{Eq:DisRealRoots}
\begin{split}
    \text{Disc}(DQ)>0,~q\leq 0,     &\quad \mbox{then (\ref{Eq:PolyDep}) has no real roots},\\
    \text{Disc}(DQ)>0,~F(DQ)\leq 0, &\quad \mbox{then (\ref{Eq:PolyDep}) has no real roots},\\
    \text{Disc}(DQ)>0,~q<0,~F(DQ)<0,&\quad \mbox{then (\ref{Eq:PolyDep}) has four real roots}.\\
\end{split}
\end{align}

\subsubsection{Real roots for quintic polynomials}
We want to end this section with a criterion from \cite{Hou96} with which we can find out whether a quintic polynomial in $\mathbb{R}[X]$ has simple real roots. Let
  \[
  Q(X)=X^5+a\,X^4+b\,X^3+c\,X^2+d\,X+e\in\mathbb{R}[X].
  \]
By substituting $X=Y-a/5$ we get a depressed polynomial
\begin{align}\label{Eq:PolyDep5}
  DQ(Y)=Y^5+p\,Y^3+q\,Y^2+r\,Y+s\in\mathbb{R}[X].
\end{align}
With the help of the following four discriminants we can find out wether $DQ(Y)$ has simple real roots. We define
\begin{equation}
  \begin{split}
    \begin{IEEEeqnarraybox}[][c]{lCl} \label{Eq:Dis5RealRoots}
    F_1(DQ) & = & -p,\\
    F_2(DQ) & = & 40\,r\,p-12\,p^3-45\,q^2,\\
    F_3(DQ) & = & 12\,p^4\,r-4\,p^3\,q^2+117\,p\,r\,q^2-88\,r^2\,p^2-40\,q\,p^2\,s \\
            &   &  +125\,p\,s^2 -27\,q^4-300\,q\,r\,s+160\,r^3,\\
    F_4(DQ) & = & -1600\,q\,s\,r^3-3750\,p\,s^3\,q+2000\,p\,s^2\,r^2 -4\,p^3\,q^2\,r^2 \\     
            &   & +16\,p^3\,q^3\,s-900\,r\,s^2\,p^3 +825\,q^2\,p^2\,s^2 +144\,p\,q^2\,r^3  \\
            &   & + 2250\,q^2\,r\,s^2+16\,p^4\,r^3+108\,p^5\,s^2 -128\,r^4\,p^2-27\,q^4\,r^2 \\
            &   & +108\,q^5\,s +256\,r^5+3125\,s^4-72\,p^4\,r\,s\,q +560\,r^2\,p^2\,s\,q \\
            &   & -630\,p\,r\,q^3\,s.
    \end{IEEEeqnarraybox}
  \end{split}
\end{equation} 
In \cite{Hou96} they classified the number of real roots and their multiplicity of a depressed quintic polynomial as in (\ref{Eq:PolyDep5}) using six discriminants among which are the four discriminants from Equation (\ref{Eq:Dis5RealRoots}). We only state the for us relevant case, namely if the four discriminants $F_1(DQ)$, $F_2(DQ)$, $F_3(DQ)$ and $F_4(DQ)$ are positive then the polynomial $DQ(Y)$ in (\ref{Eq:PolyDep5}) has five simple real roots.


\section{Genus four stairs}\label{s.g4}

We found our infinite families of origamis with arithmetic Kontsevich--Zorich monodromies among ``stairs origamis'' as shown in Figure \ref{fig:OrigamiONM4}, Figure \ref{fig:OrigamiNM5} and Figure \ref{fig:OrigamiNM6}.
We used cylinder decomposition in certain directions to construct elements in the Kontsevich--Zorich monodromies of the mentioned origamis from above. Hereby we started analysing small cases of the ``stairs prototypes'' and then tried to extend them by adding a number of squares satisfying adequate arithmetic conditions to keep the same features of the cylinder decomposition. Also, we think that this kind of ``uniform'' approach based on a few directions could be generalised once $SL(2,\mathbb{Z})$-orbits of origamis are classified.

\subsection{Dehn twists in genus four}

\begin{figure}
\centering
   \begin{tikzpicture}[scale=0.8]
	\draw (0,0) rectangle (3,1);
	\draw (5,0) rectangle (6,1);
	\draw (0,1) rectangle (3,2);
	\draw (0,2) rectangle (2,3);
	\draw (0,3) rectangle (1,4);
	\draw (0,0) rectangle (1,4);
	\draw (0,6) rectangle (1,8);
	\draw (1,0) -- (1,3);
	\draw (2,0) -- (2,2);
	\draw (3,0) -- (3,1);
	\draw (5,0) -- (5,1);
	\draw (0,6) -- (1,6);
	\draw (0,7) -- (1,7);
    \draw [dashed] (3.8,0.5) -- (4.2,0.5);
    \draw [dashed] (0.5,4.8) -- (0.5,5.2);
    \draw [decorate,line width=0.5mm,decoration={brace}] (-0.5,4) --  (-0.5,8) node[pos=0.5,left=10pt,black]{$2m$};
    \draw [decorate,line width=0.5mm,decoration={brace,mirror}] (0,-1.1) --  (6,-1.1) node[pos=0.5,below=10pt,black]{$N$};
    \draw [decorate, decoration={snake}] (0,4.5) -- (1,4.5);
    \draw [decorate, decoration={snake}] (0,5.5) -- (1,5.5);
    \draw (0,4) -- (0,4.5);
    \draw (1,4) -- (1,4.5);
    \draw (0,5.5) -- (0,6);
    \draw (1,5.5) -- (1,6);    
    \draw [decorate, decoration={snake}] (3.5,0) -- (3.5,1);
    \draw [decorate, decoration={snake}] (4.5,0) -- (4.5,1);
    \draw (3,0) -- (3.5,0);
    \draw (3,1) -- (3.5,1);
    \draw (4.5,0) -- (5,0);
    \draw (4.5,1) -- (5,1);  
\end{tikzpicture}
    \caption{The origami $\mathcal{O}_{M,M}^{(4)}$.}
    \label{fig:OrigamiONM4}
\end{figure}
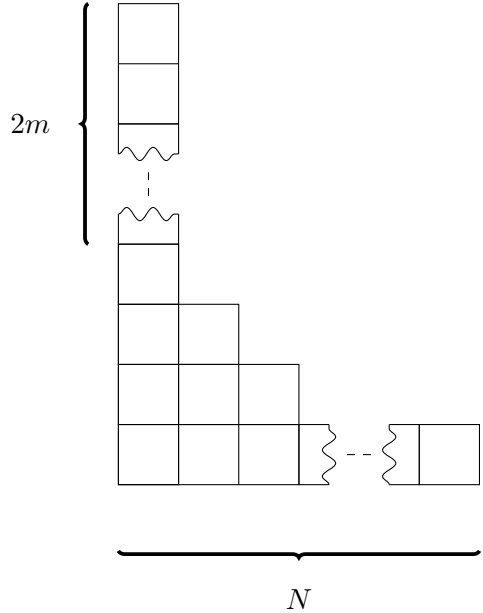

Let \(N\geq 4\) and \(M=4+2m\) with \(m\geq 0\). We consider the origami \(\mathcal{O}_{N,M}^{(4)}\) associated to the pair of permutations \(h,v \in\mbox{Sym}(\{1,\dots,N+M+2\})\), where
\begin{align*}
h=&(1,2,3\dots,N)(N+1,N+2,N+3)(N+4,N+5)(N+6)\dots(M)\\[2mm]
v=&(1,\ N+1,\ N+4,\ N+6,\dots, N+M)(2,\ N+2,\ N+5)(3,\ N+3)\\
  &(4)\dots(N).
\end{align*}
see Figure \ref{fig:OrigamiONM4}.

The Kontsevich--Zorich monodromy of \(\mathcal{O}_{N,M}^{(4)}\) was studied in Section five of \cite{bonnafoux22} via the analysis of Dehn twists in several rational directions. You can find all the details how we constructed the following matrices in Section five of the article \cite{bonnafoux22} and so we will omit this part. 

For the purpose of finding a Galois pinching element in $Sp(H_1^{(0)}(\mathcal{O}_{N,M}^{(4)},\mathbb{Z}))$, we consider the horizontal and vertical directions which lead to Dehn twists acting on an appropriate basis of $H_1^{(0)}(\mathcal{O}_{N,M}^{(4)},\mathbb{Q})$ via the matrices 
\begin{align*}
  M_h^{(0)}=&
    \left(\begin{array}{cccccc}
    1& 0& 0&   0& 3N& 3N\\
    0& 1& 0& 2N& 2N& 2N\\
    0& 0& 1& -6& -12& -6(M-1)\\
    0& 0& 0&  1&   0& 0 \\
    0& 0& 0& 0 & 1  & 0 \\
    0& 0& 0& 0 & 0  & 1 
    \end{array}\right),\\[2mm]
  M_v^{(0)}=&
    \left(\begin{array}{cccccc}
    1   &     0&       0& 0& 0& 0\\
    0   &     1&       0& 0& 0& 0\\
    0   &     0&       1& 0& 0& 0\\
    0   & -3M&    -3M& 1& 0& 0\\
    -2M& -2M&   -2M& 0& 1& 0\\
     6   &12  & 6(N-1)& 0& 0& 1
    \end{array}\right).
\end{align*}

Furthermore, in \cite[Section 5.3]{bonnafoux22} the authors showed that the directions \((1,1)\), \((1,-1)\) and \((1,2)\) provide Dehn twists acting on a non-isotropic three dimensional subspace $W\subset H_1^{(0)}(\mathcal{O}_{N,M}^{(4)},\mathbb{Q})$ via the matrices 
\begin{align*}
  D_\delta|_W &= \left(\begin{array}{ccc}1 &  a & 0 \\ 0 &  1 & 0 \\ 0 & 0 & 1 \end{array}\right),\quad 
  D_\chi|_W = \left(\begin{array}{ccc}1 &  0& 0  \\ -a&  1& 0  \\ 0 & 0 & 1 \end{array}\right)\quad\text{and}\\
  D_\gamma|_W &= \left(\begin{array}{ccc}2\frac{bc}{a}+1  &  2\frac{c^2}{a}     & 0  \\                                                                                                                                                                                                                                                                                                                                                                       
                          -2\frac{b^2}{a}  & -2\frac{bc}{a}+1    & 0 \\ 
                           \frac{b}{a}     &  \frac{c}{a}        & 1
  \end{array}\right)
\end{align*} 
with respect to an appropriate basis, where \(a=22-4N-4M\), \(b=6+3m\) and \(c=12-3N+9m\). If we choose \(c=0\) or equivalently \(N=3m+4\), then the group generated by \(D_\delta|_W,~D_\chi|_W,~D_\gamma|_W\) contains a non-trivial element of the unipotent radical of the symplectic group on \(W\), namely \((D_\chi|_W)^{-2b^2}\circ (D_\gamma|_W)^{a^2}\) is represented by
  \[
    \begin{pmatrix}
    1 & 0 & 0 \\
    0 & 1 & 0 \\
    b\,a& 0 & 1
    \end{pmatrix}.
  \] 
\subsection{Zariski density and arithmeticity for a genus four family}  
At this point, we are ready to establish the arithmeticity of the Kontsevich--Zorich monodromy of \(\mathcal{O}_{N,M}^{(4)}\) for many choices of $N$, $M$. More precisely, consider the matrix 
  \[
  A_4(N,M):=M_h^{(0)}\cdot M_v^{(0)}\in\mathbb{R}^{6\times 6}.
  \] 
The characteristical polynomial of $A_4(N,M)$ is a reciprocal, sextic polynomial
  \[
  P(X)=\chi_A(X)=X^6+a_1\,X^5+a_2\,X^4+a_3\,X^3+a_2\,X^2+a_1\,X+1\in\mathbb{Z}[X]
  \]
and the coefficients $a_1,a_2,a_3\in\mathbb{R}$ are given by
  \begin{align*}
  a_1 & = 312\,m^2 + 650\,m + 238 \\
  a_2 & = 22032\,m^4 +98280\,m^3 +146568\,m^2 +84520\,m+15743\\
  a_3 & = 279936\,m^6 +2099520\,m^5 +6161184\,m^4 +8927280\,m^3\\ 
      & +6611328\,m^2 +2317980\,m+299812
  \end{align*}
We have  $1/X^3\cdot P(X)=Q(X+1/X+2)$ for the cubic polynomial 
\begin{align}\label{Eq:CubicQ}
  Q(Y)=Y^3+ (a_1-6)\,Y^2 +(-4\,a_1+a_2+9)\,Y+2a_1-2a_2+a_3-2
\end{align}
For $m\equiv 1$ modulo $13$, the polynomials $P(X)$ and $Q(Y)$ can be written by irreducible factors modulo $13$ as 
\begin{align*}
   P(X)& \equiv x^6 + 4\,X^5 + 10\,X^4 + 6\,X^3 + 10\,X^2 + 4\,X + 1 ~\text{modulo}~13 \quad \text{and}\\
   Q(Y)& \equiv X^3 +11\,X^2 +3\,X+5 ~\text{modulo}~13.
\end{align*}

In the sequel, we will assume $m\equiv 1$ modulo $13$ and thus the polynomials $P(X)$ and $Q(Y)$ are irreducible over the rational numbers $\mathbb{Q}$. 

As in Subsection \ref{SubSec:RecHyp} we will identify the Galois group $\text{Gal}(P)$ of the reciprocal degree $6$ polynomial $P(X)\in\mathbb{Z}[X]$ with a subgroup of the hyperoctahedral group $G_3$ as well as with a subgroup of the permutation group $S_6$ (see Subsection \ref{SubSec:GalGasPG}). The discriminant $\text{Disc}(Q)$ of the polynomial $Q(Y)\in\mathbb{Z}[Y]$ has an irreducible factorization in terms of $m$ as
\begin{displaymath}
\text{Disc}(Q)=
c_9\,\left(\sum_{i=0}^8 c_i\, m^i\right)(m+2)^2 \, (3\,m+4)^2,
\end{displaymath}
with coefficients 
\begin{IEEEeqnarray*}{rClCrClCrcl}
    c_9 &=& 186624,   &\quad & c_8 &=& 1778112, &\quad& c_7&=&7832160, \\
    c_6 &=& 14307444, &\quad & c_5 &=& 13909500,&\quad& c_4&=&8133701,\\
    c_3 &=& 2980770,  &\quad & c_2 &=& 676093,  &\quad& c_1&=& 87020,\\
    c_0 &=&4900.      &      &     & &          &     &    & &
\end{IEEEeqnarray*}

By Siegel's theorem on integral points of algebraic curves (see, e.g., \cite{hindry00}), we have that the discriminant of $Q(Y)$ is a square of a rational number only for finitely many choices of $m$. This implies that $\text{Gal}(Q)=S_3$ for all but finitely many $m\in\mathbb{N}$ with $m\equiv 1$ modulo $13$. Furthermore $\text{Gal}(P)$ the Galois-group of $P(X)\in\mathbb{Z}[X]$ is a subgroup of the hyperoctahedral group $G_3=\mathbb{Z}_2^3\rtimes S_3$ such that $\text{Gal}(P)$ projects surjectively onto $S_3$.

The only non-trivial subgroups of  $G_3=\mathbb{Z}_2^3\rtimes S_3$ which project surjectively onto $S_3$ are the groups $H_{3,1},H_{3,2}$ and $H_{3,3}$ defined in Lemma \ref{Prop:SubgrPhiOnto}. 

Next we want to factorize the expressions $\Delta_{3,1}=\delta_{3,1}^2$ and $\Delta_{3,2}=\delta_{3,2}^2$ from Lemma \ref{Lem:MaxSubDiscr} in terms of $m$. We get
\begin{align*}
\Delta_{3,1}=\delta_{3,1}^2 
= c_7 ( \sum_{i=0}^6 c_i\,m^i) ( 2\,m+1)( 3\,m+1)(m+2)^2 (3\,m+4)^2
\end{align*}
with coefficients
\begin{IEEEeqnarray*}{rClCrCLCrClCrCl}
c_7 &=&165888, &\quad& c_6 &=& 8748,   &\quad& c_5&=&65610, &\quad& c_4&=&191160,\\
c_3&=&272835,  &     & c_2 &=& 197463, &     & c_1&=&67195, &     & c_0&=&8400.   
\end{IEEEeqnarray*}

and $\Delta_{3,2}=\delta_{3,2}^2=\Delta_{3,1}\cdot \text{Disc}(Q)$. 

By applying Siegel's theorem again, we see that the expressions $\delta_{3,1}$ and $\delta_{3,2}$ are not rational numbers for all but finitely many $m\in\mathbb{N}$ with $m\equiv 1$ modulo $13$. In particular, the Galois group $\text{Gal}(P)$ of $P(X)$ is not contained in the subgroups $H_{3,1}$ or $H_{3,2}$ of the hyperoctahedral group $G_3$ by the fundamental theorem of Galois theory and Lemma \ref{Lem:MaxSubDiscr}.

Furthermore, if we have $m\equiv 1$ modulo $11$, the discriminant $\text{Disc}(P)$ of $P(X)\in\mathbb{Z}[X]$ is not divisible by $11$  since $\text{Disc}(P)=9~\text{modulo}~11$ and the polynomial $P(X)\in\mathbb{Z}[X]$ can be written by irreducible factors as
  \[
  P(X)\equiv (X^2 + 10\,X + 1)(X^4 + 2\,X^3 + 8\,X^2 + 2\,X + 1)~\text{modulo}~11.
  \]
If we view $\text{Gal}(P)$ as a subgroup of $\text{Sym}(\{\lambda_i,\lambda_i^{-1}\mid i=1,2,3\})$, then Dedekind's theorem (cf. \S\ref{SubSec:GalGasPG}) says that $\text{Gal}(P)$ contains a permutation of type $(2,4)$ for $m\equiv 1$ modulo $11$.
The groups $S_3$ and $H_{3,3}\leq G_3$ on the other hand contain only non-trivial permutations of cycle type $(6), (3,3), (2,2,2)$ or $(1,1,2,2)$ (see Appendix \ref{Sect:PermTypes}). Hence $\text{Gal}(P)$ is not contained in one of the groups $S_3$, or $H_{3,3}$ of $G_3$ for $m\equiv 1$ modulo $11$.

In summary, we showed the main part of the following proposition:

\begin{proposition}\label{Prop:GalPinG4}
For all but finitely many choices of $m\in\mathbb{N}$ such that $m\equiv 1$ modulo $p$, where $p\in\{11,\,13\}$, we have that $A_4(N,M)=M_h^{(0)}\cdot M_v^{(0)}\in\mathbb{R}^{6\times 6}$ is a Galois pinching matrix.
\end{proposition}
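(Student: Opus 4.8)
The plan is to verify separately the defining properties of a Galois-pinching matrix for $A:=A_6(N,M)=M_h^{(0)}\cdot M_v^{(0)}$: that $A$ is symplectic, that its characteristic polynomial $P(X)$ is irreducible over $\mathbb{Z}$, that $P(X)$ splits over $\mathbb{R}$, and that $\text{Gal}(P)$ equals the full hyperoctahedral group $G_3=\mathbb{Z}_2^3\rtimes S_3$. Symplecticity is immediate, since $M_h^{(0)}$ and $M_v^{(0)}$ are matrices of actions of affine homeomorphisms of $\mathcal{O}_{N,M}^{(4)}$ on $H_1^{(0)}(\mathcal{O}_{N,M}^{(4)},\mathbb{Z})$ and these preserve the intersection form. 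The irreducibility of $P(X)$ over $\mathbb{Q}$ and the computation of $\text{Gal}(P)$ are exactly what the discussion preceding this statement achieves, so the bulk of the proof amounts to assembling those observations; the one point not yet addressed, the real-rootedness of $P(X)$, I treat at the end.

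For the Galois group I would argue as above. Assuming $m\equiv 1\pmod{13}$, the displayed reductions modulo $13$ show that $P(X)$ and $Q(Y)$ are irreducible over $\mathbb{Q}$, so $\text{Gal}(P)$ is a subgroup of $G_3$ and, through $\phi$, of $S_6$. Since the fixed field of $\ker\phi$ is the splitting field of $Q(Y)$ — whose roots are the $\mu_i=\lambda_i+\lambda_i^{-1}+2$ — one has $\text{Im}(\phi)=\text{Gal}(Q)$; the displayed factorization of $\text{Disc}(Q)$ in terms of $m$ together with Siegel's theorem on integral points of algebraic curves shows that $\text{Disc}(Q)$ is a rational square for only finitely many $m$, hence $\text{Gal}(Q)=S_3$, i.e.\ $\phi$ is surjective, for all but finitely many $m\equiv 1\pmod{13}$. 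By Proposition \ref{Prop:SubgrPhiOnto}, $\text{Gal}(P)$ is then one of $S_3$, $G_3$, $H_{3,1}$, $H_{3,2}$, $H_{3,3}$. The displayed factorizations of $\Delta_{3,1}$ and of $\Delta_{3,2}=\Delta_{3,1}\cdot\text{Disc}(Q)$ in $m$, with a further application of Siegel's theorem, show that $\delta_{3,1}$ and $\delta_{3,2}$ are irrational for all but finitely many such $m$, which by Lemma \ref{Lem:MaxSubDiscr} and the fundamental theorem of Galois theory rules out $H_{3,1}$ and $H_{3,2}$. Finally, imposing in addition $m\equiv 1\pmod{11}$, the reduction of $P(X)$ modulo $11$ factors into irreducibles of degrees $2$ and $4$, so by Dedekind's theorem $\text{Gal}(P)$, viewed as a subgroup of $S_6$, contains an element of cycle type $(2,4)$; since neither $S_3$ nor $H_{3,3}$ contains such an element (Appendix \ref{Sect:PermTypes}), these are excluded as well, leaving $\text{Gal}(P)=G_3$.

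It remains to show that $P(X)$ splits over $\mathbb{R}$. From $1/X^3\cdot P(X)=Q(X+1/X+2)$, every root $\lambda$ of $P$ satisfies $\lambda^2-(\mu_i-2)\lambda+1=0$ for some root $\mu_i$ of $Q$, with discriminant $(\mu_i-2)^2-4=\mu_i(\mu_i-4)$; thus $\lambda$ is real exactly when $\mu_i\le 0$ or $\mu_i\ge 4$. The displayed factorization writes $\text{Disc}(Q)$ as the product of the positive constant $c_9$, the polynomial $c_8m^8+\dots+c_0$ with nonnegative coefficients, and the squares $(m+2)^2$ and $(3m+4)^2$, so $\text{Disc}(Q)>0$ for every $m\ge 0$ and $Q$ has three distinct real roots. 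Moreover $m^{-6}Q(m^2y)$ converges coefficientwise, as $m\to\infty$, to $y^3+312y^2+22032y+279936$, a cubic with positive coefficients and hence without roots in $[0,\infty)$; its roots, being limits of the real numbers $\mu_i/m^2$, are therefore all negative, so $\mu_i<0$ for all sufficiently large $m$. In particular no $\mu_i$ lies in $(0,4)$ and all six roots of $P$ are real. Combining this with the previous paragraph, $A_6(N,M)$ is a Galois-pinching matrix for all but finitely many $m\in\mathbb{N}$ with $m\equiv 1\pmod{11}$ and $m\equiv 1\pmod{13}$.

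The main obstacle is the repeated use of Siegel's theorem. For it to apply to the curves $y^2=\text{Disc}(Q)$, $y^2=\Delta_{3,1}$ and $y^2=\Delta_{3,2}$ in the $(m,y)$-plane, one must know that the right-hand sides, regarded as polynomials in $m$, are not perfect squares up to square factors in $\mathbb{Q}[m]$ — equivalently, that the associated hyperelliptic curves have positive genus — so that only finitely many integers $m$ make these quantities rational squares. This is exactly what the displayed irreducible factorizations provide: they expose odd-multiplicity factors such as $(2m+1)$ and $(3m+1)$ in $\Delta_{3,1}$, as well as non-square leading coefficients, forcing the squarefree parts to have degree at least $3$. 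Granting these finiteness statements, everything else — the semidirect-product bookkeeping that identifies the five candidate subgroups of $G_3$, the Dedekind reductions modulo $11$ and $13$, and the asymptotic real-rootedness estimate — is routine.
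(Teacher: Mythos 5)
Your Galois-theoretic core coincides with the paper's own argument: the proof of Proposition \ref{Prop:GalPinG4} in the text is nothing but the summary of the discussion preceding it (irreducibility of $P$ and $Q$ from the reductions modulo $13$, surjectivity of $\phi$ via $\mathrm{Gal}(Q)=S_3$ and Siegel's theorem, elimination of $H_{3,1}$ and $H_{3,2}$ via $\delta_{3,1},\delta_{3,2}$, Lemma \ref{Lem:MaxSubDiscr} and the fundamental theorem of Galois theory, and elimination of $S_3$ and $H_{3,3}$ via Dedekind modulo $11$ and Appendix \ref{Sect:PermTypes}), so on that part you reproduce the paper step for step. Where you genuinely go beyond the text is the requirement that $P$ split over $\mathbb{R}$: Section \ref{s.g4} never addresses it, whereas your check is correct and welcome --- $\mathrm{Disc}(Q)>0$ from the displayed factorization gives three distinct real roots $\mu_i$, and the rescaling $m^{-6}Q(m^2y)\to y^3+312\,y^2+22032\,y+279936$ forces $\mu_i<0$ for all large $m$, hence $\mu_i(\mu_i-4)>0$ and all six roots of $P$ are real; one could shortcut the rescaling by noting that for large $m$ all coefficients of $Q$ are positive, which already excludes roots in $[0,\infty)$. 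Likewise, your remark that Siegel's theorem needs the squarefree parts of $\mathrm{Disc}(Q)$, $\Delta_{3,1}$ and $\Delta_{3,2}$, viewed as polynomials in $m$, to have degree at least three --- guaranteed by the odd-multiplicity irreducible factors in the displayed factorizations --- makes explicit a hypothesis the paper uses silently. In short: correct, same strategy as the paper, with the real-splitting verification supplied where the paper is silent.
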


\begin{proof}
    The discriminant $\text{Disc}(Q(Y))$ of the cubic polynomial $Q(Y)\in\mathbb{Z}[Y]$ from (\ref{Eq:CubicQ}) converges to infinity for growing $m$. This shows that $Q(Y)$ has three distinct real roots for $m\equiv 1$ modulo $13$ big enough. Furthermore for $m$ big enough all the coefficients of the polynomial $Q(Y)$ are positive and hence by Decarte's rule of signs the three roots $\mu_1,~\mu_2$ and $\mu_3$ of $Q(Y)$ are negative. Furthermore
      \[
      \mu_i=\lambda_i+\lambda_i^{-1}+2\quad \text{for}~i=1,2,3,
      \]
    for the six roots $\{\lambda_i,~\lambda_i^{-1}\mid i=1,2,3\}$ of $P(X)$. With $\lambda_i^{-1}=\overline{\lambda_i}/|\lambda_i|$, we conclude for the imaginary part $\text{Im}(\mu_i)$ of $\mu_i$ for every $i=1,2,3$:
    \begin{align*}
        0=\text{Im}(\mu_i)=\text{Im}(\lambda_i)(1-1/|\lambda_i|)
    \end{align*}
    This shows $|\lambda_i|=1$ or $\text{Im}(\lambda_i)=0$ for every $i=1,2,3$. Assume that $\text{Im}(\lambda_j)\neq 0$ for some $j\in\{1,2,3\}$. Then $|\lambda_j|=1$ and 
      \[
      0>\text{Re}(\mu_j)=\text{Re}(\lambda_j+\lambda_j^{-1}+2)=\text{Re}(\lambda_j)+\text{Re}(\overline{\lambda_j})+2.
      \]
    This would imply $\text{Re}(\lambda_j)<-1$ a contradiction to $|\lambda_j|=1$.

    This shows that all roots of $P(X)$ are real for $m\equiv 1$ modulo $13$ big enough. Together with the calculations on $\text{Gal}(P)$ from this section we conclude that $A_4(N,M)\in\mathbb{R}^{6\times 6}$ is a Galois pinching matrix for every natural number $m$ big enough such that $m\equiv 1$ modulo $p$, where $p\in\{11,\,13\}$.
\end{proof}

From this statement, it is not hard to show that: 

\begin{theorem}\label{t.A.g4}
The Kontsevich-Zorich monodromies of the genus four origamis $\mathcal{O}_{N,M}^{(4)}\in\mathcal{H}(6)$ with $M=2m+4$ and $N=3m+4$ are finite index subgroups of the symplectic group $\text{Sp}(H_1^{(0)}(\mathcal{O}_{N,M}^{(4)},\mathbb{Z}))$ for all but perhaps finitely many $m\in\mathbb{N}$ such that $m\equiv 1$ modulo $p$, where $p\in\{11,\,13\}$.
\end{theorem}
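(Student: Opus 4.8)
The plan is to combine the Zariski-density criterion and the Singh--Venkataramana arithmeticity criterion recalled in Subsection \ref{ss.general-strategy}, feeding them respectively with Proposition \ref{Prop:GalPinG4} and with the explicit Dehn-twist matrices $D_\delta|_W$, $D_\chi|_W$, $D_\gamma|_W$ displayed above. Throughout, $\Gamma$ denotes the Kontsevich--Zorich monodromy of $\mathcal{O}_{N,M}^{(4)}$ with $N=3m+4$ and $M=2m+4$; the Dehn (multi-)twists in the rational directions $(1,0)$, $(0,1)$, $(1,1)$, $(1,-1)$ and $(1,2)$ all lie in $\Gamma$, being induced by affine automorphisms of $\mathcal{O}_{N,M}^{(4)}$.

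First I would establish that $\Gamma$ is Zariski-dense in $\mathrm{Sp}(H_1^{(0)}(\mathcal{O}_{N,M}^{(4)},\mathbb{R}))$. For hypothesis (a) of that criterion I would simply invoke Proposition \ref{Prop:GalPinG4}, which furnishes the Galois-pinching element $A=M_h^{(0)}\cdot M_v^{(0)}\in\Gamma$ for all but finitely many $m\equiv 1$ modulo $p\in\{11,13\}$. For hypothesis (b), I would take $B:=D_\delta\in\Gamma$, the twist in direction $(1,1)$: since $D_\delta$ acts as the identity on a complement of $W$ and $D_\delta|_W-\mathrm{Id}$ has rank one, the subspace $(B-\mathrm{Id})(H_1^{(0)}(\mathcal{O}_{N,M}^{(4)},\mathbb{R}))$ is a line, which is certainly not Lagrangian inside the $6$-dimensional symplectic space $H_1^{(0)}(\mathcal{O}_{N,M}^{(4)},\mathbb{R})$. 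Then \cite[Theorem 9.10]{prasad14} and \cite[Proposition 4.3]{matheus15} yield the Zariski-density on the same set of $m$ as in Proposition \ref{Prop:GalPinG4}.

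Next I would check the hypotheses of the Singh--Venkataramana criterion \cite{singh14} using the three transvections $T_1:=D_\delta$, $T_2:=D_\chi$, $T_3:=D_\gamma\in\Gamma$ attached to the directions $(1,1)$, $(1,-1)$ and $(1,2)$. Each $T_n-\mathrm{Id}$ has rank one, and in the basis of $W$ used above its image is $\mathbb{Z}w_n$ with $w_1=(1,0,0)$, $w_2=(0,1,0)$ and, since the hypothesis $N=3m+4$ forces $c=0$ in the matrix $D_\gamma|_W$, $w_3=(0,-2b,1)$. As $\det(w_1\,|\,w_2\,|\,w_3)=1$, the lattices $\mathbb{Z}w_1,\mathbb{Z}w_2,\mathbb{Z}w_3$ span $W$, which is non-isotropic as recorded above. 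Finally, with $c=0$ the identity already displayed,
\[
(D_\chi|_W)^{-2b^2}\circ(D_\gamma|_W)^{a^2}=\begin{pmatrix}1&0&0\\0&1&0\\ba&0&1\end{pmatrix},
\]
exhibits inside $\langle T_n|_W:n=1,2,3\rangle$ a non-trivial element of the unipotent radical of $\mathrm{Sp}(W)$, the non-triviality following from $ba=-30(2m+1)(m+2)\neq 0$. Thus the Singh--Venkataramana criterion applies and $\Gamma$ has finite index in $\mathrm{Sp}(H_1^{(0)}(\mathcal{O}_{N,M}^{(4)},\mathbb{Z}))$ for every $m\equiv 1$ modulo $p\in\{11,13\}$ outside the finite exceptional set of Proposition \ref{Prop:GalPinG4}, which is precisely the assertion of the theorem.

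I expect that no further genuinely new ingredient is needed: the substantial point was Proposition \ref{Prop:GalPinG4}, i.e.\ the Dedekind-plus-Siegel argument showing that $M_h^{(0)}\cdot M_v^{(0)}$ is Galois-pinching, and that is already in hand. What remains is bookkeeping rather than difficulty, the two points deserving care being (i) that $D_\delta$, $D_\chi$, $D_\gamma$ act trivially off $W$, so the displayed $3\times3$ matrices faithfully record the action on $H_1^{(0)}(\mathcal{O}_{N,M}^{(4)},\mathbb{Z})$ and the images of the $T_n-\mathrm{Id}$ are honest rank-one sublattices, and (ii) that $W$ is non-isotropic and that the unipotent element produced above really belongs to the unipotent radical of $\mathrm{Sp}(W)$ — both of which are read off from the intersection form on $H_1^{(0)}(\mathcal{O}_{N,M}^{(4)},\mathbb{Z})$ exactly as in \cite[\S 5]{bonnafoux22}.
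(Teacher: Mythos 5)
Your proposal is correct and follows essentially the same route as the paper: the paper's proof likewise combines Proposition \ref{Prop:GalPinG4} (the Galois-pinching element $A=M_h^{(0)}M_v^{(0)}$) with the rank-one unipotent twist in direction $(1,1)$ for Zariski density, and then invokes Singh--Venkataramana using the non-isotropic subspace $W$ and the unipotent-radical element $(D_\chi|_W)^{-2b^2}\circ(D_\gamma|_W)^{a^2}$ available precisely because $N=3m+4$ forces $c=0$. Your extra bookkeeping (the spans $\mathbb{Q}w_1\oplus\mathbb{Q}w_2\oplus\mathbb{Q}w_3=W$ and $ba=-30(2m+1)(m+2)\neq 0$) just makes explicit what the paper leaves implicit.
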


\begin{proof}
One can check that the matrix $B\neq\text{Id}$ associated to an appropriate Dehn twist in the direction $(1,1)$ is a unipotent matrix such that the image
$(B-\text{Id})(\mathbb{R}^6)$ is one-dimensional and hence not a Lagrangian subspace (cf. the relevant matrix $B$ is called $M_{\delta}^{(0)}$ in Section 5.1 of \cite{bonnafoux22}). Since the matrix $A_4(N,M)$ is Galois pinching for all but finitely many choices of $m\in\mathbb{N}$ with $m\equiv 1$ modulo $p$, where $p\in\{11,\,13\}$ as in Proposition \ref{Prop:GalPinG4}. The desired result follows now from Singh--Venkataramana's arithmeticity criterion (cf. \S\ref{ss.general-strategy}).
\end{proof}

\section{Genus five stairs}\label{s.g5}

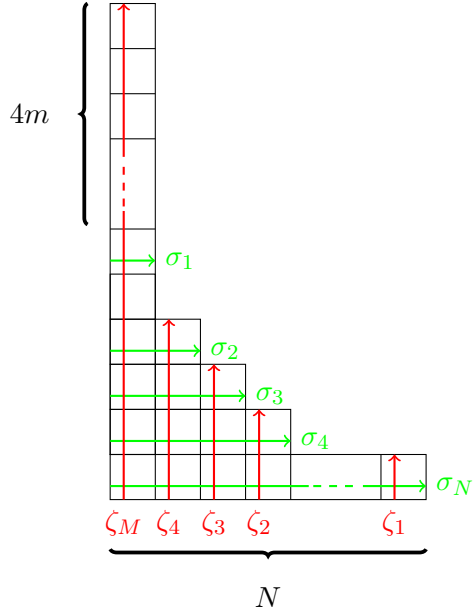
\begin{figure}
\begin{center}
\begin{tikzpicture}[scale=0.6]
	\draw (0,-1) rectangle (7,0);
	\draw (0,0) rectangle (4,1);
	\draw (0,1) rectangle (3,2);
	\draw (0,2) rectangle (2,3);
	\draw (0,3) rectangle (1,4);
	\draw (0,0) rectangle (1,10);
	\draw (1,-1) -- (1,3);
	\draw (2,-1) -- (2,2);
	\draw (3,-1) -- (3,1);
	\draw (4,-1) -- (4,0);
	\draw (6,-1) -- (6,0);
	\draw (0,5) -- (1,5);
	\draw (0,7) -- (1,7);
	\draw (0,8) -- (1,8);
	\draw (0,9) -- (1,9);
		
    
    \draw [decorate,line width=0.5mm,decoration={brace}] (-0.5,5.1) --  (-0.5,10) node[pos=0.5,left=10pt,black]{$4m$};
    \draw [decorate,line width=0.5mm,decoration={brace,mirror}] (0,-2.1) --  (7,-2.1) node[pos=0.5,below=10pt,black]{$N$};
    
     \draw[thick,color=green] (0,-0.7)--(4.4,-0.7);
     \draw[dashed,thick,color=green] (4.5,-0.7) -- (5.5,-0.7);
	\draw[thick,color=green, ->] (5.6,-0.7)--(7,-0.7);
	\node[color=green, right] (sigma7) at (7,-0.7) {\(\sigma_N\)};
	\draw[thick,color=green, ->] (0,0.3)--(4,0.3);
	\node[color=green, right] (sigma2) at (4,0.3) {\(\sigma_4\)};
	\draw[thick,color=green, ->] (0,1.3)--(3,1.3);
	\node[color=green, right] (sigma3) at (3,1.3) {\(\sigma_3\)};
	\draw[thick,color=green, ->] (0,2.3)--(2,2.3);
	\node[color=green, right] (sigma2) at (2,2.3) {\(\sigma_2\)};
	\draw[thick,color=green, ->] (0,4.3)--(1,4.3);
	\node[color=green, right] (sigma1) at (1,4.3) {\(\sigma_1\)};
	
	\draw[thick,color=red] (0.3,-1)--(0.3,5.4);
	\draw[dashed,thick,color=red] (0.3,5.5) -- (0.3,6.5);
	\draw[thick, color=red,->] (0.3,6.6) -- (0.3,10);
	\node[color=red, below] (zeta1) at (0.3,-1) {\(\zeta_M\)};
	\draw[thick,color=red, ->] (1.3,-1)--(1.3,3);
	\node[color=red, below] (zeta2) at (1.3,-1) {\(\zeta_4\)};
	\draw[thick,color=red, ->] (2.3,-1)--(2.3,2);
	\node[color=red, below] (zeta3) at (2.3,-1) {\(\zeta_3\)};
	\draw[thick,color=red, ->] (3.3,-1)--(3.3,1);
	\node[color=red, below] (zeta3) at (3.3,-1) {\(\zeta_2\)};
	\draw[thick,color=red, ->] (6.3,-1)--(6.3,0);
    \node[color=red,below] (zeta4) at (6.3,-1) {\(\zeta_1\)};
	
\end{tikzpicture}
\end{center}

\caption{Origami \(\mathcal{O}_{N,M}^{(5)}\) with horizontal waist curves \(\sigma_1,\sigma_2,\sigma_3,\sigma_4,\sigma_N\) and vertical waist curves \(\zeta_1,\zeta_2,\zeta_3,\zeta_4,\zeta_M\).}
\label{fig:OrigamiNM5}
\end{figure}

For $N,M\in\mathbb{N}$ with $M=6+4m~(m\in\mathbb{N})$, we consider the origami $\mathcal{O}^{(5)}_{N,M}$ that is given by the following horizontal and vertical permutation $h,v\in\text{Sym}(\{1,2,\dots, N+M+5\})$:
\begin{align*}
  h = &  (1,\dots, N)(N+1,\dots,N+4)(N+5,\dots,N+7)\\
      &  (N+8,N+9)(N+10)\dots(N+M+5)\\[2mm]
  v = &  (1,N+1,N+5,N+8,N+10,\dots,N+M+5)\\
      &  (2,N+2,N+6,N+9)(3,N+3,N+7)(4,N+4)(5)\dots(N)
\end{align*}
The five waist curves $\sigma_1,\dots,\sigma_4,\sigma_N$ of the maximal horizontal cylinders together with the waist curves $\zeta_1,\dots,\zeta_5,\zeta_M$ of the maximal vertical cylinders form a basis $B$ of the absolute homology $H_1(\mathcal{O}^{(5)}_{N,M},\mathbb{Q})$ of the origami $\mathcal{O}^{(5)}_{N,M}$ see Figure \ref{fig:OrigamiNM5}. With respect to the basis $B$ the symplectic intersection form $\Omega$ on $H_1(\mathcal{O}^{(5)}_{N,M},\mathbb{Q})$ has a matrix representation $M\Omega=(\Omega(\sigma_i,\zeta_j)_{i,j}$given by
\begin{displaymath}
 M\Omega=
  \left(\begin{array}{cccccccccc}
      0 & 0 & 0 & 0 & 0 & 0 & 0 & 0 & 0 & 1 \\
      0 & 0 & 0 & 0 & 0 & 0 & 0 & 0 & 1 & 1 \\
      0 & 0 & 0 & 0 & 0 & 0 & 0 & 1 & 1 & 1 \\
      0 & 0 & 0 & 0 & 0 & 0 & 1 & 1 & 1 & 1 \\
      0 & 0 & 0 & 0 & 0 & 1 & 1 & 1 & 1 & 1 \\
      0 & 0 & 0 & 0 &-1 & 0 & 0 & 0 & 0 & 0 \\
      0 & 0 & 0 &-1 &-1 & 0 & 0 & 0 & 0 & 0 \\
      0 & 0 &-1 &-1 &-1 & 0 & 0 & 0 & 0 & 0 \\
      0 &-1 &-1 &-1 &-1 & 0 & 0 & 0 & 0 & 0 \\
     -1 &-1 &-1 &-1 &-1 & 0 & 0 & 0 & 0 & 0
  \end{array}\right).
\end{displaymath}
If we compare the length of the waist curves of the five maximal horizontal and vertical cylinders of $\mathcal{O}^{(5)}_{N,M}$, we see that 
  \[
  B^{(0)}=\{\Sigma_1,\,\Sigma_2,\,\Sigma_3,\,\Sigma_N,\,Z_1,\,Z_2,\,Z_3,\,Z_M\}
  \]
is a basis of the non-tautological part $H_1^{(0)}(\mathcal{O}_{N,M}^{(5)},\mathbb{Q})$, where
\begin{IEEEeqnarray*}{rClCrClCrClCrCl}
\Sigma_1&:=&\sigma_2 - 2\,\sigma_1, &\quad& \Sigma_2 &:=&\sigma_3-3\,\sigma_1,&\quad &
\Sigma_3&:=&\sigma_4-4\,\sigma_1,   &\quad& \Sigma_N &:=&\sigma_N-N\,\sigma_1,\\
     Z_1&:=&\zeta_2 - 2\,\zeta_1,   &     &       Z_2&:=&\zeta_3-3\,\zeta_1,  &   &
     Z_3&:=&\zeta_4-4\,\zeta_1,     &     &       Z_M&:=&\zeta_M-M\,\zeta_1.
\end{IEEEeqnarray*}

If we restrict the intersection form $\Omega$ to subspace $H_1^{(0)}(\mathcal{O}_{N,M}^{(5)},\mathbb{Q})$ of the absolute homology then it can be represented by the following matrix $M\Omega^{(0)}=(\Omega|{H_1^{(0)}}(\Sigma_i,Z_j))_{i,j}$ with respect to the basis $B^{(0)}$ from above:
\begin{align*}
  M\Omega^{(0)}=
      \left(\begin{array}{cccccccc}
      0 & 0 & 0 & 0 & 0 & 0 & 1 & -1\\
      0 & 0 & 0 & 0 & 0 & 1 & 1 & -2\\
      0 & 0 & 0 & 0 & 1 & 1 & 1 & -3\\
      0 & 0 & 0 & 0 & -1 & -2 & -3 & 1-N-M\\
      0 & 0 & -1 & 1 & 0 & 0 & 0 & 0\\
      0 & -1 & -1 & 2 & 0 & 0 & 0 & 0\\
     -1 & -1 & -1 & 3 & 0 & 0 & 0 & 0\\
      1 & 2 & 3 & M+N-1 & 0 & 0 & 0 & 0
      \end{array}\right)
\end{align*}

\subsection{Dehn twists in genus five}\label{Sec:DehnTwistg=5}

Before we start with our calcaulations for the origami $\mathcal{O}_{N,M}^{(5)}$, we want to explain what we mean by length or combinatorial lenght of a curve in an origami.

\begin{remark}
    For an origami $\pi\colon\mathcal{O}\to \mathbb{R}^2/\mathbb{Z}^2$ and a curve $\gamma\colon[0,1]\to \mathcal{O}$ we mean by \text{(combinatorial) length} the number of $t\in(0,1]$ such that $\pi(\gamma(t))=\pi(\gamma(0))$.
\end{remark}

Now we can start with our calculations.
Recall that $M=6+4m$. In this case, the cylinder decompositions of $\mathcal{O}_{N,M}^{(5)}$ in the directions $(1,2)$, $(1,-2)$ and $(1,4)$ have the following structure. 


\begin{figure}
\centering
\noindent
\begin{subfigure}[b]{0.5\textwidth}
\centering
\begin{tikzpicture}[scale=0.6]
	\draw (0,-1) rectangle (4,0);
	\draw (6,-1) rectangle (7,0);
	\draw (0,-1) rectangle (3,2);
	\draw (0,-1) rectangle (2,3);
	\draw (0,-1) rectangle (1,5);
	\draw (0,-1) rectangle (4,1);
	\draw (0,7) rectangle (1,11);
	\draw (1,0) -- (1,3);
	\draw (2,0) -- (2,2);
	\draw (3,0) -- (3,1);
	\draw (0,4) -- (1,4);
	\draw (0,7) -- (1,7);
	\draw (0,8) -- (1,8);
	\draw (0,9) -- (1,9);
	\draw (0,10) -- (1,10);
    \draw [decorate,line width=0.5mm,decoration={brace}] (-0.5,-1) --  (-0.5,11) node[pos=0.5,left=10pt,black, rotate=-90]{$M=6+4m$};
    \draw [decorate,line width=0.5mm,decoration={brace,mirror}] (0,-2.1) --  (7,-2.1) node[pos=0.5,below=10pt,black]{$N$};
    \draw [dashed] (4.8,-0.5) -- (5.2,-0.5);
    \draw [dashed] (0.5,5.8) -- (0.5,6.2);
    \draw [decorate, decoration={snake}] (0,5.5) -- (1,5.5);
    \draw [decorate, decoration={snake}] (0,6.5) -- (1,6.5);
    \draw (0,5) -- (0,5.5);
    \draw (1,5) -- (1,5.5);
    \draw (0,6.5) -- (0,7);
    \draw (1,6.5) -- (1,7);
    \draw [decorate, decoration={snake}] (4.5,-1) -- (4.5,0);
    \draw [decorate, decoration={snake}] (5.5,-1) -- (5.5,0);
    \draw (4,-1) -- (4.5,-1);
    \draw (4,0) -- (4.5,0);
    \draw (5.5,-1) -- (6,-1);
    \draw (5.5,0) -- (6,0);
    \draw[pattern color=blue, pattern = north east lines] (0,-1) -- (0.5,-1) -- (2,2) -- (2,3) -- (0,-1) --
     cycle;
    \draw[pattern color=blue, pattern = north east lines] (0,2) -- (1,4) -- (1,5) -- (0,3) -- (0,2) --
    cycle;
    \draw[pattern color=blue, pattern = north east lines] (0,4) -- (0.5,5) -- (0,5) -- (0,4) -- 
    cycle;
     \draw[pattern color=blue, pattern = north east lines] (0,7) -- (0.5,7) -- (1,8) -- (1,9) -- (0,7) --
    cycle;
     \draw[pattern color=blue, pattern = north east lines] (0,8) -- (1,10) -- (1,11) -- (0,9) -- (0,8) --
    cycle;
     \draw[pattern color=blue, pattern = north east lines] (0,10) -- (0.5,11) -- (0,11) -- (0,10) -- 
    cycle;
   \draw[thick, color=red] (0,10) -- (0.5,11);
   \draw[thick, color=red] (0,9) -- (1,11);
   \draw[thick, color=red] (0,8) -- (1,10);
   \draw[thick, color=red] (0,7) -- (1,9);
   \draw[thick, color=red] (0.5,7) -- (1,8);
   \draw[thick, color=red] (0,4) -- (0.5,5);
   \draw[thick, color=red] (0,3) -- (1,5);
   \draw[thick, color=red] (0,2) -- (1,4);
   \draw[thick, color=red] (0,1) -- (1,3);
   \draw[thick, color=red] (0,0) -- (1.5,3);
   \draw[thick, color=red] (0,-1) -- (2,3);
   \draw[thick, color=red] (0.5,-1) -- (2,2);
   \draw[thick, color=red] (1,-1) -- (2.5,2);
   \draw[thick, color=red] (1.5,-1) -- (3,2);
   \draw[thick, color=red] (2,-1) -- (3,1);
   \draw[thick, color=red] (2.5,-1) -- (3.5,1);
   \draw[thick, color=red] (3,-1) -- (4,1);
   \draw[thick, color=red] (3.5,-1) -- (4,0);
   \draw[thick, color=red] (6,-1) -- (6.5,0);
   \draw[thick, color=red] (6.5,-1) -- (7,0);
\end{tikzpicture}
\caption{}\label{fig:G5cylinder(1,2)}
\end{subfigure}%
\hfill
\begin{subfigure}[b]{0.5\textwidth}
\centering
\begin{tikzpicture}[scale=0.6]
	\draw (0,-1) rectangle (4,0);
	\draw (6,-1) rectangle (7,0);
	\draw (0,-1) rectangle (3,2);
	\draw (0,-1) rectangle (2,3);
	\draw (0,-1) rectangle (1,5);
	\draw (0,-1) rectangle (4,1);
	\draw (0,7) rectangle (1,11);
	\draw (1,0) -- (1,3);
	\draw (2,0) -- (2,2);
	\draw (3,0) -- (3,1);
	\draw (0,4) -- (1,4);
	\draw (0,7) -- (1,7);
	\draw (0,8) -- (1,8);
	\draw (0,9) -- (1,9);
	\draw (0,10) -- (1,10);
    \draw [decorate,line width=0.5mm,decoration={brace}] (-0.5,-1) --  (-0.5,11) node[pos=0.5,left=10pt,black,rotate=-90]{$M=6+4m$};
    \draw [decorate,line width=0.5mm,decoration={brace,mirror}] (0,-2.1) --  (7,-2.1) node[pos=0.5,below=10pt,black]{$N$};
    \draw [dashed] (4.8,-0.5) -- (5.2,-0.5);
    \draw [dashed] (0.5,5.8) -- (0.5,6.2);
    \draw [decorate, decoration={snake}] (0,5.5) -- (1,5.5);
    \draw [decorate, decoration={snake}] (0,6.5) -- (1,6.5);
    \draw (0,5) -- (0,5.5);
    \draw (1,5) -- (1,5.5);
    \draw (0,6.5) -- (0,7);
    \draw (1,6.5) -- (1,7);
    \draw [decorate, decoration={snake}] (4.5,-1) -- (4.5,0);
    \draw [decorate, decoration={snake}] (5.5,-1) -- (5.5,0);
    \draw (4,-1) -- (4.5,-1);
    \draw (4,0) -- (4.5,0);
    \draw (5.5,-1) -- (6,-1);
    \draw (5.5,0) -- (6,0);
    \draw[pattern color=blue, pattern = north east lines] (0,3) -- (0,2) -- (1.5,-1) -- (2,-1) -- (0,3) --
     cycle;
     \draw[pattern color=blue, pattern = north east lines] (1.5,3) -- (2,2) -- (2,3) -- (1.5,3) --
     cycle;
   \draw[thick, color=red] (0.5,11) -- (1,10);
   \draw[thick, color=red] (0,11) -- (1,9);
   \draw[thick, color=red] (0,10) -- (1,8);
   \draw[thick, color=red] (0,9) -- (1,7);
   \draw[thick, color=red] (0,8) -- (0.5,7);
   \draw[thick, color=red] (0.5,5) -- (1,4);
   \draw[thick, color=red] (0,5) -- (3,-1);
   \draw[thick, color=red] (0,4) -- (2.5,-1);
   \draw[thick, color=red] (0,3) -- (2,-1);
   \draw[thick, color=red] (0,2) -- (1.5,-1);
   \draw[thick, color=red] (0,1) -- (1,-1);
   \draw[thick, color=red] (0,0) -- (0.5,-1);
   \draw[thick, color=red] (1.5,3) -- (3.5,-1);
   \draw[thick, color=red] (2.5,2) -- (4,-1);
   \draw[thick, color=red] (3.5,1) -- (4,0);
   \draw[thick, color=red] (6,0) -- (6.5,-1);
   \draw[thick, color=red] (6.5,0) -- (7,-1);
\end{tikzpicture}
		\caption{}\label{fig:G5cylinder(1,-2)}
	\end{subfigure}
	\caption{Cylinder decomposition in direction $(1,2)$ and direction $(1,-2)$ of the origami $\mathcal{O}_{N,M}^{(5)}$. Here $\gamma_1$ is the waist curve of the blue cylinder in direction $(1,2)$ and $\alpha_1$ is the waist curve of the blue cylinder in direction $(1,-2)$.}
\end{figure}
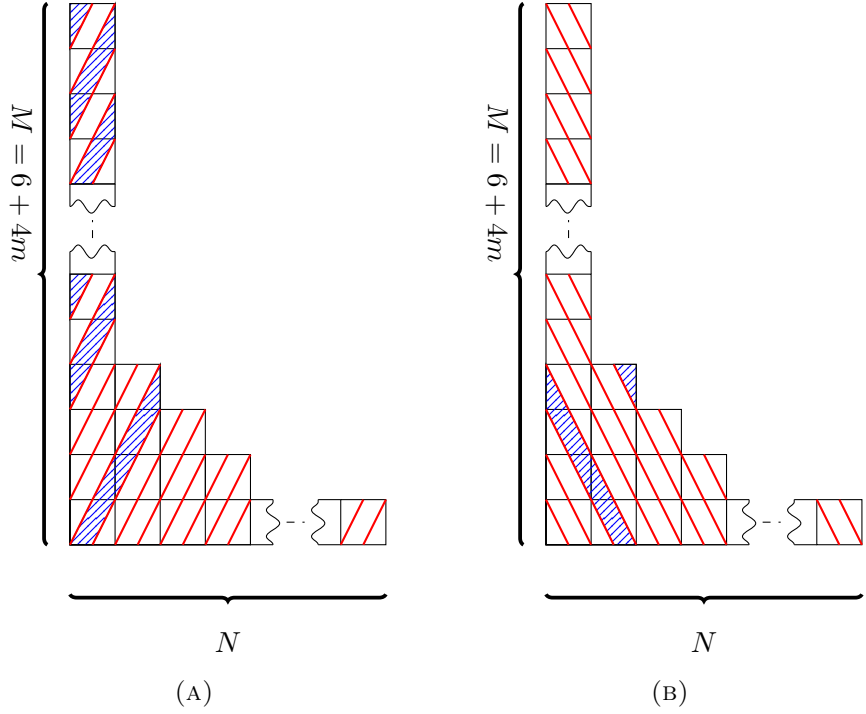


In the direction $(1,2)$, we find a waist curve $\gamma_1$ of length $3+2m$ and a waist curve $\gamma_2$ of length $8+N+2m$. Thus, $\Gamma:=(8+N+2m)\gamma_1-(3+2m)\gamma_2\in H_1^{(0)}(\mathcal{O}_{N,M}^{(5)},\mathbb{Z})$ and this direction yields a transvection
  \[
  D_\gamma\colon v\longmapsto v +(8+N+2m)\,\Omega(\gamma_1,v)\,\gamma_1 
                                  +(3+2m)\,\Omega(\gamma_2,v)\,\gamma_2.
  \]
  For later reference, let us observe that: 
\begin{IEEEeqnarray*}{lClClClClCl}
\Omega(\gamma_1,\sigma_1) &=&-1, &\quad& \Omega(\gamma_1,\sigma_2)&=&-1, &\quad&    
\Omega(\gamma_1,\sigma_3) &=&-1, \\
 \Omega(\gamma_1,\sigma_4)&=&-1, &\quad& \Omega(\gamma_1,\sigma_N)&=&-1, &\quad&  && \\          
\Omega(\gamma_1,\zeta_1) &=& 0,  &\quad& \Omega(\gamma_1,\zeta_2) &=& 0, &\quad&
\Omega(\gamma_1,\zeta_3) &=&0,   \\
 \Omega(\gamma_1,\zeta_4) &=&1, &\quad& \Omega(\gamma_1,\zeta_M) &=&2+2m &\quad&  && \\
\end{IEEEeqnarray*}
and
\begin{IEEEeqnarray*}{lClClClClCl}
\Omega(\gamma_2,\sigma_1)&=&-1,  &\quad& \Omega(\gamma_2,\sigma_2)&=&-3, &\quad& 
\Omega(\gamma_2,\sigma_3)&=&-5,  \\
\Omega(\gamma_2,\sigma_4)&=&-7,  &\quad& \Omega(\gamma_2,\sigma_N)&=&1-2N, &\quad&  & &\\          
\Omega(\gamma_2,\zeta_1) &=&1,   &\quad& \Omega(\gamma_2,\zeta_2) &=&2,    &\quad&  
\Omega(\gamma_2,\zeta_3) &=&3,   \\
\Omega(\gamma_2,\zeta_4) &=&3,   &\quad& \Omega(\gamma_2,\zeta_M) &=&4+2m  &\quad&  & &\\
\end{IEEEeqnarray*}

We can write $\Gamma\in H_1^{(0)}(\mathcal{O}_{N,M},\mathbb{Z})$ as a linear combination of elements of $B^{(0)}$ in the following way:
\begin{align*}
 \Gamma=&-(N+2m+8)\,\Sigma_1+(2m+3)\,\Sigma_2+(2m+3)\,\Sigma_3+(2m+3)\,\Sigma_N \\
        &+M\;Z_1+M\,Z_2+M\,Z_3-(N+5)\,Z_M
\end{align*}
Furthermore we calculate
\begin{equation}
    \begin{split}
    \begin{IEEEeqnarraybox}[][c]{rClCrCl}\label{Eq:DTgamma}
    D_\gamma(\Sigma_1) &=&\Sigma_1+ \Gamma, &\quad& D_\gamma(\Sigma_2)&=& \Sigma_2+ 2\Gamma, \\
    D_\gamma(\Sigma_3) &=&\Sigma_3+3\Gamma, &\quad& D_\gamma(\Sigma_N)&=& \Sigma_N+(N-1)\Gamma, \\
    D_\gamma(Z_1)      &=& Z_1,             &\quad& D_\gamma(Z_2)     &=& Z_2,  \\
    D_\gamma(Z_3)      &=& Z_3+\Gamma,      &\quad& D_\gamma(Z_M)     &=& Z_M+ (2+2m)\Gamma.
    \end{IEEEeqnarraybox}
    \end{split}
\end{equation}

In the direction $(1,-2)$, we have a waist curve $\alpha_1$ of length $2$ and a waist curve $\alpha_2$ of length $9+N+4m$. This yields a transvection
  \[
  D_\alpha\colon v \longmapsto v + (9+N+4m)\,\Omega(\alpha_1,v)\,\alpha_1
                                         +2\,\Omega(\alpha_2,v)\,\alpha_2.
  \] 
Again, for later reference, we note that: 
\begin{IEEEeqnarray*}{lClClClClCl}
\Omega(\alpha_1,\sigma_1)&=& 0, &\quad& \Omega(\alpha_1,\sigma_2)&=& 1, &\quad&
\Omega(\alpha_1,\sigma_3)&=& 1, \\
\Omega(\alpha_1,\sigma_4)&=& 1, &\quad& \Omega(\alpha_1,\sigma_N)&=&1, &\quad&  & &  \\
\Omega(\alpha_1,\zeta_1) &=& 0, &\quad& \Omega(\alpha_1,\zeta_2) &=&0, &\quad&
\Omega(\alpha_1,\zeta_3) &=& 0,  \\  
\Omega(\alpha_1,\zeta_4) &=& 1, &\quad& \Omega(\alpha_1,\zeta_M)&=&1,  &\quad&  & & 
\end{IEEEeqnarray*}
and
\begin{IEEEeqnarray*}{lClClClClCl}
\Omega(\alpha_2,\sigma_1)&=&2,  &\quad& \Omega(\alpha_2,\sigma_2)&=&3, &\quad&  
\Omega(\alpha_2,\sigma_3)&=&5,  \\
\Omega(\alpha_2,\sigma_4)&=&7,  &\quad& \Omega(\alpha_2,\sigma_N)&=&2N-1, &\quad& & & \\            \Omega(\alpha_2,\zeta_1) &=&1,  &\quad& \Omega(\alpha_2,\zeta_2) &=&2,    &\quad&
\Omega(\alpha_2,\zeta_3) &=&3,  \\
\Omega(\alpha_2,\zeta_4) &=&3,  &\quad& \Omega(\alpha_2,\zeta_M) &=&5+4m, &\quad& & &
\end{IEEEeqnarray*}
We can write $A=(9+N+4M)\alpha_1-2\alpha_2\in H_1^{(0)}(\mathcal{O}_{N,M}^{(5)},\mathbb{Z})$ as a linear combination of elements of $B^{(0)}$ as 
  \begin{displaymath}
  A=-(9+N+4m)\Sigma_1+2\,\Sigma_2+2\,\Sigma_3+2\Sigma_N-4\,Z_1-4\,Z_2+(7+N+4m)\,Z_3-4\,Z_M.
  \end{displaymath}
We calculate
\begin{equation}
    \begin{split}
    \begin{IEEEeqnarraybox}[][c]{rClCrCl}\label{Eq:DTalpha}
    D_\alpha(\Sigma_1) & = & \Sigma_1 + A, &\quad&  D_\alpha(\Sigma_2) & = & \Sigma_2 + A, \\
    D_\alpha(\Sigma_3) & = & \Sigma_3 + A, &     &  D_\alpha(\Sigma_N) & = & \Sigma_N + A, \\
    D_\alpha(Z_1)      & = & Z_1 ,         &     &  D_\alpha(Z_2)      & = & Z_2, \\
    D_\alpha(Z_3)      & = & Z_3 + A,      &     &  D_\alpha(Z_M)      & = & Z_M + A.
   \end{IEEEeqnarraybox}
   \end{split}
\end{equation}

This leads to a matrix representation $M_\alpha^{(0)}$ of $D_\alpha$on $H_1^{(0)}(\mathcal{O}_{N,M}^{(5)},\mathbb{Q})$ with respect to the basis $B^{(0)}$. It can be seen in Appendix \ref{Sec:DehnTwist}.


\begin{figure}[htb]
\centering
\begin{tikzpicture}[scale=0.6]
	\draw (0,-1) rectangle (4,0);
	\draw (6,-1) rectangle (7,0);
	\draw (0,-1) rectangle (3,2);
	\draw (0,-1) rectangle (2,3);
	\draw (0,-1) rectangle (1,5);
	\draw (0,-1) rectangle (4,1);
	\draw (0,7) rectangle (1,11);
	\draw (1,0) -- (1,3);
	\draw (2,0) -- (2,2);
	\draw (3,0) -- (3,1);
	\draw (0,4) -- (1,4);
	\draw (0,7) -- (1,7);
	\draw (0,8) -- (1,8);
	\draw (0,9) -- (1,9);
	\draw (0,10) -- (1,10);
    \draw [decorate,line width=0.5mm,decoration={brace}] (-0.5,-1) --  (-0.5,11) node[pos=0.5,left=10pt,black]{$M=6+4m$};
    \draw [decorate,line width=0.5mm,decoration={brace,mirror}] (0,-2.1) --  (7,-2.1) node[pos=0.5,below=10pt,black]{$N$};
    \draw [dashed] (4.8,-0.5) -- (5.2,-0.5);
    \draw [dashed] (0.5,5.8) -- (0.5,6.2);
    \draw [decorate, decoration={snake}] (0,5.5) -- (1,5.5);
    \draw [decorate, decoration={snake}] (0,6.5) -- (1,6.5);
    \draw (0,5) -- (0,5.5);
    \draw (1,5) -- (1,5.5);
    \draw (0,6.5) -- (0,7);
    \draw (1,6.5) -- (1,7);
    \draw [decorate, decoration={snake}] (4.5,-1) -- (4.5,0);
    \draw [decorate, decoration={snake}] (5.5,-1) -- (5.5,0);
    \draw (4,-1) -- (4.5,-1);
    \draw (4,0) -- (4.5,0);
    \draw (5.5,-1) -- (6,-1);
    \draw (5.5,0) -- (6,0);
    \draw[pattern color=blue, pattern = north east lines] (0,10) -- (0,9) -- (0.5,11) -- (0.25,11) -- (0,10)    --cycle;
    \draw[pattern color=blue, pattern = north east lines] (0.25,7) -- (0.5,7) -- (1,9) -- (1,10) -- (0.25,7)    --cycle;
    \draw[pattern color=blue, pattern = north east lines] (0,3) -- (0.5,5) -- (0.25,5) -- (0,4) -- (0,3)    --cycle;
    \draw[pattern color=blue, pattern = north east lines] (0,-1) -- (1,3) -- (1,4) -- (0,0) -- (0,-1)    --cycle;
    \draw[pattern color=blue, pattern = north east lines] (0.25,-1) -- (0.5,-1) -- (1.5,3) -- (1.25,3) -- (0.25,-1) --cycle;
    \draw[pattern color=blue, pattern = north east lines] (1.25,-1) -- (1.5,-1) -- (2.25,2) -- (2,2) -- (1.25,-1) --cycle;
    \draw[pattern color=blue, pattern = north east lines] (2,-1) -- (2.25,-1) -- (3,2) -- (2.75,2) -- (2,-1) --cycle;
    \draw[pattern color=blue, pattern = north east lines] (2.75,-1) -- (3,-1) -- (3.5,1) -- (3.25,1) -- (2.75,-1) --cycle;
    \draw[pattern color=blue, pattern = north east lines] (3.25,-1) -- (3.5,-1) -- (4,1) -- (3.75,1) -- (3.25,-1) --cycle;
    \draw[pattern color=blue, pattern = north east lines] (3.75,-1) -- (4,-1) -- (4,0) -- (3.75,-1) --cycle;
    \draw[pattern color=blue, pattern = north east lines] (6,-1) -- (6.25,-0) -- (6,0) -- (6,-1) --cycle;
    \draw[pattern color=blue, pattern = north east lines] (6,-1) -- (6.25,-1) -- (6.5,0) -- (6.25,0) -- (6,-1) --cycle;
    \draw[pattern color=blue, pattern = north east lines] (6.25,-1) -- (6.5,-1) -- (6.75,0) -- (6.5,0) -- (6.25,-1) --cycle;
    \draw[pattern color=blue, pattern = north east lines] (6.5,-1) -- (6.75,-1) -- (7,0) -- (6.75,0) -- (6.5,-1) --cycle;
    \draw[pattern color=blue, pattern = north east lines] (6.75,-1) -- (7,-1) -- (7,0) -- (6.75,-1) --cycle;
   \draw[thick, color=red] (0,10) -- (0.25,11);
   \draw[thick, color=red] (0,9) -- (0.5,11);
   \draw[thick, color=red] (0,8) -- (0.75,11);
   \draw[thick, color=red] (0,7) -- (1,11);
   \draw[thick, color=red] (0.25,7) -- (1,10);
   \draw[thick, color=red] (0.5,7) -- (1,9);
   \draw[thick, color=red] (0.75,7) -- (1,8);
   \draw[thick, color=red] (0,4) -- (0.25,5);
   \draw[thick, color=red] (0,3) -- (0.5,5);
   \draw[thick, color=red] (0,2) -- (0.75,5);
   \draw[thick, color=red] (0,1) -- (1,5);
   \draw[thick, color=red] (0,0) -- (1,4);
   \draw[thick, color=red] (0,-1) -- (1,3);
   \draw[thick, color=red] (0.25,-1) -- (1.25,3);
   \draw[thick, color=red] (0.5,-1) -- (1.5,3);
   \draw[thick, color=red] (0.75,-1) -- (1.75,3);
   \draw[thick, color=red] (1,-1) -- (2,3);
   \draw[thick, color=red] (1.25,-1) -- (2,2);
   \draw[thick, color=red] (1.5,-1) -- (2.25,2);
   \draw[thick, color=red] (1.75,-1) -- (2.5,2);
   \draw[thick, color=red] (2,-1) -- (2.75,2);
   \draw[thick, color=red] (2.25,-1) -- (3,2);
   \draw[thick, color=red] (2.5,-1) -- (3,1);
   \draw[thick, color=red] (2.75,-1) -- (3.25,1);
   \draw[thick, color=red] (3,-1) -- (3.5,1);
   \draw[thick, color=red] (3.25,-1) -- (3.75,1);
   \draw[thick, color=red] (3.5,-1) -- (4,1);
   \draw[thick, color=red] (3.75,-1) -- (4,0);
\end{tikzpicture}
\caption{Origami \(\mathcal{O}^{(5)}_{N,M}\) with cylinder decomposition in direction \((1,4)\). Here $\chi_1$ is the waist curve of the blue cylinder.}
\label{fig:G5cylinder(1,4)}
\end{figure}


Finally, in the direction $(1,4)$, we find a waist curve $\chi_1$ of length $1+N+m$ and a waist curve $\chi_2$ of length $10+3m$. The transvection associated to this direction is: 
  \[
  D_\chi\colon v \longmapsto v + (10+3m)\,\Omega(\chi_1,v)\,\chi_1
                                +(1+N+m)\,\Omega(\chi_2,v)\,\chi_2
  \]
  Also, let us remark that: 
\begin{equation}
  \begin{split}
  \begin{IEEEeqnarraybox}[]{lClClClClCl}
    \Omega(\chi_1,\sigma_1)&=&-1, &\quad& \Omega(\chi_1,\sigma_2)&=&-2,        &\quad&
    \Omega(\chi_1,\sigma_3)&=&-4, \\
    \Omega(\chi_1,\sigma_4)&=&-6, &\quad& \Omega(\chi_1,\sigma_N)&=&-6-4(N-4), &\quad&  && \\
    \Omega(\chi_1,\zeta_1) &=& 1,  &\quad& \Omega(\chi_1,\zeta_2) &=&1,          &\quad&
    \Omega(\chi_1,\zeta_3) &=&1, \\
    \Omega(\chi_1,\zeta_4) &=&1,  &\quad&\Omega(\chi_1,\zeta_M)&=&2+m,           &\quad& &&
 \end{IEEEeqnarraybox}
\end{split}
\end{equation}
as well as
\begin{equation}
  \begin{split}
  \begin{IEEEeqnarraybox}[]{lClClClClCl}
    \Omega(\chi_2,\sigma_1)&=&-3, &\quad& \Omega(\chi_2,\sigma_2)&=&-6,    &\quad& 
    \Omega(\chi_2,\sigma_3)&=&-8,   \\
    \Omega(\chi_2,\sigma_4)&=&-10,  &\quad& \Omega(\chi_2,\sigma_N)&=&-10, &\quad& && \\            \Omega(\chi_2,\zeta_1) &=&0,    &\quad& \Omega(\chi_2,\zeta_2)&=&1,    &\quad&
    \Omega(\chi_2,\zeta_3) &=&2,    \\
    \Omega(\chi_2,\zeta_4) &=&3,    &\quad&  \Omega(\chi_2,\zeta_M)&=&4+3m.  &\quad& &&               \end{IEEEeqnarraybox}
  \end{split}
\end{equation}
We can write $X=(3m+10)\,\chi_1-(N+m+1)\,\chi_2\in H_1^{(0)}(\mathcal{O}_{N,M}^{(5)},\mathbb{Z})$ as a linear combination of elements of $B^{(0)}$ as
  \begin{align*}
  X=&\quad (N+m+1)\,\Sigma_1+(N+m+1)\,\Sigma_2+(N+m+1)\Sigma_3-(3m+10)\Sigma_N\\
    &+ (2N-4m-18)\,Z_1+(2N-4m-18)\,Z_2+(3N-7)\,Z_3+ (3N-7)\, Z_M.
  \end{align*}
We calculate for the image of $B^{(0)}$ under $D_\chi$:
\begin{equation}
  \begin{split}
    \begin{IEEEeqnarraybox}[][c]{lClClCl}\label{Eq:DTchi}
    D_\chi(\Sigma_1) & = & \Sigma_1,     &\quad& D_\chi(\Sigma_2) & = & \Sigma_2 - X, \\
    D_\chi(\Sigma_3) & = & \Sigma_3-2X,  &     & D_\chi(\Sigma_N) & = & \Sigma_N - (3N-10)X, \\
    D_\chi(Z_1)      & = & Z_1-X,         &     & D_\chi(Z_2)      & = & Z_2-2X,   \\
    D_\chi(Z_3)      & = & Z_3-3X,        &     & D_\chi(Z_M)      & = & Z_M-(4+3m)X.  
    \end{IEEEeqnarraybox}
  \end{split}
\end{equation}


\begin{figure}
\centering
\noindent
\begin{subfigure}[b]{0.5\textwidth}
\centering
\begin{tikzpicture}[scale=0.6]
	\draw (0,-1) rectangle (4,0);
	\draw (6,-1) rectangle (7,0);
	\draw (0,-1) rectangle (3,2);
	\draw (0,-1) rectangle (2,3);
	\draw (0,-1) rectangle (1,5);
	\draw (0,-1) rectangle (4,1);
	\draw (0,7) rectangle (1,11);
	\draw (1,0) -- (1,3);
	\draw (2,0) -- (2,2);
	\draw (3,0) -- (3,1);
	\draw (0,4) -- (1,4);
	\draw (0,7) -- (1,7);
	\draw (0,8) -- (1,8);
	\draw (0,9) -- (1,9);
	\draw (0,10) -- (1,10);
    \draw [decorate,line width=0.5mm,decoration={brace}] (-0.5,-1) --  (-0.5,11) node[pos=0.5,left=10pt,black,rotate=-90]{$M=6+4m$};
    \draw [decorate,line width=0.5mm,decoration={brace,mirror}] (0,-2.1) --  (7,-2.1) node[pos=0.5,below=10pt,black]{$N$};
    \draw [dashed] (4.8,-0.5) -- (5.2,-0.5);
    \draw [dashed] (0.5,5.8) -- (0.5,6.2);
    \draw [decorate, decoration={snake}] (0,5.5) -- (1,5.5);
    \draw [decorate, decoration={snake}] (0,6.5) -- (1,6.5);
    \draw (0,5) -- (0,5.5);
    \draw (1,5) -- (1,5.5);
    \draw (0,6.5) -- (0,7);
    \draw (1,6.5) -- (1,7);
    \draw [decorate, decoration={snake}] (4.5,-1) -- (4.5,0);
    \draw [decorate, decoration={snake}] (5.5,-1) -- (5.5,0);
    \draw (4,-1) -- (4.5,-1);
    \draw (4,0) -- (4.5,0);
    \draw (5.5,-1) -- (6,-1);
    \draw (5.5,0) -- (6,0);
    \draw[pattern color=yellow, pattern = north east lines] (0,-1) -- (1,-1) -- (1,5) -- (0,5) -- (0,-1) --
     cycle;
     \draw[pattern color=yellow, pattern = north east lines] (0,7) -- (1,7) -- (1,11) -- (0,11) -- (0,7) --
     cycle;
    \draw[pattern color=orange, pattern = north east lines] (1,-1) -- (2,-1) -- (2,3) -- (1,3) -- (1,-1) --
     cycle;
    \draw[pattern color=green, pattern = north east lines] (2,-1) -- (3,-1) -- (3,2) -- (2,2) -- (2,-1) --
     cycle;
    \draw[pattern color=red, pattern = north east lines] (3,-1) -- (4,-1) -- (4,1) -- (3,1) -- (3,-1) --
     cycle;
   \draw[pattern color=brown, pattern = north east lines] (6,-1) -- (7,-1) -- (7,0) -- (6,0) -- (6,-1) --
     cycle;
\end{tikzpicture}
\caption{}\label{fig:G5cylinder(0,1)}
\end{subfigure}%
\hfill
\begin{subfigure}[b]{0.5\textwidth}
\centering
\begin{tikzpicture}[scale=0.6]
	\draw (0,-1) rectangle (4,0);
	\draw (6,-1) rectangle (7,0);
	\draw (0,-1) rectangle (3,2);
	\draw (0,-1) rectangle (2,3);
	\draw (0,-1) rectangle (1,5);
	\draw (0,-1) rectangle (4,1);
	\draw (0,7) rectangle (1,11);
	\draw (1,0) -- (1,3);
	\draw (2,0) -- (2,2);
	\draw (3,0) -- (3,1);
	\draw (0,4) -- (1,4);
	\draw (0,7) -- (1,7);
	\draw (0,8) -- (1,8);
	\draw (0,9) -- (1,9);
	\draw (0,10) -- (1,10);
    \draw [decorate,line width=0.5mm,decoration={brace}] (-0.5,-1) --  (-0.5,11) node[pos=0.5,left=10pt,black,rotate=-90]{$M=6+4m$};
    \draw [decorate,line width=0.5mm,decoration={brace,mirror}] (0,-2.1) --  (7,-2.1) node[pos=0.5,below=10pt,black]{$N$};
    \draw [dashed] (4.8,-0.5) -- (5.2,-0.5);
    \draw [dashed] (0.5,5.8) -- (0.5,6.2);
    \draw [decorate, decoration={snake}] (0,5.5) -- (1,5.5);
    \draw [decorate, decoration={snake}] (0,6.5) -- (1,6.5);
    \draw (0,5) -- (0,5.5);
    \draw (1,5) -- (1,5.5);
    \draw (0,6.5) -- (0,7);
    \draw (1,6.5) -- (1,7);
    \draw [decorate, decoration={snake}] (4.5,-1) -- (4.5,0);
    \draw [decorate, decoration={snake}] (5.5,-1) -- (5.5,0);
    \draw (4,-1) -- (4.5,-1);
    \draw (4,0) -- (4.5,0);
    \draw (5.5,-1) -- (6,-1);
    \draw (5.5,0) -- (6,0);
    \draw[pattern color=brown, pattern = north east lines] (0,-1) -- (4,-1) -- (4,0) -- (0,0) -- (0,-1) -- cycle;
     \draw[pattern color=brown, pattern = north east lines] (6,-1) -- (7,-1) -- (7,0) -- (6,0) -- (6,-1) -- cycle;
    \draw[pattern color=yellow, pattern = north east lines] (0,0) -- (4,0) -- (4,1) -- (0,1) -- (0,0) -- cycle;
   \draw[pattern color=red, pattern = north east lines] (0,1) -- (3,1) -- (3,2) -- (0,2) -- (0,1) -- cycle;  
   \draw[pattern color=green, pattern = north east lines] (0,2) -- (2,2) -- (2,3) -- (0,3) -- (0,2) -- cycle; 
   \draw[pattern color=orange, pattern = north east lines] (0,3) -- (1,3) -- (1,5) -- (0,5) -- (0,3) -- cycle; 
   \draw[pattern color=orange, pattern = north east lines] (0,7) -- (1,7) -- (1,11) -- (0,11) -- (0,7) -- cycle;
\end{tikzpicture}	
\caption{}\label{fig:G5cylinder(1,0)}
\end{subfigure}
\caption{Origami \(\mathcal{O}^{(5)}_{N,M}\) with cylinder decomposition in vertical and horizontal direction.}
\end{figure}
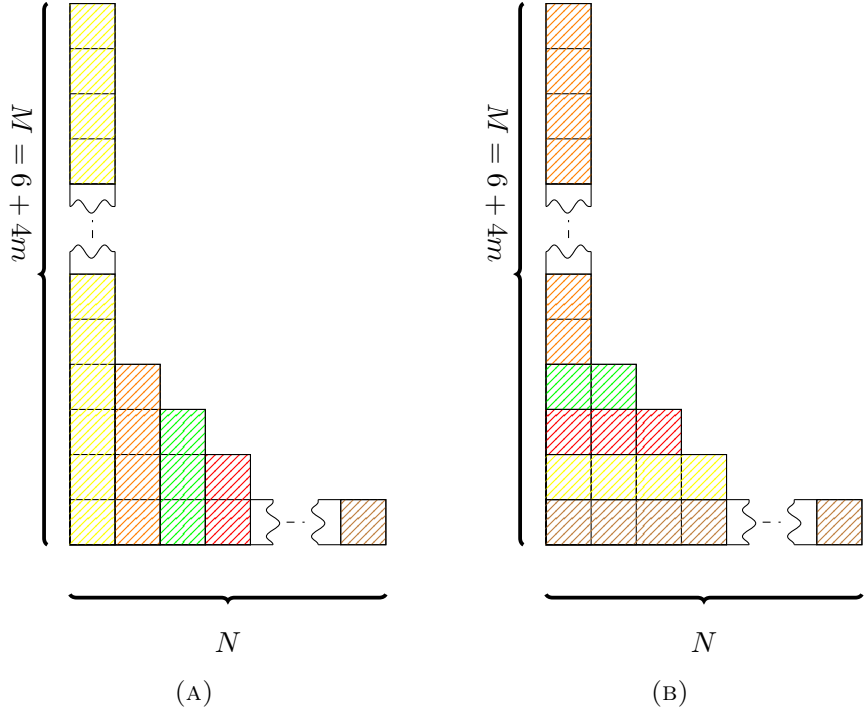


For the cylinder decompositions of the origami \(\mathcal{O}_{N,M}^{(5)}\) in horizontal, resp. vertical direction we get in both cases five maximal cylinders with moduli \(1/(M-4),\ 2/1,\ 3/1,4/1,\ N/1\) for the horizontal direction and moduli \(1/(N-4),\ 2/1,\ 3/1,4/1,\ M/1\) for the vertical direction (see Figure \ref{fig:G5cylinder(0,1)} and Figure \ref{fig:G5cylinder(1,0)}). 
We get two Dehn twists which act on \(H_1^{(0)}(\mathcal{O}_{N,M}^{(5)},\mathbb{Q})\) by the following mapping rules:
\begin{IEEEeqnarray*}{lCCl}
D_h \colon ~  w &\longmapsto& w &+ 12(M-4)N\ \Omega(\sigma_1,w)\,\sigma_1 + 6N\, \Omega(\sigma_2,w)\,\sigma_2  \\ 
                &               &   &  +4N\ \Omega(\sigma_3,w)\,\sigma_3 + 3N\,\Omega(\sigma_4,w)\,\sigma_4 + 12\ \Omega(\sigma_N,w)\,\sigma_N,\\[4mm]
D_v \colon~   w &\longmapsto& w &+ 12(N-4)M\ \Omega(\zeta_1,w)\,\zeta_1 + 6M\ \Omega(\zeta_2,w)\,\zeta_2 \\
                &       &   &+ 4M\ \Omega(\zeta_3,w)\,\zeta_3 +3N\,\Omega(\zeta_4,w)\,\zeta_4+ 12\ \Omega(\zeta_M,w)\,\zeta_M.
\end{IEEEeqnarray*}

It is now easy to calculate representation matrices \( M_h^{(0)}\) and \(M_v^{(0)}\) for the action of the horizontal and vertical twist on \(H_1^{(0)}(\mathcal{O}_{N,M}^{(5)},\mathbb{Q})\) with respect to the basis \(B^{(0)}\):
\begin{displaymath}
M_h^{(0)}=
\left(\begin{array}{cccccccc}
1 & 0 & 0 & 0 & 0    & 0    & 6\,N & 6\,N\\
0 & 1 & 0 & 0 & 0    & 4\,N & 4\,N & 4\,N\\
0 & 0 & 1 & 0 & 3\,N & 3\,N & 3\,N & 3\,N\\
0 & 0 & 0 & 1 & -12  & -24  & -36  & 12-12\,M\\
0 & 0 & 0 & 0 & 1    & 0    & 0    & 0\\
0 & 0 & 0 & 0 & 0    & 1    & 0    & 0\\
0 & 0 & 0 & 0 & 0    & 0    & 1    & 0\\
0 & 0 & 0 & 0 & 0    & 0    & 0    & 1
\end{array}\right)
\end{displaymath}
and
\begin{displaymath}
M_v^{(0)}=
\left(\begin{array}{cccccccc}
1 & 0 & 0 & 0 & 0 & 0 & 0 & 0\\
0 & 1 & 0 & 0 & 0 & 0 & 0 & 0\\
0 & 0 & 1 & 0 & 0 & 0 & 0 & 0\\
0 & 0 & 0 & 1 & 0 & 0 & 0 & 0\\
0 & 0 & -6\,M & -6\,M & 1 & 0 & 0 & 0\\
0 & -4\,M & -4\,M & -4\,M & 0 & 1 & 0 & 0\\
-3\,M & -3\,M & -3\,M & -3\,M & 0 & 0 & 1 & 0\\
12 & 24 & 36 & -12+12\,N & 0 & 0 & 0 & 1
\end{array}\right)
\end{displaymath}

\subsection{Finding a family of candidates in genus five}\label{SubSec:TranUnipG5}

If we compare the length of waist curves obtained by cylinder decompositions in the previous subsection, we can find the following elements of the non-tautological part $H_1^{(0)}(\mathcal{O}_{N,M}^{(5)},\mathbb{Z})$:
\begin{IEEEeqnarray*}{rCl}
    X    &= &   (3m+10)\,\chi_1-(N+m+1)\,\chi_2 \\
         &= &   (N+m+1)\,\Sigma_1+(N+m+1)\,\Sigma_2\\
         &  &+ (N+m+1)\Sigma_3-(3m+10)\Sigma_N \\
         &  &+ (2N-4m-18)\,Z_1+(2N-4m-18)\,Z_2\\
         &  &+ (3N-7)\,Z_3+ (3N-7)\, Z_M,              \\[2mm]
  A      &= &  (9+N+4M)\alpha_1-2\alpha_2                                              \\
         &= & -(9+N+4m)\Sigma_1+2\,\Sigma_2+2\,\Sigma_3+2\Sigma_N\\
         &  & -4\,Z_1-4\,Z_2+(7+N+4m)\,Z_3-4\,Z_M, \\[2mm]
  \Gamma &= &  (8+N+2m)\gamma_1-(3+2m)\gamma_2 \\
         &= & -(N+2m+8)\,\Sigma_1+(2m+3)\,\Sigma_2+(2m+3)\,\Sigma_3+(2m+3)\,\Sigma_N \\
         &  & + M\;Z_1+M\,Z_2+M\,Z_3-(N+5)\,Z_M.  
\end{IEEEeqnarray*}

We consider the subspace $W=\text{Span}_\mathbb{Q}(\{X,A,\Gamma\})$ of $H_1^{(0)}(\mathcal{O}_{N,M}^{(5)},\mathbb{Q})$. With respect to the basis $\{X,A,\Gamma\}$, we can represent the restrictions of the Dehn twists $D_\chi$, $D_\alpha$ and $D_\gamma$ to $W$ by the following three matrices (compare Equations \ref{Eq:DTgamma}, \ref{Eq:DTalpha} and \ref{Eq:DTchi}):
\begin{displaymath}
\begin{pmatrix}
1 & b & a \\
0 & 1 & 0 \\
0 & 0 & 1
\end{pmatrix},
\qquad
\begin{pmatrix}
 1 & 0 & 0 \\
-b & 1 & c \\
 0 & 0 & 1
\end{pmatrix},
\qquad
\begin{pmatrix}
 1 & 0  & 0 \\
 0 & 1  & 0 \\
-a & -c & 1
\end{pmatrix},
\end{displaymath}
where $a=-5N-3Nm+5m+5$, $b=-9N+21$ and $c=-2N+8m+2$. The element $e:=-c\,X+a\,A-b\,\Gamma\in W$ is invariant under $(D_\chi)|_W,~(D_\alpha)|_W$ respectively $(D_\gamma)|_W$. The two waist curves $\gamma_1,\gamma_2$ are linear independent in $H_1(\mathcal{O}_{N,M}^{(5)},\mathbb{Q})$ and the same holds for the waist curves $\alpha_1,\alpha_2$ and $\chi_1,\chi_2$. From the definition of the transvections $D_\chi$, $D_\alpha$ and $D_\gamma$ and the fact that the element $e$ is invariant under them, we can directly see $\Omega(e,w)=0$ for all $w\in W$. With respect to the new basis $\{X,A,e\}$ of $W$ we have the following matrix representations for $(D_\chi)|_W,~(D_\alpha)|_W$ and $(D_\gamma)|_W$:
\begin{displaymath}
\begin{pmatrix}
1 & b & 0 \\
0 & 1 & 0 \\
0 & 0 & 1
\end{pmatrix},
\qquad
\begin{pmatrix}
 1 & 0 & 0 \\
-b & 1 & 0 \\
 0 & 0 & 1
\end{pmatrix},
\qquad
\begin{pmatrix}
 \frac{ac}{b}+1 & \frac{c^2}{b}   & 0 \\
 -\frac{a^2}{b} & -\frac{ac}{b}+1  & 0 \\
\frac{a}{b}     & \frac{c}{b}      & 1
\end{pmatrix} 
\end{displaymath}
whereby $b=-9N+21\neq 0 $ for all $N\in\mathbb{N}$. If we now choose $c=0$ or equivalent $N=1+4m$ one can easily see that $a,b\neq 0 $ for all $N,~m\in\mathbb{N}$ and the subgroup of $\text{Sp}_\Omega(W)$ generated by $(D_\chi)|_W,~(D_\alpha)|_W$ and $(D_\gamma)|_W$ contains a non-trivial element of the unipotent radical.

\subsection{Zariski density and arithmeticity for a genus five family}
In this subsection we fix $M=6+4m$ and $N=1+4m$ ($m\in\mathbb{N}$).
The characteristic polynomial of the matrix $A:=A_5(N,M):=M_h^{(0)}\cdot M_v^{(0)}\in\mathbb{R}^{8\times 8}$ is given by a reciprocal polynomial 
  \begin{IEEEeqnarray*}{lCl}
     P(X)=\chi_A(X)&=& \sum_{i=0}^8 a_i\,X^i\in\mathbb{Z}[X]
  \end{IEEEeqnarray*}
with $a_0=a_8=1$ and $a_i=a_{8-i}$ for $i=1,\dots,4$.

We have $1/X^4\cdot P(X)=Q(X+1/X+2)$ for the quartic polynomial 
  \begin{equation}\label{Eq:CubicQG=5}
      Q(Y)=Y^4+\sum_{i=0}^3 b_i\, Y^i\in\mathbb{Z}[Y]
  \end{equation}
with coefficients
\begin{IEEEeqnarray*}{lClClCl}
    b_3 &=& a_1-8,                  &\quad& b_2 &=& a_2-6\,a_1+20,  \\
    b_1 &=& a_3-4\,a_2+9\,a_1-16,   &\quad& b_0 &=& a_4-2\,a_3+2\,a_2-2\,a_1+2.
\end{IEEEeqnarray*}
Let now $\mu_1,\,\mu_2,\,\mu_3,\,\mu_4\in\mathbb{C}$ be the roots of the polynomial $Q(Y)\in\mathbb{Z}[Y]$. 
and $CR_Q(Y)\in\mathbb{Q}[Y]$ its cubic resolvent (cf. \S \ref{SubSec:GalG45}).
  
For $m\equiv 1$ modulo $31$ the polynomials $P(X)\in\mathbb{Z}[X]$ and $Q(Y)\in\mathbb{Z}[Y]$ are irreducible modulo $31$ and hence irreducible over the rational numbers $\mathbb{Q}$. Furthermore the cubic resolvent $CR_Q(Y)\in\mathbb{Z}[Y]$ is irreducible modulo $11$ if $m\equiv 1$ modulo $11$ and in this case also irreducible over the rational numbers. The discriminant $\text{Disc}(Q)$ of the polynomial $Q(Y)$ can be written by irreducible factors in terms of $m$ as
  \[
  \text{Disc}(Q)=c\cdot f(m)\cdot (2\,m+3)^6\cdot (4\,m+1)^6
  \]
for a positive integer $c$ and a monic polynomial $f(m)$ of degree $12$. With Siegel's theorem of integral points we conclude that $\text{Disc}(Q)$ can only be a square of a rational number for finitely many $m\in\mathbb{N}$. This implies that the Galois group $\text{Gal}(Q)$ of $Q(Y)$ can be identified with the full symmetric group $\text{Sym}(\{\mu_1,\dots,\mu_4\})$ for all but perhaps finitely many $m\in\mathbb{N}$ with $m\equiv 1$ modulo $p\in\{11,\,31\}$. In these cases the Galois group $\text{Gal}(P)\leq \mathbb{Z}_2^4\rtimes S_4$ of our reciprocal polynomial $P(X)=\text{char}_A(X)$ projects surjectively on $S_4$ and hence $\text{Gal}(P)$ can be identified with $S_4$, with one of the groups $H_{4,i}~(i=1,2,3)$ or with the full hyperoctahedral group $G_4=\mathbb{Z}_2^4\rtimes S_4$. 

We can write $\Delta_{4,1}=\delta_{4,1}^2$ from Lemma \ref{Lem:MaxSubDiscr} by irreducible factors in terms of $m$ as
  \[
  \Delta_{4,1}= c\cdot g(m)\cdot (2\,m+1)(4\,m-3)(2\,m+3)^3(4\,m+1)^3
  \]
for an integer $c$ and a monic polynomial $g(m)$ of degree $8$. Furthermore $\Delta_{4,2}=\text{Disc}(Q)\cdot\Delta_{4,1}$. 

Siegel's theorem on integral points implies again that $\delta_{4,1}$ and $\delta_{4,2}$ are rational numbers only for finitely many $m\in\mathbb{N}$. Since $H_{4,3}$ is a subgroup of $H_{4,1}$ the fundamental theorem of Galois theory together with Lemma \ref{Lem:MaxSubDiscr} shows that $\text{Gal}(P)\neq H_{4,i}$ for $i=1,2,3$.

We showed the hardest part of the following proposition:

\begin{proposition}\label{Prop:GalPinG5}
The matrix $A=A_5(N,M)\in\mathbb{R}^{8\times 8}$ is Galois pinching for all but finitely many $m\in\mathbb{N}$ with $m\equiv 1$ modulo $p$, where $p\in\{11,~31\}$. 
\end{proposition}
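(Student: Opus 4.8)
Most of the ingredients are already assembled in the discussion preceding the statement, so the proof is chiefly a matter of combining them, plus one extra point. Write $P(X)=\chi_A(X)$. Since $A=M_h^{(0)}\cdot M_v^{(0)}$ is a product of two symplectic matrices, each representing a Dehn twist on $H_1^{(0)}(\mathcal{O}_{N,M}^{(5)},\mathbb{Z})$, the polynomial $P$ is reciprocal of degree $8$; and reducing $P$ modulo $31$ shows it is irreducible over $\mathbb{Q}$ whenever $m\equiv 1$ modulo $31$. For the Galois group, recall that the cubic resolvent $CR_Q$ of the auxiliary quartic $Q$ (defined by $X^{-4}P(X)=Q(X+1/X+2)$) is irreducible over $\mathbb{Q}$ when $m\equiv 1$ modulo $11$, and that Siegel's theorem on integral points applied to $y^2=\mathrm{Disc}(Q)$ --- via the given factorization with monic degree-$12$ factor $f(m)$ --- shows $\mathrm{Disc}(Q)$ is a perfect square for only finitely many $m$. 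By Theorem \ref{Thm:CRGalGr} and Remark \ref{Rem:CRandDisQ} this yields $\mathrm{Gal}(Q)=S_4$, hence the projection $\phi\colon\mathrm{Gal}(P)\to S_4$ is onto, for all but finitely many $m$ with $m\equiv 1$ modulo $p\in\{11,31\}$.

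By Proposition \ref{Prop:SubgrPhiOnto}, $\mathrm{Gal}(P)$ is then one of $S_4$, $G_4$, $H_{4,1}$, $H_{4,2}$, $H_{4,3}$. The point to exploit here is that, because $k=4$ is even, not only $H_{4,3}$ but also the split copy of $S_4$ is contained in $H_{4,1}$ (the product $\prod_i\epsilon_i$ equals $1$ for each of the relevant tuples $\epsilon$). Consequently, the fact that $\delta_{4,1}\notin\mathbb{Q}$ --- which follows from Siegel's theorem applied to $y^2=\Delta_{4,1}$, using the stated factorization with monic degree-$8$ factor $g(m)$, whose squarefree part has degree well above $3$ --- rules out $S_4$, $H_{4,1}$ and $H_{4,3}$ all at once by Lemma \ref{Lem:MaxSubDiscr}(i) and the fundamental theorem of Galois theory; and $\delta_{4,2}\notin\mathbb{Q}$ --- from Siegel applied to $y^2=\Delta_{4,2}=\mathrm{Disc}(Q)\cdot\Delta_{4,1}$, together with Lemma \ref{Lem:MaxSubDiscr}(ii) --- rules out $H_{4,2}$. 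Hence $\mathrm{Gal}(P)=G_4$, the full hyperoctahedral group and the largest possible Galois group of a reciprocal polynomial of degree $8$. (In contrast with the genus-four case, where $k=3$ is odd and $H_{3,3}\not\leq H_{3,1}$, no supplementary Dedekind-type argument is required.)

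It remains to verify the last clause of the definition of Galois-pinching, namely that $P$ splits over $\mathbb{R}$; equivalently, that each of the four roots $\mu_i$ of $Q$ lies in $\mathbb{R}\setminus(0,4)$. I would do this by a sign analysis of $Q$ on the interval $[0,4]$, using that $\mathrm{Disc}(Q)>0$ for all large $m$ (so $Q$ has four real roots or none) and examining $Q(0)$ and $Q(4)$ --- equivalently, by confirming that the appropriate Sturm/subresultant inequalities hold for all sufficiently large $m$. Granting this, all three defining properties hold and $A=A_5(N,M)$ is Galois-pinching for all but finitely many $m$ with $m\equiv 1$ modulo $p\in\{11,31\}$. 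I expect the main obstacle to be precisely this real-splitting step, since the Galois-theoretic input is insensitive to whether the roots lie on the unit circle or the real line; a secondary concern is carrying out the various explicit reductions modulo $11$, $31$ and the factorizations of $\mathrm{Disc}(Q)$, $\Delta_{4,1}$, $\Delta_{4,2}$ in $m$ carefully enough that Siegel's theorem genuinely applies.
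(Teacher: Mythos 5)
Your argument is essentially the paper's own: irreducibility of $P$ and $Q$ via reduction mod $31$, irreducibility of the cubic resolvent mod $11$, Siegel's theorem applied to $\mathrm{Disc}(Q)$ to get $\mathrm{Gal}(Q)=S_4$, and then Siegel applied to $\Delta_{4,1}$ and $\Delta_{4,2}=\mathrm{Disc}(Q)\cdot\Delta_{4,1}$ together with Lemma \ref{Lem:MaxSubDiscr} and Proposition \ref{Prop:SubgrPhiOnto} to force $\mathrm{Gal}(P)=G_4$. Two points where you differ are actually to your credit: you explicitly dispose of the case $\mathrm{Gal}(P)\cong S_4$ by observing that such a complement is absorbed into $H_{4,1}$ (or $H_{4,2}$), whereas the paper only records $H_{4,3}\leq H_{4,1}$ and excludes the $H_{4,i}$, leaving the $S_4$ case tacit; and you flag the ``splits over $\mathbb{R}$'' clause of the Galois-pinching definition, which the paper's written proof never verifies (having the full hyperoctahedral Galois group does not by itself force real eigenvalues, since complex conjugation may act as a nontrivial sign vector). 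Your proposed verification of that clause --- showing all roots $\mu_i$ of $Q$ are real and outside $(0,4)$ by a sign/Sturm analysis for large $m$ --- is only sketched, so strictly speaking your write-up shares the same gap as the paper on this point; carrying out that computation (or quoting the corresponding verification from the companion paper) is what remains to make the proof of the proposition complete.
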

\begin{proof}
    The only thing that is left to show, is that for $m\in\mathbb{N}$ as in the statement and big enough, the matrix $A_5(N,M)$ has only real eigenvalues. For this reason we analyse the roots of the polynomial $Q(Y)\in\mathbb{Z}$ from Equation (\ref{Eq:CubicQG=5}) with the help of (\ref{Eq:DisRealRoots}) in Section \ref{Sec:RealRoots}.
    First we bring $Q(Y)$ in the depresses form $DQ(t)$ by substituting $t=Y-b_3/4$. If we determine the roots of the depressed form $DQ(t)$, we can determine them directly for $Q(Y)$ as well. By a boring but not to complicated analysis of the expressions $\text{Disc}(DQ)$, $F(DQ)$ and $q$ for $DQ(Y)$ as in \ref{Eq:DisRealRoots} (or by using a computer algebra system), we see that
    \begin{align*}
       \text{Disc}(DQ)>0,\quad F(DQ)<0 \quad{and}\quad q<0 
    \end{align*}
    for $m$ big enough.
    
    Hence in that case all the roots of $DQ(t)$ and thus all the roots of $Q(Y)$ are real. As in the proof of \ref{Prop:GalPinG4} we conclude that $A_5(N,M)$ has only real eigenvalues for $m\in\mathbb{N}$ with $m\equiv 1$ modulo $p$, where $p\in\{11,~31\}$ and $m$ big enough.
\end{proof}

Since $B_5(N,M):=M_\alpha^{(0)}$ is an unipotent matrix such that the image  $(B_5(N,M)-\textrm{Id})(\mathbb{R}^8)$ is not a Lagrangian subspace, the matrices $A_5(N,M)$ and $B_5(N,M)$ generate a Zariski-dense subgroup of $\text{Sp}(H_1^{(0)}(\mathcal{O}_{N,M}^{(5)},\mathbb{Z}))$ for all $m\in\mathbb{N}$ as in Proposition \ref{Prop:GalPinG5}. Together with the theorem of Singh-Venkataramana and the result in Subsection \ref{SubSec:TranUnipG5}, we conclude:

\begin{theorem}\label{t.A.g5}
The genus five origamis $\mathcal{O}^{(5)}_{N,M}\in\mathcal{H}(8)$ with $N=1+4\,m$ and $M=6+4\,m$ have Kontsevich--Zorich monodromies with finite index in $\text{Sp}(H_1^{(0)}(\mathcal{O}_{N,M}^{(5)},\mathbb{Z}))$ for all but finitely many $m\in\mathbb{N}$ such that $m\equiv 1$ modulo $p$, where $p\in\{11,\,31\}$.
\end{theorem}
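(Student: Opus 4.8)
The plan is to assemble the data produced in the previous subsections into the two criteria recalled in Subsection~\ref{ss.general-strategy}: the Zariski-density criterion of \cite[Theorem 9.10]{prasad14} together with \cite[Proposition 4.3]{matheus15}, and then the arithmeticity criterion of Singh--Venkataramana \cite{singh14}. Throughout we keep $N=1+4m$ and $M=6+4m$ and we restrict to those $m\equiv 1$ modulo $p\in\{11,31\}$ that lie outside the finite exceptional set of Proposition~\ref{Prop:GalPinG5}.

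First I would establish Zariski-density of the Kontsevich--Zorich monodromy $\Gamma$ of $\mathcal{O}_{N,M}^{(5)}$ in $\mathrm{Sp}(H_1^{(0)}(\mathcal{O}_{N,M}^{(5)},\mathbb{R}))$. The matrix $A_5(N,M)=M_h^{(0)}\cdot M_v^{(0)}$ records the action on $H_1^{(0)}(\mathcal{O}_{N,M}^{(5)},\mathbb{Z})$ of the affine map obtained by composing the multitwists in the horizontal and vertical directions, so $A_5(N,M)\in\Gamma$; by Proposition~\ref{Prop:GalPinG5} it is Galois-pinching. The matrix $B_5(N,M)=M_\alpha^{(0)}\in\Gamma$ comes from the multitwist $D_\alpha$ in the direction $(1,-2)$, hence is a product of symplectic transvections and in particular unipotent; and by the mapping rules \ref{Eq:DTalpha} the image $(B_5(N,M)-\mathrm{Id})\bigl(H_1^{(0)}(\mathcal{O}_{N,M}^{(5)},\mathbb{R})\bigr)=\mathbb{R}A$ is a line, hence not a Lagrangian $4$-plane of the $8$-dimensional symplectic space. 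The density criterion of Subsection~\ref{ss.general-strategy} then gives that $\langle A_5(N,M),B_5(N,M)\rangle\subseteq\Gamma$, and a fortiori $\Gamma$ itself, is Zariski-dense.

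Second, I would verify the hypotheses of the Singh--Venkataramana criterion with the three unipotent matrices $T_1=D_\chi$, $T_2=D_\alpha$, $T_3=D_\gamma$ coming from the multitwists in the directions $(1,4)$, $(1,-2)$, $(1,2)$, all of which lie in $\Gamma$. Reading off Equations~\ref{Eq:DTchi}, \ref{Eq:DTalpha} and \ref{Eq:DTgamma}, each $T_n-\mathrm{Id}$ has integral image equal to $\mathbb{Z}X$, $\mathbb{Z}A$, $\mathbb{Z}\Gamma$ respectively, since in each case the greatest common divisor of the coefficients produced on the basis $B^{(0)}$ equals $1$. The span $W=\mathbb{Q}X\oplus\mathbb{Q}A\oplus\mathbb{Q}\Gamma$ is $3$-dimensional and, by the computations of Subsection~\ref{SubSec:TranUnipG5}, the restriction $\Omega|_W$ has rank $2$ with radical $\mathbb{Q}e$ (where $e=-c\,X+a\,A-b\,\Gamma$), so $W$ is not isotropic. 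Finally, the choice $N=1+4m$ makes $c=-2N+8m+2$ vanish while keeping $a,b\neq 0$, and in Subsection~\ref{SubSec:TranUnipG5} it is shown that $\langle T_1|_W,T_2|_W,T_3|_W\rangle$ then contains a non-trivial element of the unipotent radical of $\mathrm{Sp}(W)$ (concretely, a suitable word in $(D_\alpha)|_W$ and $(D_\gamma)|_W$). All the hypotheses of the arithmeticity criterion being met, we conclude that $\Gamma$ has finite index in $\mathrm{Sp}(H_1^{(0)}(\mathcal{O}_{N,M}^{(5)},\mathbb{Z}))$.

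I expect the genuinely delicate point to lie not in this final assembly but upstream, in Proposition~\ref{Prop:GalPinG5}: ruling out, for all but finitely many admissible $m$, that $\mathrm{Gal}(P)$ is one of $S_4$, $H_{4,1}$, $H_{4,2}$, $H_{4,3}$ is where the Dedekind factorisations modulo $11$ and $31$ and the Siegel-type input on the non-squareness of $\mathrm{Disc}(Q)$, $\Delta_{4,1}$ and $\Delta_{4,2}$ do the real work, and it is the source of the finite exceptional set in the statement; once that proposition is granted, the steps above are uniform in $m$ and essentially formal. The one routine verification not to overlook is that $\{X,A,\Gamma\}$ is genuinely a basis of a $3$-dimensional space when $N=1+4m$, $M=6+4m$ — this is implicit in Subsection~\ref{SubSec:TranUnipG5}, where it is used as a basis of $W$, and is confirmed by evaluating the coefficient matrix of $X$, $A$, $\Gamma$ in the basis $B^{(0)}$.
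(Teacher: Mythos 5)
Your proposal is correct and follows essentially the same route as the paper: Proposition~\ref{Prop:GalPinG5} supplies the Galois-pinching element $A_5(N,M)$, the unipotent $B_5(N,M)=M_\alpha^{(0)}$ with non-Lagrangian image gives Zariski density via the criterion of Subsection~\ref{ss.general-strategy}, and the computations of Subsection~\ref{SubSec:TranUnipG5} with $c=0$ (i.e.\ $N=1+4m$) feed the Singh--Venkataramana criterion exactly as in the paper's argument. Your extra verifications (that each $(T_n-\mathrm{Id})$ has image $\mathbb{Z}w_n$ and that $W$ is non-isotropic) are details the paper leaves implicit, and they check out.
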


\section{Genus six stairs}\label{s.g6}

\begin{figure}
\begin{center}

\begin{tikzpicture}[scale=0.6]
	\draw (0,-1) rectangle (8,0);
	\draw (0,-1) rectangle (5,1);
	\draw (0,1) rectangle (4,2);
	\draw (0,2) rectangle (3,3);
	\draw (0,3) rectangle (2,4);
	\draw (0,0) rectangle (1,10);
	\draw (1,-1) -- (1,4);
	\draw (2,-1) -- (2,3);
	\draw (3,-1) -- (3,2);
	\draw (4,-1) -- (4,1);
	\draw (7,-1) -- (7,0);
	\draw (0,5) -- (1,5);
	\draw (0,7) -- (1,7);
	\draw (0,8) -- (1,8);
	\draw (0,9) -- (1,9);
		
    
    \draw [decorate,line width=0.5mm,decoration={brace}] (-0.5,5.1) --  (-0.5,10) node[pos=0.5,left=10pt,black]{$4m$};
    \draw [decorate,line width=0.5mm,decoration={brace,mirror}] (0,-2.1) --  (8,-2.1) node[pos=0.5,below=10pt,black]{$N$};
    
     \draw[thick,color=green] (0,-0.7)--(5.4,-0.7);
     \draw[dashed,thick,color=green] (5.5,-0.7) -- (6.5,-0.7);
	\draw[thick,color=green, ->] (6.6,-0.7)--(8,-0.7);
	\node[color=green, right] (sigmaN) at (8,-0.7) {\(\sigma_N\)};
	\draw[thick,color=green, ->] (0,0.3)--(5,0.3);
	\node[color=green, right] (sigma5) at (5,0.3) {\(\sigma_5\)};
	\draw[thick,color=green, ->] (0,1.3)--(4,1.3);
	\node[color=green, right] (sigma4) at (4,1.3) {\(\sigma_4\)};
	\draw[thick,color=green, ->] (0,2.3)--(3,2.3);
	\node[color=green, right] (sigma3) at (3,2.3) {\(\sigma_3\)};
	\draw[thick,color=green, ->] (0,3.3)--(2,3.3);
	\node[color=green, right] (sigma2) at (2,3.3) {\(\sigma_2\)};
	\draw[thick,color=green, ->] (0,4.3)--(1,4.3);
	\node[color=green, right] (sigma1) at (1,4.3) {\(\sigma_1\)};
	
	\draw[thick,color=red] (0.3,-1)--(0.3,5.4);
	\draw[dashed,thick,color=red] (0.3,5.5) -- (0.3,6.5);
	\draw[thick, color=red,->] (0.3,6.6) -- (0.3,10);
	\node[color=red, below] (zetaM) at (0.3,-1) {\(\zeta_M\)};
	\draw[thick,color=red, ->] (1.3,-1)--(1.3,4);
	\node[color=red, below] (zeta5) at (1.3,-1) {\(\zeta_5\)};
	\draw[thick,color=red, ->] (2.3,-1)--(2.3,3);
	\node[color=red, below] (zeta4) at (2.3,-1) {\(\zeta_4\)};
	\draw[thick,color=red, ->] (3.3,-1)--(3.3,2);
	\node[color=red, below] (zeta3) at (3.3,-1) {\(\zeta_3\)};
	\draw[thick,color=red, ->] (4.3,-1)--(4.3,1);
	\node[color=red, below] (zeta2) at (4.3,-1) {\(\zeta_2\)};
	\draw[thick,color=red, ->] (7.3,-1)--(7.3,0);
    \node[color=red,below] (zeta1) at (7.3,-1) {\(\zeta_1\)};
	
\end{tikzpicture}

\end{center}

\caption{Origami \(\mathcal{O}_{N,M}^{(6)}\) with horizontal waist curves \(\sigma_1,\sigma_2,\sigma_3,\sigma_4,\sigma_5,\sigma_N\) and vertical waist curves \(\zeta_1,\zeta_2,\zeta_3,\zeta_4,\zeta_5,\zeta_M\).}
\label{fig:OrigamiNM6}
\end{figure}
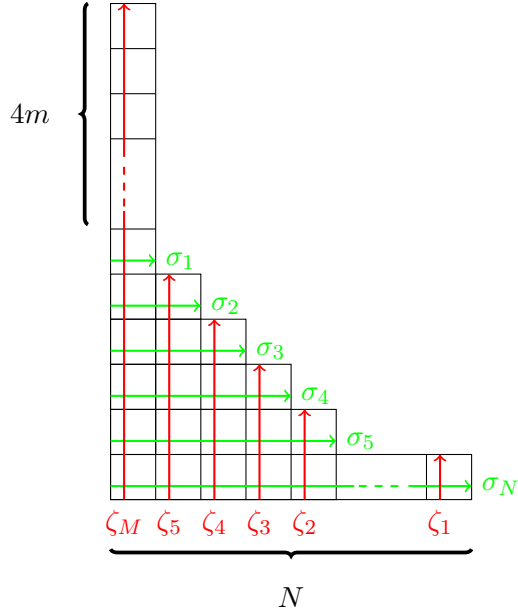

For $N,M\in\mathbb{N}$ with $M=6+4m~(m\in\mathbb{N})$, we consider the origami $\mathcal{O}^{(6)}_{N,M}$ that is given by the following horizontal and vertical permutation $h,v\in\text{Sym}(\{1,2,\dots, N+M+9\})$:
\begin{align*}
  h = &  (1,\dots, N)(N+1,\dots,N+5)(N+6,\dots,N+9)\\
      &  (N+10,N+11,N+12)\\ 
      &  (N+13,N+14)(N+15)\dots(N+M+9)\\[2mm]
  v = &  (1,N+1,N+6,N+10,N+13,N+15,\dots,N+M+9)\\
      &  (2,N+2,N+7,N+11,N+14)(3,N+3,N+8,N+12) \\
      &  (4,N+4,N+12)(5,N+5)(6)\dots(N)
\end{align*}
The six waist curves $\sigma_1,\dots,\sigma_5,~\sigma_N$ of the maximal horizontal cylinders together with the waist curves $\zeta_1,\dots,\zeta_5,\zeta_M$ of the maximal vertical cylinders form again a basis of the absolute homology $H_1(\mathcal{O}^{(6)}_{N,M},\mathbb{Z})$ of the origami $\mathcal{O}^{(6)}_{N,M}$ see Figure \ref{fig:OrigamiNM6}.

It is easy to see that $B^{(0)}=\{\Sigma_i,\,\Sigma_N,\,Z_i,\,Z_M\mid i=1,\dots,4\}$ is a basis of the non-tautological part $H_1^{(0)}(\mathcal{O}_{N,M}^{(6)},\mathbb{Q})$, where
\begin{IEEEeqnarray*}{rClClCrCl}
\Sigma_i&:=&\sigma_{i+1}-(i+1)\,\sigma_1 &~&\mbox{for $i=1,\dots,4$,} &\quad& \Sigma_N &:=& \sigma_N-N\,\sigma_1,\\ 
     Z_i&:=&\zeta_{i+1}-(i+1)\,\zeta_1   &~&\mbox{for $i=1,\dots,4$,} &\quad&      Z_N &:=& \zeta_N-N\,\zeta_1.
\end{IEEEeqnarray*}

We can represent the restriction of the intersection form $\Omega$ to the non-tautological part $H_1^{(0)}(\mathcal{O}_{N,M}^{(6)},\mathbb{Q})$ of the absolute homology by the following matrix with respect to the basis $B^{(0)}$ from above:

\begin{displaymath}
\left(\begin{array}{cccccccccc}
0 & 0 & 0 & 0 &     0 & 0 & 0  & 0  & 1 & -1\\
0 & 0 & 0 & 0 &     0 & 0 & 0  & 1  & 1 & -2\\
0 & 0 & 0 & 0 &     0 & 0 & 1  & 1  & 1 & -3\\
0 & 0 & 0 & 0 &     0 & 1 & 1  & 1  & 1 & -4\\
0 & 0 & 0 & 0 &     0 &-1 & -2 & -3 &-4 & 1-N-M\\
0 & 0 & 0 &-1 &     1 & 0 &  0 & 0  & 0 & 0\\
0 & 0 &-1 & -1&     2 & 0 &  0 & 0  & 0 & 0\\
0 &-1 &-1 & -1&     3 & 0 &  0 & 0  & 0 & 0\\
-1&-1 &-1 & -1&     4 & 0 &  0 & 0  & 0 & 0\\
1 & 2 & 3 &  4& M+N-1 & 0 &  0 & 0  & 0 & 0
\end{array}\right)
\end{displaymath}

\subsection{Dehn twists in genus six}\label{SubSec:DTG6}


\begin{figure}[htb]
\center
\begin{tikzpicture}[scale=0.7]
	\draw (0,-1) rectangle (5,0);
	\draw (7,-1) rectangle (8,0);
	\draw (0,-1) rectangle (3,3);
	\draw (0,-1) rectangle (2,4);
	\draw (0,-1) rectangle (1,5);
	\draw (0,-1) rectangle (4,2);
	\draw (0,-1) rectangle (5,1);
	\draw (0,7) rectangle (1,11);
	\draw (1,0) -- (1,3);
	\draw (2,0) -- (2,2);
	\draw (3,0) -- (3,1);
	\draw (0,4) -- (1,4);
	\draw (0,7) -- (1,7);
	\draw (0,8) -- (1,8);
	\draw (0,9) -- (1,9);
	\draw (0,10) -- (1,10);

    \draw [decorate,line width=0.5mm,decoration={brace}] (-0.5,-1) --  (-0.5,11) node[pos=0.5,left=10pt,black]{$M=6+4m$};
    \draw [decorate,line width=0.5mm,decoration={brace,mirror}] (0,-2.1) --  (8,-2.1) node[pos=0.5,below=10pt,black]{$N$};
    
    \draw [dashed] (5.8,-0.5) -- (6.2,-0.5);
    \draw [dashed] (0.5,5.8) -- (0.5,6.2);

    \draw [decorate, decoration={snake}] (0,5.5) -- (1,5.5);
    \draw [decorate, decoration={snake}] (0,6.5) -- (1,6.5);
    \draw (0,5) -- (0,5.5);
    \draw (1,5) -- (1,5.5);
    \draw (0,6.5) -- (0,7);
    \draw (1,6.5) -- (1,7);
    \draw [decorate, decoration={snake}] (5.5,-1) -- (5.5,0);
    \draw [decorate, decoration={snake}] (6.5,-1) -- (6.5,0);
    \draw (5,-1) -- (5.5,-1);
    \draw (5,0) -- (5.5,0);
    \draw (6.5,-1) -- (7,-1);
    \draw (6.5,0) -- (7,0);


\draw[pattern color=blue, pattern = north east lines] (0,1) -- (1,-1) -- (1.5,-1) -- (0,2) -- (0,1)    --cycle;
\draw[pattern color=blue, pattern = north east lines] (1,4) -- (3.5,-1) -- (4,-1) -- (1.5,4) -- (1,4) --cycle;
\draw[pattern color=blue, pattern = north east lines] (3.5,2) -- (4,1) -- (4,2) -- (3.5,2) --cycle;


\draw[thick, color=red] (0.5,11) -- (1,10);
\draw[thick, color=red] (0,11) -- (1,9);
\draw[thick, color=red] (0,10) -- (1,8);
\draw[thick, color=red] (0,9) -- (1,7);
\draw[thick, color=red] (0,8) -- (0.5,7);
\draw[thick, color=red] (0,5) -- (3,-1);
\draw[thick, color=red] (0,4) -- (2.5,-1);
\draw[thick, color=red] (0,3) -- (2,-1);
\draw[thick, color=red] (0,2) -- (1.5,-1);
\draw[thick, color=red] (0,1) -- (1,-1);
\draw[thick, color=red] (0,0) -- (0.5,-1);
\draw[thick, color=red] (0.5,5) -- (3.5,-1);
\draw[thick, color=red] (1.5,4) -- (4,-1);
\draw[thick, color=red] (2.5,3) -- (4.5,-1);
\draw[thick, color=red] (3.5,2) -- (5,-1);
\draw[thick, color=red] (4.5,1) -- (5,0);
\draw[thick, color=red] (7,0) -- (7.5,-1);
\draw[thick, color=red] (7.5,0) -- (8,-1);

\end{tikzpicture}
\caption{Origami \(\mathcal{O}^{(6)}_{N,M}\) with cylinder decomposition in direction \((1,-2)\). Here $\gamma_1$ is the waist curve of the blue cylinder.}
\label{fig:G6cylinder(1,-2)}
\end{figure}
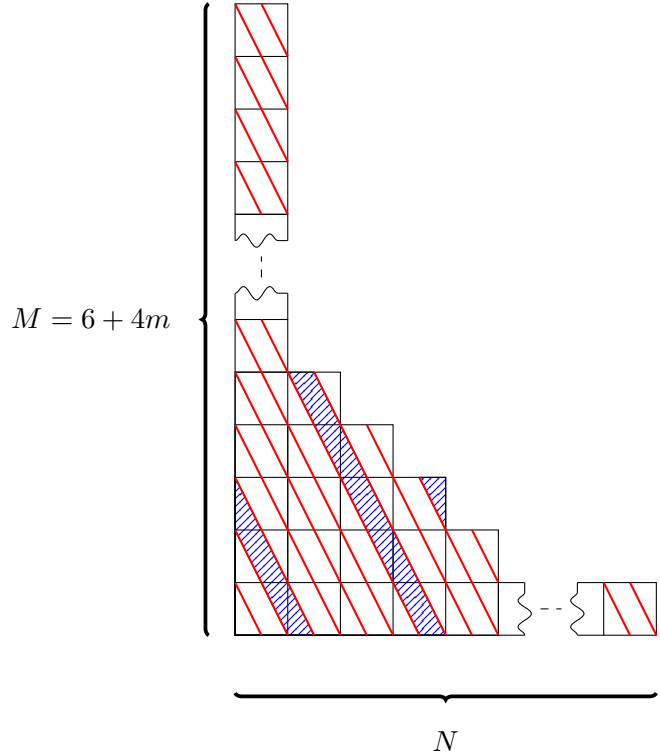


In direction $(1,-2)$ there are two maximal cylinders with one waist curve $\gamma_1$ of combinatorial length $4$ and one waist curve $\gamma_2$ of combinatorial length $N+4m+11$. We count intersection points of $\gamma_1$ and $\gamma_2$ with the elements of $B^{(0)}$ and get
\begin{IEEEeqnarray*}{lClClClClCl}
\Omega(\gamma_1,\sigma_1)&=&0, &\quad& \Omega(\gamma_1,\sigma_2)&=&1, &\quad& \Omega(\gamma_1,\sigma_3)&=&1 , \\
\Omega(\gamma_1,\sigma_4)&=&2, &     & \Omega(\gamma_1,\sigma_5)&=&2, &     & \Omega(\gamma_1,\sigma_N)&=&2 , \\[4mm]
\Omega(\gamma_1,\zeta_1) &=&0, &     & \Omega(\gamma_1,\zeta_2) &=&0, &     & \Omega(\gamma_1,\zeta_3) &=&1  , \\
\Omega(\gamma_1,\zeta_4) &=&1, &     & \Omega(\gamma_1,\zeta_5) &=&1, &     & \Omega(\gamma_1,\zeta_M) &=&1  , \\[4mm]
\Omega(\gamma_2,\sigma_1)&=&2, &     & \Omega(\gamma_2,\sigma_2)&=&3, &     & \Omega(\gamma_2,\sigma_3)&=&5 , \\
\Omega(\gamma_2,\sigma_4)&=&6, &     & \Omega(\gamma_2,\sigma_5)&=&8, &\quad& \Omega(\gamma_2,\sigma_N)&=&2(N-1), \\[4mm]
\Omega(\gamma_2,\zeta_1) &=&1, &     & \Omega(\gamma_2,\zeta_2) &=&2, &     & \Omega(\gamma_2,\zeta_3) &=&2, \\
\Omega(\gamma_2,\zeta_4) &=&3, &     & \Omega(\gamma_2,\zeta_5) &=&4, &     &\Omega(\gamma_2,\zeta_M)  &=&5+4m. \\
\end{IEEEeqnarray*}
We can write $\Gamma:=(N+4m+11)\gamma_1-4\gamma_2\in H_1^{(0)}(\mathcal{O}_{N,M}^{(6)},\mathbb{Z})$ as a linear combination of elements of $B^{(0)}$ in the following way:
\begin{align*}
\Gamma = & \quad 4\,\Sigma_1 +4\,\Sigma_2-(N+4m+11)\,\Sigma_3+4\,\Sigma_4 +4\,\Sigma_N \\
         &- 8\,Z_1+(N+4m+7)\,Z_2-8\,Z_3+(N+4m+7)\,Z_4-8\,Z_M.
\end{align*}
The Dehn twist along the waist curves $\gamma_1$ and $\gamma_2$ acts on $H_1^{(0)}(\mathcal{O}_{N,M}^{(6)},\mathbb{Q})$ via the mapping
\begin{displaymath}
  D_\gamma\colon v\longmapsto v + (N+4m+11)\,\Omega(\gamma_1,v)\,\gamma_1 + 4\,\Omega(\gamma_2,v)\,\gamma_2
\end{displaymath}
and for the images of the elements in $B^{(0)}$ under $D_\gamma$ we get
\begin{IEEEeqnarray*}{lClClClClCl}
D_\gamma(\Sigma_1)&=&\Sigma_1 +\Gamma,    &\quad& D_\gamma(\Sigma_2) &=&\Sigma_2 +\Gamma,   &\quad& D_\gamma(\Sigma_3)&=&\Sigma_3 +2\,\Gamma, \\
D_\gamma(\Sigma_4)&=&\Sigma_4+ 2\,\Gamma, &\quad& D_\gamma(\Sigma_N) &=&\Sigma_N+2\,\Gamma, &\quad &   && \\
D_\gamma(Z_1)     &=& Z_1,                &\quad & D_\gamma(Z_2)     &=&Z_2+\Gamma,         &\quad& D_\gamma(Z_3)&=&Z_3+\Gamma \\
D_\gamma(Z_4)     &=& Z_4 +\Gamma,        &\quad& D_\gamma(Z_M)      &=&Z_M+\Gamma.         &\quad&    &&
\end{IEEEeqnarray*}


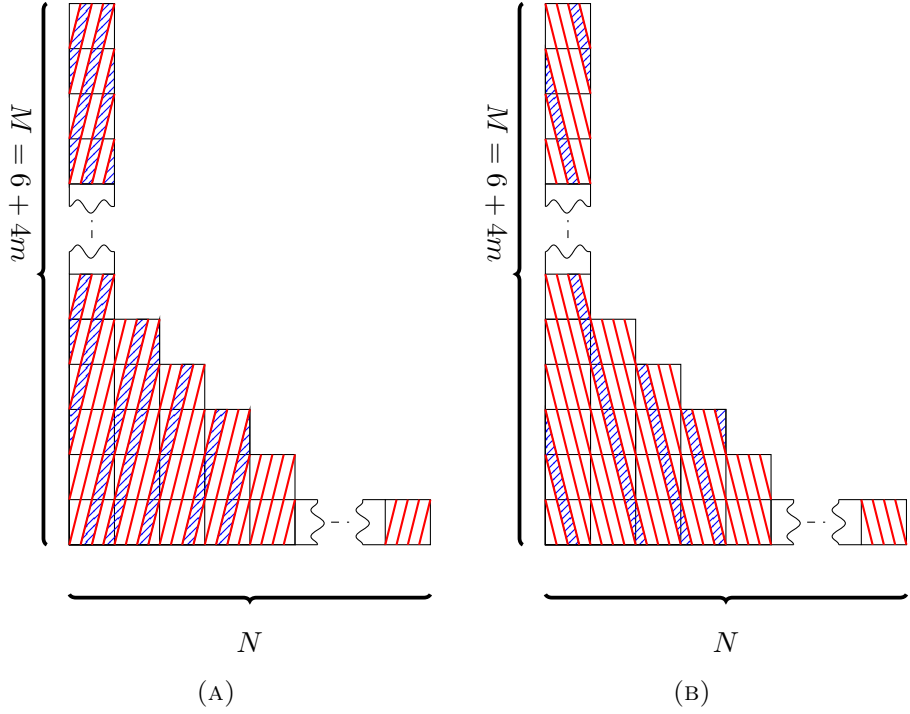
\begin{figure}
\centering
\noindent
\begin{subfigure}[b]{0.5\textwidth}
\centering
\begin{tikzpicture}[scale=0.6]
	\draw (0,-1) rectangle (5,0);
	\draw (7,-1) rectangle (8,0);
	\draw (0,-1) rectangle (3,3);
	\draw (0,-1) rectangle (2,4);
	\draw (0,-1) rectangle (1,5);
	\draw (0,-1) rectangle (4,2);
	\draw (0,-1) rectangle (5,1);
	\draw (0,7) rectangle (1,11);
	\draw (1,0) -- (1,3);
	\draw (2,0) -- (2,2);
	\draw (3,0) -- (3,1);
	\draw (0,4) -- (1,4);
	\draw (0,7) -- (1,7);
	\draw (0,8) -- (1,8);
	\draw (0,9) -- (1,9);
	\draw (0,10) -- (1,10);

    \draw [decorate,line width=0.5mm,decoration={brace}] (-0.5,-1) --  (-0.5,11) node[pos=0.5,left=10pt,black,rotate=-90]{$M=6+4m$};
    \draw [decorate,line width=0.5mm,decoration={brace,mirror}] (0,-2.1) --  (8,-2.1) node[pos=0.5,below=10pt,black]{$N$};
    
    \draw [dashed] (5.8,-0.5) -- (6.2,-0.5);
    \draw [dashed] (0.5,5.8) -- (0.5,6.2);

    \draw [decorate, decoration={snake}] (0,5.5) -- (1,5.5);
    \draw [decorate, decoration={snake}] (0,6.5) -- (1,6.5);
    \draw (0,5) -- (0,5.5);
    \draw (1,5) -- (1,5.5);
    \draw (0,6.5) -- (0,7);
    \draw (1,6.5) -- (1,7);
    \draw [decorate, decoration={snake}] (5.5,-1) -- (5.5,0);
    \draw [decorate, decoration={snake}] (6.5,-1) -- (6.5,0);
    \draw (5,-1) -- (5.5,-1);
    \draw (5,0) -- (5.5,0);
    \draw (6.5,-1) -- (7,-1);
    \draw (6.5,0) -- (7,0);

   \draw[pattern color=blue, pattern = north east lines] (0,10) -- (0,9) -- (0.5,11) -- (0.25,11) -- (0,10) --cycle;
   \draw[pattern color=blue, pattern = north east lines] (0,7) -- (1,11) -- (0.75,11) -- (0,8) -- (0,7) --cycle;
   \draw[pattern color=blue, pattern = north east lines] (0.25,7) -- (0.5,7) -- (1,9) -- (1,10) -- (0.25,7) --cycle;
   \draw[pattern color=blue, pattern = north east lines] (0.75,7) -- (1,7) -- (1,8) -- (0.75,7) -- cycle;
   \draw[pattern color=blue, pattern = north east lines] (0,3) -- (0.5,5) -- (0.25,5) -- (0,4) -- (0,3) --cycle;
   \draw[pattern color=blue, pattern = north east lines] (0,1) -- (1,5) -- (0.75,5) -- (0,2) -- (0,1) --cycle;
   \draw[pattern color=blue, pattern = north east lines] (0.25,-1) -- (0.5,-1) -- (1.75,4) -- (1.5,4) -- (0.25,-1) --cycle;
   \draw[pattern color=blue, pattern = north east lines] (0.75,-1) -- (1,-1) -- (2,3) -- (2,4) -- (0.75,-1) --cycle;
   \draw[pattern color=blue, pattern = north east lines] (1.5,-1) -- (1.75,-1) -- (2.75,3) -- (2.5,3) -- (1.5,-1) --cycle;
   \draw[pattern color=blue, pattern = north east lines] (2.5,-1) -- (2.75,-1) -- (3.5,2) -- (3.25,2) -- (2.5,-1) --cycle;
   \draw[pattern color=blue, pattern = north east lines] (3.25,-1) -- (3.5,-1) -- (4,1) -- (4,2) -- (3.25,-1) --cycle;

   \draw[thick, color=red] (0,10) -- (0.25,11);
   \draw[thick, color=red] (0,9) -- (0.5,11);
   \draw[thick, color=red] (0,8) -- (0.75,11);
   \draw[thick, color=red] (0,7) -- (1,11);
   \draw[thick, color=red] (0.25,7) -- (1,10);
   \draw[thick, color=red] (0.5,7) -- (1,9);
   \draw[thick, color=red] (0.75,7) -- (1,8);
   \draw[thick, color=red] (0,4) -- (0.25,5);
   \draw[thick, color=red] (0,3) -- (0.5,5);
   \draw[thick, color=red] (0,2) -- (0.75,5);
   \draw[thick, color=red] (0,1) -- (1,5);
   \draw[thick, color=red] (0,0) -- (1,4);
   \draw[thick, color=red] (0,-1) -- (1.25,4);
   \draw[thick, color=red] (0.25,-1) -- (1.5,4);
   \draw[thick, color=red] (0.5,-1) -- (1.75,4);
   \draw[thick, color=red] (0.75,-1) -- (2,4);
   \draw[thick, color=red] (1,-1) -- (2,3);
   \draw[thick, color=red] (1.25,-1) -- (2.25,3);
   \draw[thick, color=red] (1.5,-1) -- (2.5,3);
   \draw[thick, color=red] (1.75,-1) -- (2.75,3);
   \draw[thick, color=red] (2,-1) -- (3,3);
   \draw[thick, color=red] (2.25,-1) -- (3,2);
   \draw[thick, color=red] (2.5,-1) -- (3.25,2);
   \draw[thick, color=red] (2.75,-1) -- (3.5,2);
   \draw[thick, color=red] (3,-1) -- (3.75,2);
   \draw[thick, color=red] (3.25,-1) -- (4,2);
   \draw[thick, color=red] (3.5,-1) -- (4,1);
   \draw[thick, color=red] (3.75,-1) -- (4.25,1);
   \draw[thick, color=red] (4,-1) -- (4.5,1);
   \draw[thick, color=red] (4.25,-1) -- (4.75,1);
   \draw[thick, color=red] (4.5,-1) -- (5,1);
   \draw[thick, color=red] (4.75,-1) -- (5,0);
   \draw[thick, color=red] (7,-1) -- (7.25,0);
   \draw[thick, color=red] (7.25,-1) -- (7.5,0);
   \draw[thick, color=red] (7.5,-1) -- (7.75,0);
   \draw[thick, color=red] (7.75,-1) -- (8,0);
   
\end{tikzpicture}	
\caption{}\label{fig:G6cylinder(1,4)}
\end{subfigure}%
\hfill
\begin{subfigure}[b]{0.5\textwidth}
\centering
\begin{tikzpicture}[scale=0.6]
	\draw (0,-1) rectangle (5,0);
	\draw (7,-1) rectangle (8,0);
	\draw (0,-1) rectangle (3,3);
	\draw (0,-1) rectangle (2,4);
	\draw (0,-1) rectangle (1,5);
	\draw (0,-1) rectangle (4,2);
	\draw (0,-1) rectangle (5,1);
	\draw (0,7) rectangle (1,11);
	\draw (1,0) -- (1,3);
	\draw (2,0) -- (2,2);
	\draw (3,0) -- (3,1);
	\draw (0,4) -- (1,4);
	\draw (0,7) -- (1,7);
	\draw (0,8) -- (1,8);
	\draw (0,9) -- (1,9);
	\draw (0,10) -- (1,10);

    \draw [decorate,line width=0.5mm,decoration={brace}] (-0.5,-1) --  (-0.5,11) node[pos=0.5,left=10pt,black,rotate=-90]{$M=6+4m$};
    \draw [decorate,line width=0.5mm,decoration={brace,mirror}] (0,-2.1) --  (8,-2.1) node[pos=0.5,below=10pt,black]{$N$};
    
    \draw [dashed] (5.8,-0.5) -- (6.2,-0.5);
    \draw [dashed] (0.5,5.8) -- (0.5,6.2);

    \draw [decorate, decoration={snake}] (0,5.5) -- (1,5.5);
    \draw [decorate, decoration={snake}] (0,6.5) -- (1,6.5);
    \draw (0,5) -- (0,5.5);
    \draw (1,5) -- (1,5.5);
    \draw (0,6.5) -- (0,7);
    \draw (1,6.5) -- (1,7);
    \draw [decorate, decoration={snake}] (5.5,-1) -- (5.5,0);
    \draw [decorate, decoration={snake}] (6.5,-1) -- (6.5,0);
    \draw (5,-1) -- (5.5,-1);
    \draw (5,0) -- (5.5,0);
    \draw (6.5,-1) -- (7,-1);
    \draw (6.5,0) -- (7,0);


   \draw[pattern color=blue, pattern = north east lines] (0.5,11) -- (1,9) -- (1,10) -- (0.75,11) -- (0.5,11) --cycle;
   \draw[pattern color=blue, pattern = north east lines] (0,10) -- (0.75,7) -- (0.5,7) -- (0,9) -- (0,10) --cycle;
   \draw[pattern color=blue, pattern = north east lines] (0,1) -- (0.5,-1) -- (0.75,-1) -- (0,2) -- (0,1) --cycle;
   \draw[pattern color=blue, pattern = north east lines] (0.5,5) -- (2,-1) -- (2.25,-1) -- (0.75,5) -- (0.5,5) --cycle;
   \draw[pattern color=blue, pattern = north east lines] (2,3) -- (3,-1) -- (3.25,-1) -- (2.25,3) -- (2,3) --cycle;
   \draw[pattern color=blue, pattern = north east lines] (3,2) -- (3.75,-1) -- (4,-1) -- (3.25,2) -- (3,2) --cycle;
   \draw[pattern color=blue, pattern = north east lines] (3.75,2) -- (4,1) -- (4,2) -- (3.75,2) -- cycle;

   \draw[thick, color=red] (0.75,11) -- (1,10);
   \draw[thick, color=red] (0.5,11) -- (1,9);
   \draw[thick, color=red] (0.25,11) -- (1,8);
   \draw[thick, color=red] (0,11) -- (1,7);
   \draw[thick, color=red] (0,10) -- (0.75,7);
   \draw[thick, color=red] (0,9) -- (0.5,7);
   \draw[thick, color=red] (0,8) -- (0.25,7);
   \draw[thick, color=red] (0.25,-1) -- (0,0);
   \draw[thick, color=red] (0.5,-1) -- (0,1);
   \draw[thick, color=red] (0.75,-1) -- (0,2);
   \draw[thick, color=red] (1,-1) -- (0,3);
   \draw[thick, color=red] (1.25,-1) -- (0,4);
   \draw[thick, color=red] (1.5,-1) -- (0,5);
   \draw[thick, color=red] (1.75,-1) -- (0.25,5);
   \draw[thick, color=red] (2,-1) -- (0.5,5);
   \draw[thick, color=red] (2.25,-1) -- (0.75,5);
   \draw[thick, color=red] (2.5,-1) -- (1.25,4);
   \draw[thick, color=red] (2.75,-1) -- (1.5,4);
   \draw[thick, color=red] (3,-1) -- (1.75,4);
   \draw[thick, color=red] (3.25,-1) -- (2.25,3);
   \draw[thick, color=red] (3.5,-1) -- (2.5,3);
   \draw[thick, color=red] (3.75,-1) -- (2.75,3);
   \draw[thick, color=red] (4,-1) -- (3.25,2);
   \draw[thick, color=red] (4.25,-1) -- (3.5,2);
   \draw[thick, color=red] (4.5,-1) -- (3.75,2);
   \draw[thick, color=red] (4.75,-1) -- (4.25,1);
   \draw[thick, color=red] (5,-1) -- (4.5,1);
   \draw[thick, color=red] (5,0) -- (4.75,1);
   \draw[thick, color=red] (7,0) -- (7.25,-1);
   \draw[thick, color=red] (7.25,0) -- (7.5,-1);
   \draw[thick, color=red] (7.5,0) -- (7.75,-1);
   \draw[thick, color=red] (7.75,0) -- (8,-1);
\end{tikzpicture}	
\caption{}\label{fig:G6cylinder(1,-4)}
\end{subfigure}
\caption{Origami \(\mathcal{O}^{(6)}_{N,M}\) with cylinder decomposition in direction $(1,4)$ and \((1,-4)\). Here $\delta_1$ and $\alpha_1$ are the waist curves of the blue cylinders.}
\end{figure}


For direction $(1,4)$ there are two maximal cylinders with waist curve $\delta_1$ of length $2m+6$ and waist curve $\delta_2$ of length $N+2m+9$. We have
\begin{IEEEeqnarray*}{lClClClClCl}
\Omega(\delta_1,\sigma_1)&=&-2, &\quad& \Omega(\delta_1,\sigma_2)&=&-3, &\quad& \Omega(\delta_1,\sigma_3)&=&-4, \\
\Omega(\delta_1,\sigma_4)&=&-5, &     & \Omega(\delta_1,\sigma_5)&=&-5, &     & \Omega(\delta_1,\sigma_N)&=&-5, \\[2mm]
\Omega(\delta_1,\zeta_1) &=&0,  &     & \Omega(\delta_1,\zeta_2) &=&0,  &     & \Omega(\delta_1,\zeta_3) &=&1, \\
\Omega(\delta_1,\zeta_4) &=&1 , &\quad& \Omega(\delta_1,\zeta_5) &=&2,  &     & \Omega(\delta_1,\zeta_M) &=&2+2m, \\[2mm]
\Omega(\delta_2,\sigma_1)&=&-2, &     & \Omega(\delta_2,\sigma_2)&=&-5, &     & \Omega(\delta_2,\sigma_3)&=&-8, \\
\Omega(\delta_2,\sigma_4)&=&-11,&     & \Omega(\delta_2,\sigma_5)&=&-15,&     & \Omega(\delta_2,\sigma_N)&=&-4N+5, \\[2mm]
\Omega(\delta_2,\zeta_1) &=&1,  &     & \Omega(\delta_2,\zeta_2) &=&2,  &     & \Omega(\delta_2,\zeta_3) &=&2  , \\
\Omega(\delta_2,\zeta_4) &=&3,  &     & \Omega(\delta_2,\zeta_5) &=&3,  &     & \Omega(\delta_2,\zeta_M) &=&4+2m.
\end{IEEEeqnarray*}
The element $\Delta:=(N+2m+9)\delta_1-(2m+6)\delta_2\in H_1^{(0)}(\mathcal{O}_{N,M}^{(6)},\mathbb{Z})$ can be written as a linear combination of elements of $B^{(0)}$ as
\begin{align*}
\Delta = & -(N+2m+9)\,\Sigma_1+(2m+6)\,\Sigma_2-(N+2m+9)\,\Sigma_3\\
         &+(2m+6)\,\Sigma_4+(2m+6)\,\Sigma_N \\
         & +(8m+24)\,Z_1 +(4m-N+9)\,Z_2 +(4m-N+9)\,Z_3\\
         &+(4m-N+9)\,Z_4 -(2N+6)\,Z_M.
\end{align*}
The Dehn twist along the waist curves $\delta_1$ and $\delta_2$ of the maximal cylinders acts on the non-tautological part of the absolute homology via the mapping
\begin{displaymath}
  D_\delta\colon v\longmapsto v + (N+2m+9)\,\Omega(\delta_1,v)\,\delta_1+(2m+6)\,\Omega(\delta_2,v)\,\delta_2.
\end{displaymath}
If we evaluate the elements of the basis $B^{(0)}$ of $H_1^{(0)}(\mathcal{O}_{N,M}^{(6)},\mathbb{Q})$, then we get
\begin{IEEEeqnarray*}{lClClClClCl}
D_\delta(\Sigma_1)&=&\Sigma_1+\Delta,   &\quad& D_\delta(\Sigma_2)&=&\Sigma_2+2\,\Delta,      &\quad& D_\delta(\Sigma_3)&=&\Sigma_3+3\,\Delta, \\
D_\delta(\Sigma_4)&=&\Sigma_4+5\,\Delta,&\quad& D_\delta(\Sigma_N)&=&\Sigma_n+(2N-5)\,\Delta, &\quad&  && \\
D_\delta(Z_1)     &=& Z_1 ,             &\quad& D_\delta(Z_2)     &=&Z_2+\Delta,              &\quad&  D_\delta(Z_3)&=&Z_3+\Delta, \\
D_\delta(Z_4)     &=& Z_4+2\,\Delta,    &\quad& D_\delta(Z_M)     &=&Z_M+(2+2m)\,\Delta.      &\quad&  &&
\end{IEEEeqnarray*}

We have two maximal cylinders in direction $(1,-4)$ with waist curve $\alpha_1$ of combinatorial length $4+m$ and waist curve $\alpha_2$ of combinatorial length $N+3m+11$. We calculate the following intersection points with the elements of the basis $B^{(0)}$:
\begin{IEEEeqnarray*}{lClClClClCl}
\Omega(\alpha_1,\sigma_1)&=&1, &\quad& \Omega(\alpha_1,\sigma_2)&=&1, &\quad& \Omega(\alpha_1,\sigma_3)&=&2 , \\
\Omega(\alpha_1,\sigma_4)&=&4, &\quad& \Omega(\alpha_1,\sigma_5)&=&4, &\quad& \Omega(\alpha_1,\sigma_N)&=&4 , \\[2mm]
\Omega(\alpha_1,\zeta_1) &=&0, &     & \Omega(\alpha_1,\zeta_2) &=&0, &     & \Omega(\alpha_1,\zeta_3) &=&1 , \\
\Omega(\alpha_1,\zeta_4) &=&1, &     & \Omega(\alpha_1,\zeta_5) &=&1, &     & \Omega(\alpha_1,\zeta_M) &=&1+m , \\[2mm]
\Omega(\alpha_2,\sigma_1)&=&3, &     & \Omega(\alpha_2,\sigma_2)&=&7, &     & \Omega(\alpha_2,\sigma_3)&=&10 , \\
\Omega(\alpha_2,\sigma_4)&=&12,&     &\Omega(\alpha_2,\sigma_5) &=&16,&     & \Omega(\alpha_2,\sigma_N)&=&N-4 , \\[2mm]
\Omega(\alpha_2,\zeta_1) &=&1, &     & \Omega(\alpha_2,\zeta_2) &=&2, &     & \Omega(\alpha_2,\zeta_3) &=&2 , \\
\Omega(\alpha_2,\zeta_4) &=&3, &     & \Omega(\alpha_2,\zeta_5) &=&4, &     & \Omega(\alpha_2,\zeta_M) &=&5+3m.
\end{IEEEeqnarray*}
With this information we can write the element $A:=(N+3m+11)\alpha_1-(4+m)\alpha_2$ in the basis $B^{(0)}$:
\begin{align*}
A= & \quad (m+4)\,\Sigma_1+ (m+4) \, \Sigma_2 -(N+3m+11)\,\Sigma_3\\
   &+(m+4)\,\Sigma_4 +(m+4)\,\Sigma_N \\
   & -(4m+16)\,Z_1+(2N+4m+14)\,Z_2+(N-1)\,Z_3\\
   &-(4m+16)\, Z_4 +(N-1)\,Z_M.
\end{align*}
The map
\begin{displaymath}
D_\alpha\colon v \longmapsto v + (N+3m+11)\,\Omega(\alpha_1,v)\,\alpha_1 +(m+4)\,\Omega(\alpha_2,v)\,\alpha_2
\end{displaymath}
has images
\begin{IEEEeqnarray*}{lClClClClCl}
D_\alpha(\Sigma_1)&=&\Sigma_1-A, &\quad& D_\alpha(\Sigma_2)&=&\Sigma_2-A         &\quad& D_\alpha(\Sigma_3)&=&\Sigma_3    , \\
D_\alpha(\Sigma_4)&=&\Sigma_4-A, &     & D_\alpha(\Sigma_N)&=&\Sigma_N-(N-4)\,A, &     &    && \\
D_\alpha(Z_1)     &=&Z_1,        &     & D_\alpha(Z_2)     &=&Z_2 + A,           &\quad& D_\alpha(Z_3)&=&Z_3 + A, \\
D_\alpha(Z_4)     &=&Z_4+A,      &\quad& D_\alpha(Z_M)     &=&Z_M+(m+1)\, A.     &     &    && 
\end{IEEEeqnarray*}

In horizontal direction we have six maximal cylinders with moduli $1/(M-5)$, $2/1$, $3/1$, $4/1$, $5/1$ and $N/1$ respectively in vertical direction there are six maximal cylinders with moduli $1/(N-5)$, $2/1$, $3/1$, $4/1$, $5/1$ and $N/1$. As in the sections before we can calculate representation matrices for the action of the associated Dehn twists on $H_1^{(0)}(\mathcal{O}_{N,M}^{(6)},\mathbb{Q})$. In horizontal direction we have 
\begin{displaymath}
M_h^{(0)}=
\left(\begin{array}{cccccccccc}
1 & 0 & 0 & 0 & 0 &   0  &  0  &  0  & 30N & 30N \\
0 & 1 & 0 & 0 & 0 &   0  &  0  & 20N & 20N & 20N \\
0 & 0 & 1 & 0 & 0 &   0  & 15N & 15N & 15N & 15N \\
0 & 0 & 0 & 1 & 0 & 12N  & 12N & 12N & 12N & 12N \\
0 & 0 & 0 & 0 & 1 & -60  &-120 &-180 &-240 &-60(M-1)\\
0 & 0 & 0 & 0 & 0 &  1   &  0  &  0  &  0  &  0  \\
0 & 0 & 0 & 0 & 0 &  0   &  1  &  0  &  0  &  0  \\
0 & 0 & 0 & 0 & 0 &  0   &  0  &  1  &  0  &  0  \\
0 & 0 & 0 & 0 & 0 &  0   &  0  &  0  &  1  &  0  \\
0 & 0 & 0 & 0 & 0 &  0   &  0  &  0  &  0  &  1  \\
\end{array}\right)
\end{displaymath}
and in vertical direction we get
\begin{displaymath}
M_v^{(0)}=
\left(\begin{array}{cccccccccc}
1    &  0  &  0  &  0  &  0      & 0 & 0 & 0 & 0 & 0 \\
0    &  1  &  0  &  0  &  0      & 0 & 0 & 0 & 0 & 0 \\
0    &  0  &  1  &  0  &  0      & 0 & 0 & 0 & 0 & 0 \\
0    &  0  &  0  &  1  &  0      & 0 & 0 & 0 & 0 & 0 \\
0    &  0  &  0  &  0  &  1      & 0 & 0 & 0 & 0 & 0 \\
0    &  0  &  0  &-30M &-30M     & 1 & 0 & 0 & 0 & 0 \\
0    &  0  &-20M &-20M &-20M     & 0 & 1 & 0 & 0 & 0 \\
0    &-15M &-15M &-15M &-15M     & 0 & 0 & 1 & 0 & 0 \\
-12M &-12M &-12M &-12M &-12M     & 0 & 0 & 0 & 1 & 0 \\
 60  & 120 & 180 & 240 & 60(N-1) & 0 & 0 & 0 & 0 & 1 
\end{array}\right).
\end{displaymath}

\subsection{Finding a family of candidates in genus six}
Recall that we obtained elements $\Gamma,~\Delta,~A$ of the non-tautological part $H_1^{(0)}(\mathcal{O}_{N,M}^{(6)},\mathbb{Z})$ of the absolute homology by comparing the waist curves of the maximal cylinders in direction $(1,-2)$, $(1,4)$ and $(1,-4)$. We wrote them as a linear combination of elements of 
$B^{(0)}$ in the following way:
\begin{align*}
A=       & \quad (m+4)\,\Sigma_1+ (m+4) \, \Sigma_2 -(N+3m+11)\,\Sigma_3 \\
         &+(m+4)\,\Sigma_4 +(m+4)\,\Sigma_N \\
         & -(4m+16)\,Z_1+(2N+4m+14)\,Z_2+(N-1)\,Z_3\\
         &-(4m+16)\, Z_4 +(N-1)\,Z_M \\[2mm]
\Gamma = & \quad 4\,\Sigma_1 +4\,\Sigma_2-(N+4m+11)\,\Sigma_3\\
         &+4\,\Sigma_4 +4\,\Sigma_N \\
         &- 8\,Z_1+(N+4m+7)\,Z_2-8\,Z_3+(N+4m+7)\,Z_4-8\,Z_M \\[2mm]
\Delta = & -(N+2m+9)\,\Sigma_1+(2m+6)\,\Sigma_2-(N+2m+9)\,\Sigma_3\\
         &+(2m+6)\,\Sigma_4+(2m+6)\,\Sigma_N \\
         & +(8m+24)\,Z_1 +(4m-N+9)\,Z_2 +(4m-N+9)\,Z_3 \\
         &+(4m-N+9)\,Z_4 -(2N+6)\,Z_M.
\end{align*}
Let $W=\text{Span}_\mathbb{Q}(A,\,\Gamma,\,\Delta\}$ the $\mathbb{Q}$-linear subspace of $H_1^{(0)}(\mathcal{O}_{N,M}^{(6)},\mathbb{Q})$ spanned by $A,~\Gamma$ and $\Delta$. The three maps $D_\alpha,~D_\gamma$ and $D_\delta$ are transvections on $H_1^{(0)}(\mathcal{O}_{N,M}^{(6)},\mathbb{Q})$ and if we restrict them to the subspace $W$, they have the following three matrix representations with respect to $\{A,\,\Gamma,\,\Delta\}$
\begin{displaymath}
\begin{pmatrix}
1 & b & a \\
0 & 1 & 0 \\
0 & 0 & 1
\end{pmatrix},
\qquad
\begin{pmatrix}
 1 & 0 & 0 \\
-b & 1 & c \\
 0 & 0 & 1
\end{pmatrix},
\qquad
\begin{pmatrix}
 1 & 0  & 0 \\
 0 & 1  & 0 \\
-a & -c & 1
\end{pmatrix},
\end{displaymath}
where $b=2-2N$, $a=-10\,N+12\,m-4\,mN+42$ and $c=16\,m+24-8\,N$. The element $e:=c\,A-a\,\Gamma+b\,\Delta$ is invariant under the elements $(D_\alpha)|_W,~(D_\gamma)|_W$, $(D_\delta)|_W\in\text{Sp}_\Omega(W)$ and an element of the nullspace $W^\Omega$. The restrictions of $D_\alpha,~D_\gamma$ and $D_\delta$ to the subspace $W$ have the following matrix representations with respect to the basis $\{A,\,\Gamma\,e\}$:
\begin{displaymath}
\begin{pmatrix}
1 & b & 0 \\
0 & 1 & 0 \\
0 & 0 & 1
\end{pmatrix},
\qquad
\begin{pmatrix}
 1 & 0 & 0 \\
-b & 1 & 0 \\
 0 & 0 & 1
\end{pmatrix},
\qquad
\begin{pmatrix}
 \frac{ac}{b}+1  & \frac{c^2}{b}   & 0 \\
 \frac{a^2}{b}   & \frac{ac}{b}+1  & 0 \\
-\frac{a}{b}     & -\frac{c}{b}      & 1
\end{pmatrix}.
\end{displaymath}
If we choose $c=0$ or $N=3+2m$, then we can easily find an element of the unipotent radical of $\text{Sp}_\Omega(W)$ in the subgroup generated by the three transvections $(D_\alpha)|_W,~(D_\gamma)|_W$ and $(D_\delta)|_W$.

\subsection{Zariski density and arithmeticity for a genus six family}
We consider in this subsection the family of origamis $\mathcal{O}_{N,M}^{(6)}$, where $M=6+4m$, $N=3+2m$ and $m\in\mathbb{N}$. As before in the genus four and five section we try first to determine an infinite family of natural numbers$m\in\mathbb{N}$ for which the matrix $A=A_6(N,M)=M_h^{(0)}\cdot M_v^{(0)}\in\mathbb{R}^{10\times 10}$ is Galois pinching. 
The characteristic polynomial 
  \[
  P(X):=\chi_A(X)=\sum_{i=0}^{10} a_i\,X^i\in\mathbb{Z}[X]
  \]
of the matrix $A=A_6(N,M)\in\mathbb{R}^{10\times 10}$ is monic reciprocal, i.e. $a_{10}=a_0=1$ and $a_i=a_{10-i}$ for $i=1,\dots,5$. Hence there is a cubic polynomial 
\begin{align}\label{Eq:CubicQG=6}
  Q(Y)=Y^5+\sum_{i=0}^4 b_i\,Y^i\in\mathbb{Q}[Y]
\end{align}
such that $1/X^5\cdot P(X)=Q(X+1/X+2)$. The coefficients of $Q(Y)\in\mathbb{Q}[Y]$ are
\begin{align*}
b_4 = & ~a_1-10, \quad b_3 = ~a_2-8\,a_1+35, \quad b_2 =  ~a_3-6\,a_2+20\,a_1-50,\\
b_1 = & ~a_4-4\,a_3+9\,a_2-16\,a_1+25,\quad b_0 =  ~a_5-2\,a_4+2\,a_3-2\,a_2+2\,a_1-2.
\end{align*}
Denote the sextic Weber resolvent of $Q(Y)\in\mathbb{Q}[Y]$ again by $SWR_Q(Y)\in\mathbb{Q}[Y]$. For $m\equiv 2$ modulo $89$ we have that $P(X)=\chi_A(X)$ can be written by irreducible factors modulo $89$ as
\begin{IEEEeqnarray*}{lCl}
    P(X) &\equiv & X^{10} + 4\,X^9 + 63\,X^8 + 33\,X^7 + 39\,X^6 + 71\,X^5 \\
         &       &+ 39\,X^4 + 33\,X^3 + 63\,X^2 + 4\,X + 1~\mbox{modulo $89$}.
\end{IEEEeqnarray*}

Furthermore for $m\equiv 2$ modulo $17$ respectively $m\equiv 2$ modulo $19$ we have that $Q(Y)$ respectively $SWR_Q(Y)$ factorizes as
\begin{align*}\label{Eq:SWR}
Q(Y)     &\equiv Y^5 +4\,Y^4 +Y^2 +6\,Y+16~\mbox{modulo $17$} \qquad \mbox{and}\\
SWR_Q(Y) &\equiv Y^6 +13\,Y^5 +7\,Y^4 +3\,Y^3 +15\,Y^2 +17\,Y+16~\mbox{modulo $19$.}
\end{align*}
Hence $Q(Y)$ is irreducible modulo $17$ and $SWR_Q(Y)$ is irreducible modulo $19$. Thus $P(X)\in\mathbb{Q}[X]$ is irreducible for $m\equiv 2$ modulo $89$ and we identify its Galois group $\text{Gal}(P)$ of $P(X)\in\mathbb{Z}[X]$ again with a subgroup of the hyperoctahedral group $G_5=\mathbb{Z}_2^5\rtimes S_5$.

To ensure that $A=A_6(N,M)$ is Galois pinching we need that $\text{Gal}(P)$ projects surjectively onto $S_5$ or equivalent that $\text{Gal}(Q)=S_5$. With
Theorem \ref{Thm:SexticWR} and Remark \ref{Rem:SWRFullGG} it suffices to find $m\in\mathbb{N}$ such that the discriminant $\text{Disc}(SWR_Q)=\text{Disc}(Q)$ is not a rational square and that the sextic Weber resolvent $SWR_Q(Y)\in\mathbb{Q}[Y]$ does not have a rational root. We have
\begin{displaymath}
\text{Disc}(Q)=c\, f(m)\, (2\,m+3)^{24}
\end{displaymath}
for $c>0$ and an irreducible polynomial $f(m)\in\mathbb{Z}[m]$ of degree $16$. With Siegel's theorem of integral points and the equations above we conclude that $\text{Gal}(Q)=S_5$ for all but finitely many $m\in\mathbb{N}$ with $m\equiv 2 $ modulo $p\in\{17,\,19\}$.

Next we want to restrict $m\in\mathbb{N}$ further such that $\text{Gal}(P)\neq S_5$ and $\text{Gal}(P)\neq H_{5,i}$ for $i=1,2,3$. For the expressions $\Delta_{5,1}=\delta_{5,1}^2$ and $\Delta_{5,2}=\delta_{5,2}^2$ from Lemma \ref{Lem:MaxSubDiscr} and Remark \ref{Rem:SqDelta} we get
\begin{align*}
\Delta_{5,1} & =Q(0)\,Q(4)=c\cdot g(m)\cdot(m-1)(4m+1)(2m+3)^8 \quad\text{and} \\
\Delta_{5,2} & =\text{Disc}(Q) \cdot \Delta_{5,1},
\end{align*}
where $c\in\mathbb{Z}$ and $g(m)\in\mathbb{Z}[m]$ is an irreducible polynomial of degree $10$. This shows that $\text{Gal}(P)\neq H_{5,i}$ for $i=1,2$ and almost all $m\in\mathbb{N}$ with $m\equiv 2$ modulo $p$, where $p\in\{17,\,19,\,89\}$. Furthermore if $m\equiv 2$ modulo $29$ then $P(X)$ can be written in irreducible factors modulo $29$ as 
\begin{IEEEeqnarray*}{lCl}
    P(X) &\equiv& (X+ 15)(X + 2)(X^2 + 7X + 7)(X^2 + X + 25) \\
         &      &(X^4 + 22X^3 + 21X^2 + 22X + 1)~\text{modulo}~29.
\end{IEEEeqnarray*}

Since $29$ does not divide the discriminant $\text{Disc}(P)$ of $P(X)\in\mathbb{Q}[X]$ for $m\equiv 2$ modulo $29$, we conclude with the theorem of Dedekind that in this case $\text{Gal}(P)$ contains a permutation of cycle type $(4,2,2,1,1)$. But $H_{5,3}$ does not contain a permutation of this cycle type as we showed in Appendix \ref{Sect:PermTypes}.

The discussion from above almost showed:

\begin{proposition}
The matrix $A_6(N,M)\in\mathbb{R}^{10\times 10}$ is Galois pinching for all but perhaps finitely many $m\in\mathbb{N}$ with $m\equiv 2$ modulo $p$, where $p\in\{17,\,19,\,29,\,89\}$.
\end{proposition}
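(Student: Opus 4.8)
The plan is to assemble the ingredients prepared above, following verbatim the pattern of the genus four and five cases. Recall that, by condition (a) of \S\ref{ss.general-strategy}, the symplectic matrix $A=A_6(N,M)$ is Galois-pinching exactly when its characteristic polynomial $P(X)=\chi_A(X)\in\mathbb{Z}[X]$ is irreducible over $\mathbb{Z}$, splits over $\mathbb{R}$ with simple spectrum, and has Galois group as large as possible, i.e. $\text{Gal}(P)\cong G_5=\mathbb{Z}_2^5\rtimes S_5$. One works throughout inside the single residue class modulo $17\cdot 19\cdot 29\cdot 89$ cut out by $m\equiv 2$ modulo each of those four primes, so that all the congruences displayed above apply at once, and it then remains only to discard finitely many $m$ in this progression.

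First I would settle irreducibility together with the surjectivity of $\phi\colon\text{Gal}(P)\to S_5$. For $m\equiv 2$ modulo $89$ the displayed factorization shows $P(X)$ irreducible modulo $89$, hence over $\mathbb{Q}$, and the formalism of \S\ref{SubSec:RecHyp} realizes $\text{Gal}(P)\leq G_5$. For $\phi$ to be onto it suffices that $\text{Gal}(Q)=S_5$, where $Q(Y)$ is the quintic with $\frac{1}{X^5}P(X)=Q(X+\frac1X+2)$. I would check this as follows: $Q(Y)$ is irreducible modulo $17$, so $\text{Gal}(Q)$ is transitive on the roots of $Q$; the Weber sextic resolvent $SWR_Q(Y)$ is irreducible modulo $19$, so $SWR_Q$ has no rational root and $\text{Gal}(Q)$ is non-solvable by Theorem \ref{Thm:SexticWR}, hence $\text{Gal}(Q)\in\{A_5,S_5\}$ by Remark \ref{Rem:SWRFullGG}; finally $\text{Disc}(Q)=c\,f(m)(2m+3)^{24}$ with $f$ irreducible of degree $16$, so Siegel's theorem on integral points makes $\text{Disc}(Q)$ a rational square for only finitely many $m$, which excludes $A_5$ and leaves $\text{Gal}(Q)=S_5$.

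Next I would pin down $\text{Gal}(P)=G_5$ itself. By Proposition \ref{Prop:SubgrPhiOnto}, once $\phi(\text{Gal}(P))=S_5$ the only candidates are $G_5$, the standard copy of $S_5$, and $H_{5,1},H_{5,2},H_{5,3}$. By Lemma \ref{Lem:MaxSubDiscr} and Remark \ref{Rem:SqDelta}: $\text{Gal}(P)\subseteq H_{5,1}$ would force $\delta_{5,1}\in\mathbb{Q}$, i.e. $\Delta_{5,1}=Q(0)Q(4)=c\,g(m)(m-1)(4m+1)(2m+3)^8$ to be a rational square, and $\text{Gal}(P)\subseteq H_{5,2}$ would force $\Delta_{5,2}=\text{Disc}(Q)\cdot\Delta_{5,1}$ to be a rational square. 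Since $g$ is irreducible of degree $10$, the squarefree parts of $\Delta_{5,1}$ and $\Delta_{5,2}$ (as polynomials in $m$) have degree at least $10$, so a further appeal to Siegel's theorem, as before, excludes both for all but finitely many $m$; since the standard $S_5$ lies in $H_{5,1}$, this simultaneously excludes $\text{Gal}(P)\cong S_5$. To discard the last candidate $H_{5,3}$, I would use Dedekind's theorem (\S\ref{SubSec:GalGasPG}): for $m\equiv 2$ modulo $29$ the prime $29$ does not divide $\text{Disc}(P)$, and the displayed factorization of $P(X)$ modulo $29$ has cycle type $(4,2,2,1,1)$ on the ten roots, so $\text{Gal}(P)$ contains a permutation of that type; by the computations of Appendix \ref{Sect:PermTypes} no element of $H_{5,3}$ has this cycle type, whence $\text{Gal}(P)=G_5$.

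The remaining requirement, that $P(X)$ splits over $\mathbb{R}$ with distinct roots, I would treat exactly as in \cite{bonnafoux22} and the genus four and five cases: $A_6(N,M)=M_h^{(0)}M_v^{(0)}$ is the product of a unipotent upper block-triangular matrix with off-diagonal $5\times 5$ block $U_h$ and a unipotent lower block-triangular matrix with off-diagonal block $U_v$, so the substitution $\frac{1}{X^5}P(X)=Q(X+\frac1X+2)$ identifies $Q$, up to the shift $Y=t+4$, with the characteristic polynomial of $U_hU_v$; after the appropriate change of basis $U_hU_v$ is similar to a product of two symmetric positive semidefinite matrices built from the cylinder--incidence data of the ``stair'' $\mathcal{O}_{N,M}^{(6)}$, hence has real non-negative spectrum, and the relevant incidence matrix is primitive, which forces five distinct positive eigenvalues and so ten distinct positive real roots of $P$. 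Combining the three parts proves the proposition. I expect the main obstacle to be the uniformity in $m$: one must confirm that the asserted irreducible factorizations of $f(m)$ (degree $16$) and $g(m)$ (degree $10$) really hold — typically via one more reduction modulo a well-chosen prime — so that Siegel's theorem is legitimately applicable, and that the Thurston--Veech positivity/primitivity argument for real-splitting survives for the entire family and not merely for sporadic $m$.
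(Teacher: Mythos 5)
Your argument reproduces the paper's proof: irreducibility of $P$ via the factorization modulo $89$, surjectivity of $\phi$ onto $S_5$ via irreducibility of $Q$ modulo $17$, irreducibility of the Weber sextic resolvent modulo $19$ together with Siegel's theorem applied to $\mathrm{Disc}(Q)=c\,f(m)(2m+3)^{24}$, exclusion of $H_{5,1}$ and $H_{5,2}$ (hence also of the standard copy of $S_5$) via Siegel applied to $\Delta_{5,1}$ and $\Delta_{5,2}$, and exclusion of $H_{5,3}$ via Dedekind's theorem modulo $29$ and the cycle-type list of Appendix \ref{Sect:PermTypes} --- exactly the paper's route. Your closing paragraph on real splitting of $P$ is additional material that the paper's own proof of this proposition does not contain (it is implicitly deferred to the arguments of \cite{bonnafoux22}); your sketch there is plausible but, as you yourself note, would still require checking the positivity/primitivity details, so relative to the paper's proof there is no gap.
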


\begin{proof}
Let $m\in\mathbb{N}$ such that the quintic polynomial $Q(Y)\in\mathbb{Q}[Y]$ from Equation (\ref{Eq:CubicQG=6}) is irreducible. By substituting $t=Y-b_4/5$ we can bring $Q(Y)$ in depressed form
  \[
  DQ(t)=t^5+p\,t^3+q\,t^2+r\,t+s\in\mathbb{Q}[t].
  \]
By an analysis of the four discriminants $F_i(DQ)$ ($i=1,2,3,4)$ from (\ref{Eq:Dis5RealRoots}), for the polynomial $DQ(t)$ from above, we can easily see that for all $i=1,2,3,4$ and $m$ big enough the inequalty $F_i(DQ)$ holds. From \cite{Hou96} we know that in this case the depressed polynomial $DQ(t)$ and hence $Q(Y)$ has five real roots. Denote the roots of $Q(Y)$ by $\mu_i$ $(i=1,\dots,5)$. We have the equality
  \[
  \mu_i=\lambda_i+\lambda_i^{-1}+2
  \]
for all $i=1,\dots,5$, where $\lambda_i$ and $\lambda_i^{-1}$ are roots of the reciprocal characteristic polynomial $P(X)$ of $A_6(N,M)$. Furthermore for $m$ big enough all the coefficients $b_i$ ($i=1,\dots,5$) of $Q(Y)$ are positive. With Decarte's rule of signs we conclude that in this situation all the roots $\mu_i$ $(i=1,\dots,5)$ are negative real numbers. As before in \ref{Prop:GalPinG4} we know now that the roots $\{\lambda_i\,\lambda_i^{-1}\mid i=1,\dots,5\}$ of $P(X)$ are real. Putting this argument together with the arguments we did before for $\text{Gal}(P)$, we see that $A_6(N,M)$ is Galois pinching for $m\in\mathbb{N}$ big enough such that $m\equiv 2$ modulo $p$, where $p\in\{17,\,19,\,29,\,89\}$.
\end{proof}

Denote by $B_6(N,M)=M_\gamma^{(0)}\in\mathbb{R}^{10\times 10}$ the representation matrix with respect to the basis $B^{(0)}$ of the map $D_\gamma$ on the non-tautological $H_1^{(0)}(\mathcal{O}_{N,M}^{(6)},\mathbb{R})$ from Subsection \ref{SubSec:DTG6}. Then $B_6(N,M)$ is unipotent and the subspace 
  \[
  (B_6(N,M)-\textrm{Id})(\mathbb{R}^{10})
  \]
is one-dimensional and hence not a Lagrangian subspace with respect to $\Omega$. Furthermore $A_6(N,M)$ and $B_6(N,M)$ do not commute but perhaps for finitely many $m\in\mathbb{N}$. Putting thhis together with the previous Proposition about the matrix $A_6(N,M)$, Proposition 4.3 in \cite{matheus15} and Theorem 9.10 in \cite{prasad14} implies:

\begin{theorem}\label{t.A.g6}
The genus six Origamis $\mathcal{O}_{N,M}^{(6)}\in\mathcal{H}(10)$ with $N=3+2m$ and $M=6+4m$ have Kontsevich-Zorich monodromies with finite index in $\text{Sp}((H_1^{(0)}(\mathcal{O}_{N,M}^{(6)},\mathbb{Z}))$ for all but finitely many $m\in\mathbb{N}$ such that $m\equiv 2$ modulo $p$, where $p\in\{17,19,29,89\}$.
\end{theorem}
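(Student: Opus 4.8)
The plan is to run the same two-step scheme that established Theorems \ref{t.A.g4} and \ref{t.A.g5}. In the first step one deduces that the Kontsevich--Zorich monodromy of $\mathcal{O}_{N,M}^{(6)}$ is Zariski-dense in $\mathrm{Sp}(H_1^{(0)}(\mathcal{O}_{N,M}^{(6)},\mathbb{R}))$ from the fact that $A_6(N,M)$ is Galois-pinching; in the second step one upgrades Zariski-density to arithmeticity via the Singh--Venkataramana criterion recalled in Subsection \ref{ss.general-strategy}, fed by the three Dehn twists in the directions $(1,-4)$, $(1,-2)$ and $(1,4)$ analysed in the subsection on candidates in genus six.

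For the Zariski-density step I would start from the Proposition just proved: for all but finitely many $m\equiv 2\pmod p$ with $p\in\{17,19,29,89\}$ the matrix $A:=A_6(N,M)=M_h^{(0)}M_v^{(0)}$ is Galois-pinching. I then take $B:=B_6(N,M)=M_\gamma^{(0)}$, the representation matrix of the Dehn twist $D_\gamma$ in the direction $(1,-2)$ computed in Subsection \ref{SubSec:DTG6}. Reading off the action of $D_\gamma$ on the basis $B^{(0)}$ shows that $B$ is unipotent, that $(B-\mathrm{Id})(\mathbb{R}^{10})=\mathbb{R}\Gamma$ is one-dimensional and hence not a Lagrangian subspace of $H_1^{(0)}(\mathcal{O}_{N,M}^{(6)},\mathbb{R})$, and that the entries of the commutator of $A$ and $B$ are polynomials in $m$ that do not all vanish identically, so $A$ and $B$ fail to commute for all but finitely many $m$. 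Theorem 9.10 of \cite{prasad14} together with Proposition 4.3 of \cite{matheus15} then gives that $\langle A,B\rangle$, and hence the full Kontsevich--Zorich monodromy, is Zariski-dense for all these $m$.

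For the arithmeticity step I would fix $N=3+2m$ (equivalently $c=16m+24-8N=0$) and take $T_1:=D_\alpha$, $T_2:=D_\gamma$, $T_3:=D_\delta$, the Dehn twists in directions $(1,-4)$, $(1,-2)$, $(1,4)$. Each $T_n$ is a transvection with $(T_n-\mathrm{Id})(H_1^{(0)}(\mathcal{O}_{N,M}^{(6)},\mathbb{Z}))=\mathbb{Z}w_n$, where $w_1=A$, $w_2=\Gamma$, $w_3=\Delta$; these three homology classes span the subspace $W=\mathrm{Span}_\mathbb{Q}(A,\Gamma,\Delta)$, which is not isotropic because $\Omega|_W$ has a two-dimensional symplectic part and a one-dimensional radical $\mathbb{Q}e$ with $e=c\,A-a\,\Gamma+b\,\Delta$. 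Using the explicit $3\times 3$ matrices for $(D_\alpha)|_W$, $(D_\gamma)|_W$, $(D_\delta)|_W$ in the basis $\{A,\Gamma,e\}$, together with $b=2-2N\neq 0$ and $a=-10N+12m-4mN+42\neq 0$ for all relevant $m$, a suitable product of powers of $(D_\gamma)|_W$ and $(D_\delta)|_W$ — the analogue of the genus four element $(D_\chi|_W)^{-2b^2}(D_\gamma|_W)^{a^2}$, for instance $(D_\gamma|_W)^{a^2}(D_\delta|_W)^{b^2}$, which equals a non-trivial transvection along $\mathbb{Q}e$ — lies in $\langle(D_\alpha)|_W,(D_\gamma)|_W,(D_\delta)|_W\rangle$ and is a non-trivial element of the unipotent radical of $\mathrm{Sp}_\Omega(W)$. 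All hypotheses of the Singh--Venkataramana criterion being met, the Kontsevich--Zorich monodromy of $\mathcal{O}_{N,M}^{(6)}$ has finite index in $\mathrm{Sp}(H_1^{(0)}(\mathcal{O}_{N,M}^{(6)},\mathbb{Z}))$, which is the assertion of the theorem.

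Since the scheme is identical to the genus four and five cases, the only real work is bookkeeping: verifying that the $3\times 3$ matrices for $(D_\alpha)|_W$, $(D_\gamma)|_W$, $(D_\delta)|_W$ remain valid after the substitution $N=3+2m$, confirming that the subgroup they generate really contains a transvection along the radical direction $\mathbb{Q}e$ (which, exactly as in genus four, reduces to the non-vanishing of $a$ and $b$), and checking the polynomial-in-$m$ non-commutativity of $A_6(N,M)$ and $B_6(N,M)$. I expect pinning down the exact word in the restricted transvections realising the unipotent radical element to be the most finicky point, but it is entirely routine given the genus four template.
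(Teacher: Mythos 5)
Your proposal is correct and follows the paper's own proof essentially verbatim: Zariski density from the Galois-pinching matrix $A_6(N,M)$ together with the unipotent $B_6(N,M)=M_\gamma^{(0)}$, whose image $(B_6(N,M)-\mathrm{Id})(\mathbb{R}^{10})=\mathbb{R}\Gamma$ is one-dimensional (hence non-Lagrangian) and which fails to commute with $A_6(N,M)$ for all but finitely many $m$, followed by the Singh--Venkataramana criterion applied to the three transvections $D_\alpha$, $D_\gamma$, $D_\delta$ restricted to $W$ with $c=0$ (i.e.\ $N=3+2m$), exactly as in the subsection on candidates in genus six. Your explicit word $(D_\gamma|_W)^{a^2}(D_\delta|_W)^{b^2}$, which in the basis $\{A,\Gamma,e\}$ equals the map $A\mapsto A-ab\,e$, $\Gamma\mapsto\Gamma$, $e\mapsto e$, merely makes concrete the unipotent-radical element the paper leaves implicit, and since $b=-4-4m$ and $a=-8m^2-20m+12$ never vanish for $m\in\mathbb{N}$, it is non-trivial as required.
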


\newpage
\appendix

\section{Cycle types}\label{Sect:PermTypes}

We call $\varphi_k$ the map that identifies the group $H_{k,3}\leq G_k$ from Proposition \ref{Prop:SubgrPhiOnto} with a subgroup of the permutation group $\text{Sym}(\{\lambda_i,\lambda_i^{-1}\mid i=1,\dots,k\})$. 

We first consider $k=3$ and $\varphi_3\colon H_{3,3}\to \text{Sym}(\{\lambda_i,\lambda_i^{-1}\mid i=1,2,3\})$. We have
\begin{align*}
\varphi_3((-1,-1,-1),(123))=&(\lambda_1\lambda_2^{-1}\lambda_3\lambda_1^{-1}\lambda_2\lambda_3^{-1}), \\
\varphi_3((-1,-1,-1),(1,2))=&(\lambda_1\lambda_2^{-1})(\lambda_2\lambda_1^{-1})(\lambda_3 \lambda_3^{-1}),\\
\varphi_3((+1,+1,+1),(123))=&(\lambda_1\lambda_2 \lambda_3)(\lambda_1^{-1} \lambda_2^{-1} \lambda_3^{-1}) \\
\varphi_3((+1,+1,+1),(1,2))=&(\lambda_1\lambda_2)(\lambda_1^{-1} \lambda_2^{-1})(\lambda_3)(\lambda_3^{-1}).
\end{align*}
The element $(123)\in S_3$ is of cycle type $(3)$ and $(12)\in S_3$ is of type $(2,1)$. These two cycle types are the only non-trivial cycle types that can appear in $S_3$. Hence the calculations from above show that the only non-trivial cycle types that occur in $\varphi_3(H_{3,3})\leq \text{Sym}(\{\lambda_i,\lambda_i^{-1}\mid i=1,2,3\})$ are $(6),(3,3),(2,2,2)$ and $(2,2,1,1)$.

For $k=5$ the permutation group $S_5$ has permutations of type $(5)$, $(4,1)$, $(3,2)$, $(3,1,1)$, $(2,2,1)$, $(2,1,1,1)$ and $(1,1,1,1,1)$. If we want to determine the cycle types of the elements of $H_{5,3}$ as a subgroup of $\text{Sym}(\{\lambda_i,\,\lambda_i^{-1}\mid i=1,\dots,5\})$, then because of symmetry reasons it is sufficient to determine the permutations $\varphi_5((-1,\dots,-1),\sigma))$ and $\varphi_5((+1,\dots,+1),\sigma))$, where $\sigma\in S_5$ is a represent of a cycle type of $S_5$ as above. We have
\begin{align*}
\varphi_5((-1,\dots,-1),(12345))=   &(\lambda_1\lambda_2^{-1}\lambda_3\lambda_4^{-1}\lambda_5\lambda_1^{-1}\lambda_2\lambda_3^{-1}\lambda_4\lambda_5^{-1})\\
\varphi_5((-1,\dots,-1),(1234))=    &(\lambda_1\lambda_2^{-1}\lambda_3\lambda_4^{-1})(\lambda_2\lambda_3^{-1}\lambda_4\lambda_1^{-1})(\lambda_5\lambda_5^{-1})\\
\varphi_5((-1,\dots,-1),(123)(45))= & (\lambda_1\lambda_2^{-1}\lambda_3\lambda_1^{-1}\lambda_2\lambda_3^{-1})(\lambda_4\lambda_5^{-1})(\lambda_5\lambda_4^{-1}) \\
\varphi_5((-1,\dots,-1),(123))=     & (\lambda_1\lambda_2^{-1}\lambda_3\lambda_1^{-1}\lambda_2\lambda_3^{-1})(\lambda_4\lambda_4^{-1})(\lambda_5\lambda_5^{-1}) \\
\varphi_5((-1,\dots,-1),(12)(34))=  & (\lambda_1\lambda_2^{-1})(\lambda_2\lambda_1^{-1})(\lambda_3\lambda_4^{-1})(\lambda_4\lambda_3^{-1})(\lambda_5\lambda_5^{-1})\\
\varphi_5((-1,\dots,-1),(12))=      & (\lambda_1\lambda_2^{-1})(\lambda_2\lambda_1^{-1})(\lambda_3\lambda_3^{-1})(\lambda_4\lambda_4^{-1})(\lambda_5\lambda_5^{-1}).
\end{align*}
We conclude that for the subgroup $\varphi_k(H_{k,3})\leq \text{Sym}(\{\lambda_i,\,\lambda_i^{-1}\mid i=1,\dots,5\})$ only non-trivial permutations of type $(10)$, $(4,4,2)$, $(6,2,2)$, $(2,2,2,2,2)$ and of type $(5,5)$, $(4,4,1,1)$, $(3,3,2,2)$, $(3,3,1,1,1,1)$, $(2,2,2,2,1,1)$ and $(2,2,1,1,1,1,1,1)$ can occur.

\section{Representation matrix for Dehn twist}\label{Sec:DehnTwist}
The following matrix is the representation matrix $M_\alpha^{(0)}$ for the restriction of the Dehn twist $D_\alpha$ in direction $(1,-2)$ to $H_1^{(0)}(\mathcal{O}_{N,M}^{(5)},\mathbb{Q})$ with respect to the basis $B^{(0)}$ from Section \ref{Sec:DehnTwistg=5}. 
\begin{center}
\begin{sideways}
$\begin{pmatrix}
-8-N-4m & -9-N-4m & 0 &  -9-N-4m  & -9-N-4m & -9-N-4m & 0 & -9-N-4m \\
   2    &    3    & 0 &      2    &    2    &    2    & 0 &    2    \\
   2    &    2    & 1 &      2    &    2    &    2    & 0 &    2    \\
   2    &    2    & 0 &      3    &    2    &    2    & 0 &    2    \\ 
  -4    &   -4    & 0 &     -4    &   -3    &   -4    & 0 &   -4    \\
  -4    &   -4    & 0 &     -4    &   -4    &   -3    & 0 &   -4    \\
 7+N+4m & 7+N+4m  & 0 &  7+N+4m   &  7+N+4m & 7+N+4m  & 1 &  7+N+4m \\ 
 - 4    &   -4    & 0 &     -4    &   -4    &   -4    & 0 &   -3     
\end{pmatrix}$
\end{sideways}
\end{center}

\newpage

\end{document}